\theoremstyle{plain}
\newtheorem{thm}{Theorem}[section]
\newtheorem{claim}[thm]{Claim}
\newtheorem{corollary}[thm]{Corollary}
\newtheorem{lemma}[thm]{Lemma}
\newtheorem{remark}[thm]{Remark}
\newtheorem{theorem}[thm]{Theorem}
\newtheorem{ex}[thm]{Example}
\numberwithin{equation}{section}
\newcommand{\N}{\mathbb{N}}
\newcommand{\R}{\mathbb{R}}
\begin{document}

\title[The Morse-Sard theorem revisited]{The Morse-Sard theorem revisited}

\author{D. Azagra}
\address{ICMAT (CSIC-UAM-UC3-UCM), Departamento de An{\'a}lisis Matem{\'a}tico y Matem\' atica Aplicada,
Facultad Ciencias Matem{\'a}ticas, Universidad Complutense, 28040, Madrid, Spain}
\email{azagra@mat.ucm.es}

\author{J. Ferrera}
\address{IMI, Departamento de An{\'a}lisis Matem{\'a}tico y Matem\' atica Aplicada,
Facultad Ciencias Matem{\'a}ticas, Universidad Complutense, 28040, Madrid, Spain}
\email{ferrera@mat.ucm.es}

\author{J. G\'omez-Gil}
\address{Departamento de An{\'a}lisis Matem{\'a}tico y Matem\' atica Aplicada,
Facultad Ciencias Matem{\'a}ticas, Universidad Complutense, 28040, Madrid, Spain}
\email{gomezgil@mat.ucm.es}

\date{December 18, 2015}

\keywords{Morse-Sard theorem, Taylor polynomial, Stepanov differentiability theorem, Sobolev spaces}

\thanks{The authors were partially supported by Grant MTM2012-34341. D. Azagra was partially supported by Ministerio de Educaci\'on, Cultura y Deporte, Programa Estatal de Promoci\'on del Talento y su Empleabilidad en I+D+i, Subprograma Estatal de Movilidad.}

\subjclass[2010]{58C25, 46E35, 41A80, 57R70, 28A25, 26B05, 26B35, 28A78, 26B05, 26A27, 26A16}

\begin{abstract}
Let $n, m, k$ be positive integers with $k=n-m+1$. We establish an abstract Morse-Sard-type theorem which allows us to deduce, on the one hand, a previous result of De Pascale's for Sobolev $W^{k,p}_{\textrm{loc}}(\mathbb{R}^n, \mathbb{R}^m)$ functions with $p>n$ and, on the other hand, also some new results such as the following: if $f\in C^{k-1}(\mathbb{R}^n, \mathbb{R}^m)$ satisfies $$\limsup_{h\to 0}\frac{|D^{k-1}f(x+h)-D^{k-1}f(x)|}{|h|}<\infty$$ for every $x\in\mathbb{R}^n$ (that is, $D^{k-1}f$ is a Stepanov function), then the set of critical values of $f$ is Lebesgue-null in $\mathbb{R}^m$. In the case that $m=1$ we also show that this limiting condition holding for every $x\in\R^n\setminus\mathcal{N}$, where $\mathcal{N}$ is a set of zero $(n-2+\alpha)$-dimensional Hausdorff measure for some $0<\alpha<1$, is sufficient to guarantee the same conclusion. 
\end{abstract}

\maketitle

\section{Introduction and main results}

The Morse-Sard theorem \cite{Morse, Sard} states that if $f:\R^n\to\R^m$ is of class $C^k$, where $k=n-m+1$, then the set of critical values of $f$ has measure zero in $\R^m$. A famous example of Whitney's \cite{Whitney} shows that this classical result is sharp within the classes of functions $C^j$. However, several generalizations of the Morse-Sard theorem for other classes of functions (notably H\"older and Sobolev spaces) have appeared in the literature; see 
\cite{Alberti, Bates, BoHaSt, BourKoKris1, BourKoKris2, DePascale, Dubo, Figalli, KorobkovKristensen, HajlaszZimmerman, Norton, NortonZMS, Landis, PavZaj, Putten, Rifford, Yomdin} and the references therein.

In this paper we will give a new, elementary proof of the Morse-Sard theorem which only requires $f$ to be $k-1$ times continuously differentiable, to have a Taylor expansion of order $k$ at almost every point $x\in\R^n$, and to satisfy $\limsup_{h\to 0}\frac{|f(x+h)-f(x)-Df(x)(h) - ... - \frac{1}{(k-1)!} D^{k-1}f(x)(h^{k-1})|}{|h|^k}<\infty$ for every $x\in\R^n$. We will also see that this new version of the Morse-Sard theorem is neither weaker nor stronger than the recent results of Bourgain, Korobkov and Kristensen \cite{BourKoKris2} for $W^{n,1}_{\textrm{loc}}$ and $BV_{n,\textrm{loc}}$ functions in the case $m=1$, and it can be useful in the analysis of functions $f$ of class $C^{k-1}(\R^n, \R^m)$ whose derivatives $D^{k-1}f$ behave somewhat badly near some points or regions, but nonetheless $f$ has Taylor expansions almost everywhere.
We will deduce our result of a new, abstract Morse-Sard-type theorem which will also allow us to easily recover De Pascale's theorem \cite{DePascale} for $W^{k,p}(\R^n, \R^ m)$ Sobolev functions with $p>n$. In any case, it should be observed that the results of this paper, as those of \cite{Bates, DePascale, HajlaszZimmerman} (and leaving aside the special case $n=m$), concern everywhere differentiable functions, while the results of \cite{BourKoKris1, BourKoKris2, KorobkovKristensen} are valid for Sobolev functions that need not be everywhere differentiable. Thus one can see that in this kind of problems there are mainly two issues, namely the Luzin property and the Morse-Sard property, and that while \cite{BourKoKris1, BourKoKris2, KorobkovKristensen} deal with both issues simultaneously, in this paper we will only be concerned with the second.

We will say that a function $f:\R^n\to\R^m$ has a Taylor expansion of order $k$ at a point $x$ provided there is a polynomial $P_x:\R^n\to\R^m$, of degree less than or equal to $k$, such that 
\begin{equation}\label{definition of Taylor expansion}
\lim_{h\to 0}\frac{f(x+h)-P_x(h)}{|h|^k}=0.
\end{equation}
Whenever such a $P_x$ exists, it is unique, and we will denote $\{P_x\}=J^{k}f(x)$ (read the $k$-th order jet of $f$ at $x$). Abusing notation, we will write indistinctly
$
J^{k}f(x)= (P^{1}_{x}, ..., P^{k}_{x})
$
and also
$$
(P^{1}_{x}, ..., P^{k}_{x}) \in J^{k}f(x),
$$
where $P_{x}(h)=f(x)+P^{1}_{x}(h)+ ... +P^{k}_{x}(h)$, and $P_{x}^{j}$ is the $j$-homogeneous polynomial component of $P_x$, for each $j=1, 2, ..., k$.  If there exists no such $P_x$ then we will write $J^{k}f(x)=\emptyset$.

If $f$ happens to be $k$ times differentiable at $x$ then it is known that $f$ has a Taylor expansion $P_x$ of order $k$ at $x$, and in fact we have $P^{j}_{x}(h)=\frac{1}{j!}D^{j}f(x)(h^{j})$ for every $j=1, ..., k$. However, a function $f$ may have a Taylor expansion of order $k$ at $x$ without being $k$ times differentiable at $x$ (or even two times differentiable at $x$, no matter how large $k$ is). 

The set of all functions $f:\R^n\to\R^m$ of class $C^{j}$ such that  $f$ has a Taylor expansion of order $k$ at each point $x\in\R^n$ is obviously a vector space, which we will denote by $\mathcal{C}^{j}\mathcal{P}^{k}(\R^n, \R^m)$ in this paper.

It should be noted that our definition of function admitting Taylor expansions on a set is much less demanding than other definitions appearing in the literature (compare e.g. with \cite[Chapter 3.5]{Ziemer}). Specifically, we do not require that the mappings $x\mapsto P_x$ be locally bounded, nor that the limits in \eqref{definition of Taylor expansion} be locally uniform in $x$. Consequently, functions admitting Taylor expansions of order $k$ at all points of a compact set are not, in general, restrictions of $C^k$ functions.

From one of our main results we will deduce the following.

\begin{theorem}\label{general theorem}
Let $n\geq m$ be positive integers, $k:=n-m+1$ and let $f:\R^n\to\R^m$ be such that
\begin{enumerate}
\item $f\in C^{k-1}(\R^n, \R^m)$;
\item $\limsup_{h\to 0}\frac{|f(x+h)-f(x)-Df(x)(h) - ... - \frac{1}{(k-1)!} D^{k-1}f(x)(h^{k-1})|}{|h|^k}<\infty$ for every $x\in\R^n$.
\end{enumerate}
Then $\mathcal{L}^{m}\left(f(C_f) \right)=0$, where
$C_f:=\{ x\in\R^n : \textrm{rank}\left(Df(x)\right) < m\}$.

\noindent The same statement holds true if $\R^n$ is replaced with an open subset of $\R^n$. 
\end{theorem}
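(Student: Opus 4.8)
The plan is to reduce everything to a covering argument localized near critical points, exploiting conditions (2) and (3) together. First I would fix a bounded open cube $Q$ and, for each positive integer $N$, set $A_N:=\{x\in Q\cap C_f : |f(x+h)-f(x)-Df(x)(h)-\cdots-\tfrac{1}{(k-1)!}D^{k-1}f(x)(h^{k-1})|\le N|h|^k \text{ for all } |h|\le 1/N\}$. Condition (2) guarantees $Q\cap C_f=\bigcup_N A_N$, so by countable subadditivity of $\mathcal{L}^m$ it suffices to show $\mathcal{L}^m(f(A_N))=0$ for each $N$. On $A_N$ the map $f$ is pointwise uniformly approximated to order $k$ by the Taylor polynomial built from its genuine derivatives $D^jf(x)$, $j\le k-1$, plus a degree-$k$ error controlled by $N$; this is the quantitative handle that replaces the usual $C^k$ hypothesis.

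Next I would run the classical Morse-Sard stratification on $A_N$. Write $g=D^{k-1}f$, a continuous map. Near a point $x\in A_N$ with $\mathrm{rank}(Df(x))<m$, partition a small ball $B(x,r)$ by a grid of cubes of side $r/\ell$ and estimate, using the order-$k$ Taylor control, the diameter of $f$ on each subcube: for a subcube $S$ meeting $A_N$, $\mathrm{diam}(f(S))\lesssim (r/\ell)\,\omega_S + N(r/\ell)^k$, where $\omega_S$ bounds the oscillation on $S$ of the first $k-1$ derivatives — crucially, the linear term only sees directions in the range of $Df(x)$, which has dimension $\le m-1$. The standard count then gives that $f(B(x,r)\cap A_N)$ is covered by $\lesssim \ell^n$ balls whose radii are controlled, and summing $m$-dimensional contents over the $\ell^n$ cubes yields a bound of the form $\ell^n\bigl((r/\ell)\omega + N(r/\ell)^k\bigr)^{(m-1)}\cdot\bigl(\text{rest}\bigr)$; choosing $\ell\to\infty$ and using $k=n-m+1$ so that the exponents balance (the $m-1$ range directions each contribute a full linear factor, the remaining $n-m+1=k$ directions each contribute a factor of size $(r/\ell)^k/(r/\ell)$ after redistributing the $k$-th order error) makes the total $\to 0$. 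This is exactly the Morse-Sard bookkeeping, but with the $C^k$ remainder replaced by the uniform $N|h|^k$ bound valid on $A_N$.

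The role of condition (3) is more delicate and is where I expect the main obstacle to lie. Unlike in the $C^k$ setting, the jet $x\mapsto J^kf(x)$ need not be continuous or even locally bounded, so the degree-$k$ homogeneous part $P^k_x$ cannot be used uniformly. The fix is to observe that (3) holds a.e., so after discarding a null set from each $A_N$ we may assume every $x\in A_N$ has a genuine Taylor expansion; then on the good set the true infinitesimal behavior of $f$ near $x$ is governed by $Df(x)$ (rank $<m$), and the $N|h|^k$ bound from (2) quantifies the passage from the infinitesimal statement to the finite-scale covering without needing any uniformity of $P^k_x$. One has to be careful that removing a null set from $A_N$ before covering is legitimate — it is, since $f$ restricted to a full-measure subset of $A_N$ can still have its image only enlarged by a set of measure zero if we also control $f$ on the discarded null set, and here $f$ is at least Lipschitz on bounded sets (being $C^{k-1}$ with $k\ge 1$, hence $C^0$; when $k\ge 2$, $C^1$), so images of null sets under $f$ are null. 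For $k=1$ (i.e. $m=n$) the theorem reduces to the Sard-type statement that a $C^0$ map satisfying (2)–(3) maps its critical set to a null set; here (2) says $f$ is pointwise Lipschitz, so by Stepanov's theorem $f$ is differentiable a.e., (3) is automatic, and $C_f$ has measure zero since $\det Df=0$ there, giving $\mathcal{L}^n(f(C_f))=0$ by the area formula or directly. The passage to an arbitrary open subset of $\R^n$ is immediate by exhausting it with countably many cubes. The principal difficulty, to reiterate, is making the covering estimate in the second paragraph rigorous when the $k$-jet is unbounded: this is handled entirely by working inside the sets $A_N$, where condition (2) furnishes exactly the uniform $k$-th order remainder bound that the classical proof extracts from $C^k$-smoothness.
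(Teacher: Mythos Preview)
Your covering argument cannot yield measure zero from condition (2) alone, and you misidentify the role of (3). On the top stratum $A_{k-1}=\{x: D^jf(x)=0,\ 1\le j\le k-1\}$, condition (2) gives $|f(x+h)-f(x)|\le N|h|^k$ on $A_N\cap A_{k-1}$. Covering a unit cube by $i^n$ subcubes of side $1/i$ yields total image $m$-volume $\lesssim N^m i^{n-km}$; when $m=1$ one has $km=n$, so this is $\lesssim N$ --- bounded, not zero. The classical proof succeeds because $C^k$ gives $o(|h|^k)$, i.e.\ an $\varepsilon|h|^k$ bound with $\varepsilon\to 0$; your $N$ is fixed. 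Condition (3) does \emph{not} repair this by ``discarding a null set'': at a point $x\in A_{k-1}$ where $J^kf(x)$ exists with $P^k_x\neq 0$ one still has $|f(x+h)-f(x)|\sim|P^k_x(h)|$, genuinely of exact order $|h|^k$. (Separately, your claim that Lipschitz maps $\R^n\to\R^m$ carry null sets to null sets is false for $n>m$; what is true, and what the paper uses, is that null subsets of $A_{k-1}$ have null image because of the $O(|h|^k)$ bound combined with $km\ge n$ --- this is Lemma~2.2.)

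The missing idea is the paper's trichotomy on $A_{k-1}$. Where $J^kf(x)=(0,\dots,0)$ one has the honest $o(|h|^k)$ estimate and the covering with $\varepsilon\to 0$ gives zero (Lemma~2.3). The set $B_{k-1}=\{x\in A_{k-1}: J^kf(x)=(0,\dots,0,P^k_x),\ P^k_x\neq 0\}$ is shown --- and this is the heart of the proof --- to be itself $\mathcal{L}^n$-null, by slicing along lines in a direction $v$ with $P^k_x(v)\neq 0$ and proving that for the resulting one-variable function $\varphi$ the set $\{t: J^k\varphi(t)=(0,\dots,0,a_t),\ a_t\neq 0\}$ has $\mathcal{L}^1$-measure zero (Lemma~3.2, with separate arguments for $k$ even and $k$ odd). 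Then $\mathcal{L}^m(f(B_{k-1}))=0$ follows from condition (2) via Lemma~2.2. The set where $J^kf(x)=\emptyset$ is null by (3) and handled the same way. Your sketch contains nothing resembling the $B_{k-1}$ argument. (You also do not address the intermediate strata $A_{s-1}\setminus A_s$ or the set $K=\{1\le\mathrm{rank}\,Df<m\}$; the paper handles these by the implicit function theorem and the Kneser--Glaeser rough composition theorem to reduce dimension, which your ``(rest)'' and ``bookkeeping'' placeholders do not.)
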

\noindent Here, as in the rest of the paper, $\mathcal{L}^{m}$ denotes the Lebesgue outer measure in $\R^m$.
Notice that for $n>m$ the critical set $C_{f}=\{ x\in\R^n : \textrm{rank}\left(Df(x)\right) \leq m-1\}$ is defined as usual, as $f$ must be at least of class $C^1$. In the case $n=m$, we note that condition $(2)$ reads
$\limsup_{h\to 0}\frac{|f(x+h)-f(x)|}{|h|}<\infty$ for every $x\in\R^n$, which by itself already implies, by Stepanov's differentiability theorem, that $f$ is differentiable almost everywhere, and therefore we may define the critical set of $f$ by $C_{f}=\{ x\in\R^n : Df(x) \textrm{ exists and } \textrm{rank}\left(Df(x)\right) \leq n-1\}$. In this case the above Theorem tells us that $\mathcal{L}^n(f(C_f))=0$. 

After the first version of this paper was released, by using more advanced methods we have been able to improve Theorem \ref{general theorem}, especially in the case $m=1$; see\cite[Theorem 1.6]{AFGGNonlinear2017} and the Appendix therein.

A straightforward consequence of Theorem \ref{general theorem} is the following.
\begin{theorem}\label{Stepanov meets Morse-Sard}
Assume that $f\in C^{k-1}(\R^n, \R^m)$, with $k=n-m+1\geq 1$, and that $D^{k-1}f$ satisfies
$$
\limsup_{h\to 0}\frac{\|D^{k-1}f(x+h)-D^{k-1}f(x)\|}{|h|}<\infty
$$
for every $x\in\R^n$. Then $\mathcal{L}^{m}\left( f(C_f)\right)=0$.

\noindent The same statement holds true if $\R^n$ is replaced with an open subset of $\R^n$. 
\end{theorem}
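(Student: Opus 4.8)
The plan is to derive this statement as a direct consequence of Theorem \ref{general theorem}: it suffices to verify that a function $f\in C^{k-1}(\R^n,\R^m)$ whose derivative $D^{k-1}f$ satisfies the everywhere Stepanov condition also satisfies conditions $(1)$, $(2)$ and $(3)$ of that theorem. Condition $(1)$ is part of the hypothesis. When $k=1$ (that is, $n=m$) the situation is immediate, since then $D^{k-1}f=f$, condition $(2)$ \emph{is} the Stepanov hypothesis, and Stepanov's differentiability theorem gives condition $(3)$; so from now on we may assume $k\geq 2$ and use Taylor's formula with integral remainder for the $C^{k-1}$ map $f$, namely $f(x+h)-f(x)-\sum_{j=1}^{k-2}\frac{1}{j!}D^jf(x)(h^j)=\int_0^1\frac{(1-t)^{k-2}}{(k-2)!}D^{k-1}f(x+th)(h^{k-1})\,dt$.

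For condition $(2)$, fix $x$. Subtracting $\frac{1}{(k-1)!}D^{k-1}f(x)(h^{k-1})$ from both sides and using $\int_0^1\frac{(1-t)^{k-2}}{(k-2)!}\,dt=\frac{1}{(k-1)!}$, we obtain
$$
f(x+h)-f(x)-\sum_{j=1}^{k-1}\frac{1}{j!}D^jf(x)(h^j)=\int_0^1\frac{(1-t)^{k-2}}{(k-2)!}\big(D^{k-1}f(x+th)-D^{k-1}f(x)\big)(h^{k-1})\,dt.
$$
The Stepanov hypothesis at $x$ provides constants $C_x,\delta_x>0$ with $\|D^{k-1}f(x+v)-D^{k-1}f(x)\|\le C_x|v|$ whenever $|v|\le\delta_x$; hence for $|h|\le\delta_x$ the right-hand side is bounded in norm by $C_x|h|^k\int_0^1\frac{(1-t)^{k-2}}{(k-2)!}\,t\,dt=\frac{C_x}{k!}|h|^k$, which yields $(2)$ with constant $C_x/k!$.

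For condition $(3)$, apply Stepanov's differentiability theorem to the map $x\mapsto D^{k-1}f(x)$, which takes values in the finite-dimensional space of symmetric $(k-1)$-linear forms on $\R^n$ with values in $\R^m$: since the limsup in the hypothesis is finite for every $x\in\R^n$, the map $D^{k-1}f$ is differentiable at almost every $x$. At such a point $x$ there is a linear map $L$ with $D^{k-1}f(x+v)-D^{k-1}f(x)=L(v)+o(|v|)$ as $v\to 0$. Substituting $v=th$ into the integral-remainder identity above, and using that $\|D^{k-1}f(x+th)-D^{k-1}f(x)-L(th)\|\le\varepsilon|th|\le\varepsilon|h|$ for $|h|$ small and all $t\in[0,1]$ (so the error is $o(|h|)$ uniformly in $t$), together with $\int_0^1\frac{(1-t)^{k-2}}{(k-2)!}\,t\,dt=\frac{1}{k!}$, we get
$$
f(x+h)-f(x)-\sum_{j=1}^{k-1}\frac{1}{j!}D^jf(x)(h^j)=\frac{1}{k!}L(h)(h^{k-1})+o(|h|^k)\qquad(h\to 0),
$$
so $f$ admits a Taylor expansion of order $k$ at $x$, with $P_x^k(h)=\frac{1}{k!}L(h)(h^{k-1})$. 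Thus $(3)$ holds, Theorem \ref{general theorem} applies, and $\mathcal{L}^m(f(C_f))=0$. The statement for an open subset $\Omega\subseteq\R^n$ follows in exactly the same way from the open-subset version of Theorem \ref{general theorem}. Since the substance of the argument is already contained in Theorem \ref{general theorem}, there is no real obstacle; the only point requiring a little care is the uniformity in $t\in[0,1]$ of the $o(|h|)$ estimate used in the proof of $(3)$, which is handled by the displayed inequality above.
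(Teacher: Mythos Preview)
Your proof is correct and follows precisely the route the paper indicates: deduce Theorem~\ref{Stepanov meets Morse-Sard} from Theorem~\ref{general theorem} by checking that the everywhere Stepanov condition on $D^{k-1}f$ implies conditions $(2)$ and $(3)$ there. The paper states this reduction in the paragraph preceding Theorem~\ref{Stepanov meets Morse-Sard} without spelling out the details; your use of the integral remainder together with Stepanov's differentiability theorem applied to $D^{k-1}f$ is exactly the intended verification.
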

It is clear that the above Theorem generalizes the main result of \cite{Bates}.  On the other hand, observe that if $J^k{f}(x)=(P_{x}^1, ..., P_{x}^k)$ and $f\in C^{k-1}(\R^n, \R^m)$ then $$\limsup_{h\to 0}\frac{|f(x+h)-f(x)-Df(x)(h) - ... - \frac{1}{(k-1)!} D^{k-1}f(x)(h^{k-1})|}{|h|^k}=\|P_{x}^{k}\|<\infty.$$
Thus, another consequence of Theorem \ref{general theorem} is the following.

\begin{corollary}\label{main theorem}
Let $n, m$ be positive integers with $n\geq m$, and set $k=n-m+1$. If $f\in \mathcal{C}^{k-1}\mathcal{P}^{k}(\R^n, \R^m)$ then the set of critical values of $f$ is of Lebesgue measure zero in $\R^m$. 

\noindent The same statement holds true if $\R^n$ is replaced with an open subset of $\R^n$.
\end{corollary}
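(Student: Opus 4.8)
The plan is to obtain Corollary~\ref{main theorem} as an immediate consequence of Theorem~\ref{general theorem}: it suffices to check that every $f\in\mathcal{C}^{k-1}\mathcal{P}^{k}(\R^n,\R^m)$ satisfies the three hypotheses of that theorem. Hypotheses $(1)$ and $(3)$ are built into the definition of the space $\mathcal{C}^{k-1}\mathcal{P}^{k}$: such an $f$ is of class $C^{k-1}$ and admits a Taylor expansion of order $k$ at \emph{every} point of $\R^n$, which is even stronger than asking for $(3)$ only almost everywhere.

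The one point that genuinely needs an argument is hypothesis $(2)$. First I would observe that if $f$ is of class $C^{k-1}$ and has a Taylor expansion $P_x$ of order $k$ at $x$, written in homogeneous components as $P_x(h)=f(x)+P_x^1(h)+\cdots+P_x^k(h)$, then necessarily $P_x^j(h)=\frac{1}{j!}D^jf(x)(h^j)$ for $j=0,1,\dots,k-1$. Indeed, the classical Taylor theorem gives $f(x+h)=f(x)+Df(x)(h)+\cdots+\frac{1}{(k-1)!}D^{k-1}f(x)(h^{k-1})+o(|h|^{k-1})$, while truncating $P_x$ at degree $k-1$ also yields an order-$(k-1)$ Taylor expansion of $f$ at $x$ (because $P_x^k(h)=o(|h|^{k-1})$); by uniqueness of finite-order Taylor expansions the two polynomials of degree $\leq k-1$ must agree. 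Hence
$$
f(x+h)-f(x)-Df(x)(h)-\cdots-\frac{1}{(k-1)!}D^{k-1}f(x)(h^{k-1})=P_x^k(h)+o(|h|^k),
$$
and, dividing by $|h|^k$ and using $|P_x^k(h)|\leq\|P_x^k\|\,|h|^k$, we get
$$
\limsup_{h\to 0}\frac{|f(x+h)-f(x)-Df(x)(h)-\cdots-\frac{1}{(k-1)!}D^{k-1}f(x)(h^{k-1})|}{|h|^k}=\|P_x^k\|<\infty
$$
for every $x\in\R^n$, which is exactly $(2)$. (This computation is the one already recorded in the discussion preceding the corollary.)

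With $(1)$, $(2)$ and $(3)$ verified, Theorem~\ref{general theorem} applies and gives $\mathcal{L}^m\bigl(f(C_f)\bigr)=0$; since $f(C_f)$ is precisely the set of critical values of $f$, this is the claim. The statement for an arbitrary open subset $U\subseteq\R^n$ follows in the same way from the open-subset form of Theorem~\ref{general theorem}, with $U$ in place of $\R^n$ throughout. I do not expect any real obstacle here: all the work is done inside Theorem~\ref{general theorem}, and the only step requiring a moment's care is the identification of the low-degree homogeneous parts of $P_x$ with the derivatives of $f$, which rests on the uniqueness of Taylor expansions of order $k-1$.
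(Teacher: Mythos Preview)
Your proposal is correct and matches the paper's approach exactly: the paper also deduces the corollary directly from Theorem~\ref{general theorem} by noting that for $f\in C^{k-1}$ with $J^k f(x)=(P_x^1,\dots,P_x^k)$ the $\limsup$ in condition~(2) equals $\|P_x^k\|<\infty$. Your extra justification that the lower-order homogeneous parts $P_x^j$ coincide with $\frac{1}{j!}D^jf(x)$ via uniqueness of Taylor expansions is a welcome clarification of a point the paper leaves implicit.
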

Obviously, this Corollary also implies the classical version of the Morse-Sard theorem for functions $f:\R^n\to\R^m$ of class $C^k$. 
It is natural to ask whether condition $(2)$ of Theorem \ref{general theorem} could be replaced with the same limit condition holding for a.e. $x$ instead of all $x$. The answer is negative, as can be ascertained by examining Whitney's classical example \cite{Whitney}. See nonetheless Theorems \ref{general theorem variant 1} and \ref{general theorem variant 2} below for some refinements of Theorem \ref{general theorem} in the case $m=1$, regarding this question and Norton's results from \cite{Norton}. In particular we obtain the following.
\begin{corollary}\label{corollary by Norton}
If a function $f\in C^{n-1}(\mathbb{R}^n, \mathbb{R})$ satisfies $$\limsup_{h\to 0}\frac{|D^{n-1}f(x+h)-D^{n-1}f(x)|}{|h|}<\infty$$ for every $x\in\R^n\setminus\mathcal{N}$, where $\mathcal{N}$ is a set such that $\mathcal{H}^{n-2+\alpha}(\mathcal{N})=0$ for some $0<\alpha<1$, then
$\mathcal{L}^{1}(f(C_{f}))=0$.
\end{corollary}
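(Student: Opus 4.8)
The plan is to deduce the Corollary from the refinement of Theorem~\ref{general theorem} for the case $m=1$ stated below (Theorems~\ref{general theorem variant 1} and~\ref{general theorem variant 2}); the only new point is that the hypothesis of the Corollary places $f$ in its scope, and the substance lies in that refinement. So put $m=1$, hence $k=n$. By the observation recorded just before Theorem~\ref{Stepanov meets Morse-Sard}, if $D^{n-1}f$ satisfies $\limsup_{h\to 0}|D^{n-1}f(x+h)-D^{n-1}f(x)|/|h|<\infty$ at a given point $x$, then $f$ satisfies conditions $(2)$ and $(3)$ of Theorem~\ref{general theorem} at $x$. Consequently, under the hypothesis of the Corollary, $f\in C^{n-1}(\R^n,\R)$ satisfies $(2)$ and $(3)$ at every $x\in\R^n\setminus\mathcal{N}$. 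Since $n-2+\alpha<n$, the assumption $\mathcal{H}^{n-2+\alpha}(\mathcal{N})=0$ forces $\mathcal{L}^n(\mathcal{N})=0$, so $f$ in fact has a Taylor expansion of order $n$ at almost every point of $\R^n$; thus $f$ satisfies $(1)$ everywhere, $(3)$ almost everywhere, and $(2)$ at every point outside the $\mathcal{H}^{n-2+\alpha}$-null set $\mathcal{N}$. This is precisely the hypothesis of Theorem~\ref{general theorem variant 2}, which therefore gives $\mathcal{L}^1(f(C_f))=0$.

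For completeness, here is how the exceptional set is absorbed in the proof of that refinement, i.e. how the proof of Theorem~\ref{general theorem} in the case $m=1$ is made robust under deletion of $\mathcal{N}$. Since $f\in C^{n-1}$ one has the classical Sard stratification $C_f=\Sigma_1\supseteq\Sigma_2\supseteq\dots\supseteq\Sigma_{n-1}$, where $\Sigma_j=\{x\in\R^n: D^i f(x)=0\text{ for }i=1,\dots,j\}$ and $\Sigma_1=C_f$. Near a point of $\Sigma_j\setminus\Sigma_{j+1}$ with $j\le n-2$, some partial derivative of $f$ of order $j+1$ is nonzero while a partial of order $j$ vanishes identically on $\Sigma_j$; hence $\Sigma_j$ is locally contained in a hypersurface on which $f$ is flat to order $j+1$, and this part of the argument uses only $f\in C^{n-1}$ and is unaffected by $\mathcal{N}$. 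Condition $(2)$ enters only in estimating the image of the deepest stratum $\Sigma_{n-1}$: at every $x\in\Sigma_{n-1}$ at which $f$ has a Taylor expansion of order $n$ one has $J^n f(x)=(P_x^1,\dots,P_x^n)$ with $P_x^1=\dots=P_x^{n-1}=0$, so $(2)$ controls the oscillation of $f$ on small cubes centered on such points and one obtains $\mathcal{L}^1(f(\Sigma_{n-1}\setminus\mathcal{N}))=0$ exactly as in Theorem~\ref{general theorem}. The genuinely new ingredient is the bound for $f(\mathcal{N}\cap C_f)$, where $(2)$ is no longer available: one covers $\mathcal{N}$ by cubes $Q_i$ with $\sum_i(\operatorname{diam}Q_i)^{n-2+\alpha}$ as small as wished and, using $Df\equiv 0$ on $C_f$ together with the same hypersurface reductions as above, estimates $\sum_i\operatorname{diam}f(C_f\cap Q_i)$ by a quantity of the form $\bigl(\max_i\operatorname{diam}Q_i\bigr)^{\varepsilon}\sum_i(\operatorname{diam}Q_i)^{n-2+\alpha}$ for some $\varepsilon>0$ furnished by the strict inequality $\alpha<1$; letting the cubes shrink gives $\mathcal{L}^1(f(\mathcal{N}\cap C_f))=0$. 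Adding the three contributions yields $\mathcal{L}^1(f(C_f))=0$.

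The main obstacle, and the reason the refinement is not a formality, is precisely this accounting. One must check that an efficient cover of $\mathcal{N}$ by cubes of $\R^n$ remains efficient after being pushed onto each of the hypersurfaces produced by the implicit function theorem in the course of the stratification, and that the order of flatness of $f$ on the stratum at hand --- compounded over the successive reductions --- keeps beating the covering exponent $n-2+\alpha$. It is exactly here that the hypothesis $0<\alpha<1$ is used; it cannot be relaxed to allow $\mathcal{N}$ to be, say, a rectifiable arc (or any set of positive finite $\mathcal{H}^{n-1}$-measure), as Whitney's example already shows.
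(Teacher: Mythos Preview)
Your first paragraph is correct and matches the paper's intended derivation: the Corollary follows from Theorem~\ref{general theorem variant 2} once one checks that the pointwise Stepanov condition on $D^{n-1}f$ at $x$ yields condition~(2) at $x$ (Taylor's formula with integral remainder), and that Stepanov's theorem applied on $\R^n\setminus\mathcal{N}$ together with $\mathcal{L}^n(\mathcal{N})=0$ gives condition~(3) almost everywhere. (Your phrasing ``satisfies (3) at $x$'' is awkward, since (3) is a global a.e.\ condition, but you arrive at the right statement by the end of the paragraph.)

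Your ``for completeness'' sketch of Theorem~\ref{general theorem variant 2} itself differs from the paper's route. The paper does not redo the stratification directly; it verifies (MS1)--(MS3) for the relevant class and applies Theorem~\ref{abstract theorem}, with the only nontrivial point being (MS2), for which it cites Norton's result \cite[Theorem~2]{Norton} (a $C^{j,\alpha}$ function maps $\mathcal{H}^{j+\alpha}$-null critical sets to $\mathcal{L}^1$-null sets) as a black box, using the inclusion $C^{n-1}\subset C^{n-2,\alpha}$. Your direct covering approach is viable but tangled: the Kneser--Glaeser/induction step for the strata $\Sigma_j\setminus\Sigma_{j+1}$, $j\le n-2$, uses only $f\in C^{n-1}$ and is indifferent to $\mathcal{N}$, so the genuinely new piece is $\Sigma_{n-1}\cap\mathcal{N}$, not $C_f\cap\mathcal{N}$. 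For $\Sigma_{n-1}\cap\mathcal{N}$ no ``hypersurface reduction'' is needed: since all derivatives of $f$ up to order $n-1$ vanish there and $D^{n-1}f$ is locally uniformly continuous, one has $\operatorname{diam} f(\Sigma_{n-1}\cap Q_i)\le C(\operatorname{diam} Q_i)^{n-1}$, and $n-1>n-2+\alpha$ makes the covering argument immediate. Your third paragraph, with its talk of pushing covers through successive hypersurfaces and compounding flatness, is an outline of Norton's own argument for arbitrary critical sets, which is more than is needed here and which you do not actually carry out.
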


We will deduce Theorem \ref{general theorem} from the following abstract, more powerful Morse-Sard-type result.

\begin{theorem}\label{abstract theorem}
For all positive integers $n\geq m\geq 1$, $2\leq j\leq n$, and open sets $U$ and $V$ in $\R^n$ and $\R^m$ respectively, let $\mathcal{C}_{n,m}^{j}(U,V)$ be classes of mappings $f:U\to V$ such that:
\begin{enumerate}
\item[{(MS1)}] $C^{j}(U, V) \subset\mathcal{C}_{n,m}^{j}(U,V)\subset C^{j-1}(U, V)$;
\item [{(MS2)}] if $n\geq m+1$, $f\in\mathcal{C}_{n,m}^{n-m+1}(U,V)$, $A$ is a subset of $\{x: D^{i}f(x)=0 \textrm{ for } i=1, ..., n-m\}$ and $\mathcal{L}^n(A)=0$, then $\mathcal{L}^{m}(f(A))=0$;
\item[{(MS3)}] if $n\geq m+1$ then every $f\in\mathcal{C}_{n,m}^{n-m+1}(U,V)$ has a Taylor expansion of order $k=n-m+1$ at almost every $x\in\R^n$;
\item[{(MS4)}] if $f\in \mathcal{C}_{n,n}^{j}(U,V)$ and $f$ maps $U$ diffeomorphically onto $V$ then $f^{-1}\in \mathcal{C}_{n,n}^{j}(V,U)$;
\item[{(MS5)}] if $f\in \mathcal{C}_{n,m}^{j}(U,V)$, $g\in \mathcal{C}_{n,n}^{j}(W,U)$, and either $f$ is of class $C^{j}$ or $g$ is a diffeomorphism, then $f\circ g\in \mathcal{C}_{n,m}^{j}(W,V)$;
\item[{(MS6)}] if $g\in \mathcal{C}_{n,m}^{j}(U,V)$, $1\leq i\leq m-1$, then $g_y\in \mathcal{C}_{n-i,m}^{j}(U_y, \R^m)$ for $\mathcal{L}^{i}$-a.e. $y\in\R^{i}$ with $U_y:=\{z\in\R^{n-i} : (y,z)\in U\}\neq\emptyset$, where $g_{y}:U_y\to\R^m$ is defined by $g_{y}(z)=g(y,z)$;  
\item[{(MS7)}] if $f\in\mathcal{C}_{n,i}^{j}(U,V)$ and $g\in\mathcal{C}_{n,s}^{j}(U,V')$ with $i+s=m$, then $\varphi\in\mathcal{C}_{n,m}^{j}(U,V\times V')$, where $\varphi$ is defined by $\varphi(x)=(f(x),g(x))$.
\end{enumerate}
Assume that $n\geq m+1$. Then $\mathcal{L}^{m}(f(C_f))=0$ for every $f\in \mathcal{C}_{n,m}^{n-m+1}(U,V)$.

Moreover, in the special case $m=1$, the classes $C^{j}_{n,1}(U,V)$ may be defined only for $j=n$, and the conditions (MS1) with $j=n$, (MS2) and (MS3) alone are sufficient to ensure that $\mathcal{L}^{1}(f(C_f))=0$ for every $f\in \mathcal{C}_{n,1}^{n}(U,V)$. 
\end{theorem}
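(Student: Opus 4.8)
\medskip

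\emph{Overall strategy.} The plan is to run the classical Sard induction, but to descend in dimension only through operations under which the classes $\mathcal{C}^{j}_{n,m}$ are stable --- straightening by a diffeomorphism and slicing by coordinate hyperplanes --- and to replace each analytic step of the classical proof by an appeal to one of (MS1)--(MS7). I would induct on $m$. Throughout, (MS1) gives $f\in\mathcal{C}^{n-m+1}_{n,m}(U,V)\subset C^{n-m}(U,V)$, so $Df,\dots,D^{n-m}f$ and hence $C_{f}=\{\operatorname{rank}Df<m\}$ make classical sense, and (MS3) provides an $\mathcal{L}^{n}$-null set $Z$ off which $f$ has a Taylor expansion $f(x)+P^{1}_{x}+\dots+P^{n-m+1}_{x}$ with $P^{i}_{x}=\tfrac{1}{i!}D^{i}f(x)$ for $i\le n-m$.

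\emph{Base case $m=1$.} Here $C_{f}=\{Df=0\}$; I would stratify $C_{f}=A_{1}\supseteq\dots\supseteq A_{n-1}$ by $A_{i}=\{D^{1}f=\dots=D^{i}f=0\}$. For $1\le i\le n-2$ the stratum $A_{i}\setminus A_{i+1}$ lies locally in a $C^{n-1-i}$ (hence $C^{1}$) submanifold $M$, of codimension $\ge 1$, cut out by some of the $i$-th partial derivatives of $f$ (which vanish on $A_{i}$ but have independent gradients there); since all partials of $f$ up to order $i$ vanish on $A_{i}$, $f$ is flat to order $i$ there, and one checks that $f|_{M}$ is flat to one order higher on $A_{i}\cap M$, so that a grid covering of $A_{i}\cap M$ has $\mathcal{L}^{1}$-negligible image once the dimension of $M$ is small enough, and otherwise one recurses inside $M$ (terminating, since the ambient dimension is finite). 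The top stratum is split as $A_{n-1}=(A_{n-1}\cap Z)\cup(A_{n-1}\setminus Z)$: the first set is $\mathcal{L}^{n}$-null and contained in $\{D^{i}f=0:1\le i\le n-m\}$, so (MS2) makes its image null; on the second every point carries a Taylor polynomial $f(x)+P^{n}_{x}$, and the same covering scheme applies after a Lusin-type decomposition making the remainder uniform. This part (which recovers Norton's theorem) is the technical heart; it uses no operation on the range, which is why for $m=1$ only (MS2) and (MS3) are needed and (MS4)--(MS7) play no role.

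\emph{Inductive step $m\ge2$.} Write $C_{f}=\bigcup_{r=0}^{m-1}R_{r}$, $R_{r}=\{\operatorname{rank}Df=r\}\cap C_{f}$. For $r\ge1$: near a point of $R_{r}$ some $r\times r$ minor of $Df$ is invertible, so after permuting coordinates $\Phi=(f_{1},\dots,f_{r},x_{r+1},\dots,x_{n})$ is a local diffeomorphism; it belongs to $\mathcal{C}^{n-m+1}_{n,n}$ by (MS1) and (MS7) (the classes being closed under passing to subtuples of components), hence $\Phi^{-1}\in\mathcal{C}^{n-m+1}_{n,n}$ by (MS4) and $f\circ\Phi^{-1}\in\mathcal{C}^{n-m+1}_{n,m}$ by (MS5). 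Replacing $f$ by $f\circ\Phi^{-1}$ we may assume $f=(x_{1},\dots,x_{r},h)$ with $h\colon U\to\R^{m-r}$; then a point is critical for $f$ exactly when the slice $h_{y}(z):=h(y,z)$, $z\in\R^{n-r}$, has vanishing differential there, and by (MS6) with $i=r\le m-1$ one gets $h_{y}\in\mathcal{C}^{(n-r)-(m-r)+1}_{n-r,m-r}$ for $\mathcal{L}^{r}$-a.e.\ $y$. Fubini then reduces $\mathcal{L}^{m}(f(R_{r}))=0$ to $\mathcal{L}^{m-r}(h_{y}(C_{h_{y}}))=0$ for a.e.\ $y$, i.e.\ to the inductive hypothesis if $m-r\ge2$ and to the base case if $m-r=1$ (note $n-r=n-m+1\ge2$ as $n\ge m+1$). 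The piece $R_{0}=\{Df=0\}$ is disposed of directly as in the base case, with range $\R^{m}$: a grid covering suffices if $2m>n$, and otherwise one sub-stratifies $\{Df=0\}$ by the ranks of $D^{2}f$ and of the higher Taylor coefficients, combining (MS2), (MS3) and covering just as for $m=1$.

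\emph{The main difficulty.} The verifications in the rank-$\ge1$ reductions are mechanical once (MS4)--(MS7) are at hand; the genuinely delicate point is the degenerate strata --- where $Df$, or some higher derivative, has low rank --- because there the submanifolds one naturally restricts $f$ to are only as smooth as those derivatives, hence too rough to be fed back into the abstract classes. These strata must be treated ``by hand'', through iterated flatness-and-covering arguments exploiting the vanishing of the low-order derivatives, with (MS2) invoked only to dispose of the $\mathcal{L}^{n}$-null set of points lacking a Taylor expansion and (MS3) to supply Taylor expansions elsewhere.
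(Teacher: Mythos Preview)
Your reduction of the positive-rank strata via (MS4)--(MS7) matches the paper's First Step, but your treatment of the top stratum $A_{k-1}$ (with $k=n-m+1$; this is $A_{n-1}$ when $m=1$) has a genuine gap. You split $A_{k-1}=(A_{k-1}\cap Z)\cup(A_{k-1}\setminus Z)$ and, on the second piece where a Taylor polynomial $f(x)+P^{k}_{x}(h)$ exists, you propose to apply ``the same covering scheme after a Lusin-type decomposition making the remainder uniform.'' But that covering scheme only gives $\mathcal{L}^{m}(f(S))=0$ when $P^{k}_{x}\equiv 0$ on $S$: with merely $|f(x+h)-f(x)|\le C|h|^{k}$ uniformly, covering by cubes of side $\delta$ yields image diameter $\le C\delta^{k}$, and since $km=n$ the sum over $\sim\delta^{-n}$ cubes is bounded by $C\cdot\mathrm{vol}(S)$, not by anything tending to zero. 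The set $A_{k-1}\setminus Z$ can have positive measure (think of $f$ vanishing on a fat Cantor set), so this does not suffice. The missing step---which the paper explicitly identifies as the key point---is to prove that the set $B_{k-1}:=\{x\in A_{k-1}: J^{k}f(x)=(0,\dots,0,P^{k}_{x}),\ P^{k}_{x}\neq 0\}$ itself has $\mathcal{L}^{n}$-measure zero (Lemma~3.2). This is done by a one-variable argument: along a line in a direction $v$ with $\langle P^{k}_{x}(v),w\rangle\neq 0$, the function $\varphi(t)=\langle f(y+tv),w\rangle$ has jet $(0,\dots,0,a_{t})$ with $a_{t}\neq 0$, and one shows such $t$ form an $\mathcal{L}^{1}$-null set (countable when $k$ is even; via a Cantor-type gap lemma when $k$ is odd); Fubini then gives $\mathcal{L}^{n}(B_{k-1})=0$, after which (MS2) applies. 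Only the remaining piece $C_{k-1}=\{x\in A_{k-1}:(0,\dots,0)\in J^{k}f(x)\}$ is handled by the covering argument you describe.

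There is a second, smaller gap in the intermediate strata $A_{i}\setminus A_{i+1}$. You propose to ``recurse inside $M$,'' but as you yourself note, $M$ is only $C^{k-1-i}$, so $f$ composed with a chart of $M$ is too rough to lie in any class $\mathcal{C}^{j}_{n-1,m}$, and a direct flatness-and-covering argument fails for the same dimensional reason as above (flatness of order $i<k-1$ gives $|f(x+h)-f(x)|=O(|h|^{i+1})$, which is not enough). The paper resolves this via the Kneser--Glaeser Rough Composition Theorem: since $f\in C^{k-1}$ is $(s-1)$-flat on $A_{s-1}$ and the chart $g:W\subset\R^{n-1}\to\R^{n}$ is $C^{k-s}$, there is a genuinely $C^{k-1}$ function $F:W\to\R^{m}$ agreeing with $f\circ g$ on $g^{-1}(A_{s-1})$ and flat there; then $F\in C^{k-1}\subset\mathcal{C}^{k-1}_{n-1,m}$ by (MS1), and the induction hypothesis (on $n$, not on $m$) applies directly.
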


Another consequence of the above result is the following theorem of De Pascale \cite{DePascale} for Sobolev spaces (see also \cite{Figalli} for a simpler proof).

\begin{theorem}[De Pascale, 2001]\label{De Pascale}
Let $n, m, k$ be positive integers with $n\geq m$, $k=n-m+1$, and let $p$ be a real number with $p>n$. Then
$\mathcal{L}^{m}(f(C_f))=0$
for every $f$ in the Sobolev space $W^{k,p}_{\textrm{loc}}(\R^n, \R^m)$. 
\end{theorem}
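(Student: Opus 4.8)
The plan is to derive Theorem \ref{De Pascale} from the abstract Theorem \ref{abstract theorem} by taking
$$\mathcal{C}_{n',m'}^{j}(U',V')\ :=\ \{\, f\in W^{j,p}_{\mathrm{loc}}(U',\R^{m'})\ :\ f(U')\subseteq V'\,\},$$
each Sobolev map being identified with its continuous representative and $p>n$ being the fixed exponent of the statement. Every dimension $n'$ occurring in the axioms satisfies $n'\le n<p$, so Morrey's embedding $W^{j,p}_{\mathrm{loc}}\hookrightarrow C^{j-1,\,1-n'/p}_{\mathrm{loc}}$ is available in all the dimensions we need; in particular $W^{k,p}_{\mathrm{loc}}\subseteq C^{k-1,\,1-n/p}_{\mathrm{loc}}$ for $k=n-m+1$. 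Once (MS1)--(MS7) are checked, Theorem \ref{abstract theorem} gives $\mathcal{L}^{m}(f(C_f))=0$ for every $f\in W^{k,p}_{\mathrm{loc}}(U,\R^m)$ whenever $n\ge m+1$. The remaining case $n=m$, $k=1$, is not covered by Theorem \ref{abstract theorem}; but there $f\in W^{1,p}_{\mathrm{loc}}$ with $p>n$ is differentiable a.e. with $Df\in L^{p}_{\mathrm{loc}}$, so the conclusion follows from Proposition \ref{case n=m} (alternatively from the area formula, valid because such an $f$ satisfies Lusin's condition (N), whence $\mathcal{L}^{n}(f(C_f))\le\int_{C_f}|\det Df|\,dx=0$).

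Most of the axioms are soft. (MS1) is Morrey's embedding together with the fact that a $C^{j}$ map has locally bounded derivatives of orders $\le j$, hence lies in $W^{j,p}_{\mathrm{loc}}$. (MS3) holds because $D^{j-1}f\in W^{1,p}_{\mathrm{loc}}$ with $p>n$ is classically differentiable a.e., and at any point where $D^{k-1}f$ is differentiable the polynomial obtained by adjoining to the $(k-1)$-st Taylor polynomial the $k$-homogeneous term $\tfrac{1}{k!}D(D^{k-1}f)(x)(h^{k})$ is a genuine Taylor expansion of order $k$ for $f$. (MS7) is immediate. (MS6) is the standard Fubini/ACL fact: applying Tonelli's theorem to $\int_{U}\sum_{|\gamma|\le j}\|D^{\gamma}g\|^{p}$ on compact sets shows that for $\mathcal{L}^{i}$-a.e. $y$ all derivatives of $g_{y}$ in the $z$-variables up to order $j$ are $L^{p}_{\mathrm{loc}}$, i.e. $g_{y}\in W^{j,p}_{\mathrm{loc}}(U_y,\R^m)$. (MS4) and (MS5) follow from the Fa\`a di Bruno and inverse-function formulas together with the stability of $L^{p}_{\mathrm{loc}}$ under the relevant change of variables: the derivatives of order $j$ of $f\circ g$ and $g^{-1}$ are sums of products in which the only factor that need not be continuous is a $j$-th derivative of $f$ or $g$ composed with $g^{-1}$ (resp. $g$), and since $g$ is a $C^{j-1}$ diffeomorphism, hence locally bi-Lipschitz with Jacobian bounded away from $0$ and $\infty$, that factor stays $L^{p}_{\mathrm{loc}}$.

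The one substantive verification is (MS2). Let $f\in W^{k,p}_{\mathrm{loc}}(U,\R^m)$, $k=n-m+1$, and $A\subseteq\{x:D^{i}f(x)=0,\ i=1,\dots,k-1\}$ with $\mathcal{L}^{n}(A)=0$; we must show $\mathcal{L}^{m}(f(A))=0$ (localizing to a large ball, we may assume $A$ bounded). Fix $\varepsilon>0$; by absolute continuity of the integral and $\mathcal{L}^{n}(A)=0$ choose a bounded open $\Omega$ with $A\subseteq\Omega$, $\overline{\Omega}\subset U$, $\mathcal{L}^{n}(\Omega)\le1$ and $\int_{\Omega}\|D^{k}f\|^{p}<\varepsilon$. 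For $x\in A$ and $y$ close to $x$, iterating Taylor's formula and using $D^{i}f(x)=0$ for $i\le k-1$ gives $|f(y)-f(x)|\le C|y-x|^{k-1}\sup_{z\in[x,y]}\|D^{k-1}f(z)-D^{k-1}f(x)\|$, and Morrey's two-point estimate applied to $D^{k-1}f\in W^{1,p}_{\mathrm{loc}}$ bounds that supremum by $C|y-x|^{1-n/p}\bigl(\int_{B(x,2|y-x|)}\|D^{k}f\|^{p}\bigr)^{1/p}$; hence $\operatorname{diam}f(B(x,r))\le C\,r^{\,k-n/p}\bigl(\int_{B(x,2r)}\|D^{k}f\|^{p}\bigr)^{1/p}$ for small $r$. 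Cover $A$ by a Besicovitch family $\{B(x_{i},r_{i})\}$ with $x_{i}\in A$, $r_{i}\le1$, $B(x_{i},2r_{i})\subseteq\Omega$, and overlap bounded by $N=N(n)$; then
$$\mathcal{L}^{m}(f(A))\ \le\ \sum_{i}c_{m}\bigl(\operatorname{diam}f(B(x_{i},r_{i}))\bigr)^{m}\ \le\ C\sum_{i}r_{i}^{\,m(k-n/p)}\Bigl(\int_{B(x_{i},2r_{i})}\|D^{k}f\|^{p}\Bigr)^{m/p}.$$
Applying H\"older's inequality to this sum with conjugate exponents $\tfrac{p}{p-m}$ and $\tfrac{p}{m}$, the second factor is $\le\bigl(N\!\int_{\Omega}\|D^{k}f\|^{p}\bigr)^{m/p}<(N\varepsilon)^{m/p}$, while the first is $\bigl(\sum_{i}r_{i}^{\,m(k-n/p)p/(p-m)}\bigr)^{(p-m)/p}$; an elementary computation shows $m(k-n/p)\tfrac{p}{p-m}\ge n$ precisely because $mk\ge n$, i.e. $(m-1)(n-m)\ge0$, so (as $r_{i}\le1$) this sum is $\le\sum_{i}r_{i}^{n}\le C_{n}N\,\mathcal{L}^{n}(\Omega)\le C_{n}N$. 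Hence $\mathcal{L}^{m}(f(A))\le C\varepsilon^{m/p}$, and letting $\varepsilon\to0$ gives $\mathcal{L}^{m}(f(A))=0$.

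With (MS1)--(MS7) in hand, Theorem \ref{abstract theorem} together with the case $n=m$ above proves Theorem \ref{De Pascale}. I expect (MS2) to be the only real obstacle: one must force the covering sum to converge uniformly in $\varepsilon$ for every admissible $(n,m,k)$, including the borderline case $m=1$ where the inequality $(m-1)(n-m)\ge0$ degenerates to an equality, and the H\"older splitting above — which is also exactly where the strict inequality $p>n$ is used — is what makes this work. The remaining axioms are routine consequences of Morrey's embedding, Tonelli's theorem, and the chain rule for Sobolev functions.
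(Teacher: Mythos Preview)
Your approach matches the paper's: both take $\mathcal{C}^{j}_{n',m'}:=W^{j,p}_{\mathrm{loc}}$, dispatch (MS1) and (MS3)--(MS7) by Morrey's embedding and standard Sobolev calculus, handle $n=m$ separately, and identify (MS2) as the only nontrivial verification --- the paper defers to Figalli's estimate $|f(y)-f(x)|^{m}\le C\int_{B(x,r)}(1+|D^{k}f|^{p})$ (Taylor + Morrey + Young), while your H\"older splitting of the covering sum is an equivalent variant of the same idea.

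One small slip in your (MS2) argument: Besicovitch bounds the overlap of the balls $B(x_i,r_i)$, but \emph{not} of the doubled balls $B(x_i,2r_i)$ (disjoint balls with widely varying radii can have doubled versions overlapping at a single point infinitely often), so the step $\sum_i\int_{B(x_i,2r_i)}\|D^{k}f\|^{p}\le N\int_{\Omega}\|D^{k}f\|^{p}$ is not justified as written. The fix is immediate: use the oscillation form of Morrey's lemma, $\sup_{z\in B(x,r)}\|D^{k-1}f(z)-D^{k-1}f(x)\|\le C\,r^{1-n/p}\|D^{k}f\|_{L^{p}(B(x,r))}$, which integrates over $B(x,r)$ itself; then bounded overlap of the original Besicovitch balls suffices and the rest of your computation goes through unchanged.
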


We will also establish the following Dubovitski\v{\i}-Sard-type versions of Theorems \ref{abstract theorem} and \ref{general theorem}.
 
\begin{theorem}\label{abstract D-S theorem}

For all positive integers $n\geq m\geq 1$, $2\leq j\leq n$, and open sets $U$ and $V$ in $\R^n$ and $\R^m$ respectively, let $\mathcal{C}_{n,m}^{j}(U,V)$ be classes of mappings $f:U\to V$ such that:
\begin{enumerate}
\item[{(DS1)}] $C^{j}(U, V) \subset\mathcal{C}_{n,m}^{j}(U,V)\subset C^{j-1}(U, V)$;
\item [{(DS2)}] if $n\geq m+1$, $2\leq k\leq n-m+1$, $\ell=n-m-k+1$, $f\in\mathcal{C}_{n,m}^{k}(U,V)$, $A$ is a subset of $\{x: D^{j}f(x)=0 \textrm{ for } j=1, ..., k-1\}$ and $\mathcal{L}^n(A)=0$, then $\mathcal{H}^{\ell}(A\cap f^{-1}(y))=0$ for a.e. $y\in\R^m$;
\item[{(DS3)}] if $n\geq m+1$ and $2\leq k\leq n-m+1$ then every $f\in\mathcal{C}_{n,m}^{k}(U,V)$ has a Taylor expansion of order $k$ at almost every $x\in\R^n$;
\item[{(DS4)}] if $f\in \mathcal{C}_{n,n}^{j}(U,V)$ and $f$ maps $U$ diffeomorphically onto $V$ then $f^{-1}\in \mathcal{C}_{n,n}^{j}(V,U)$;
\item[{(DS5)}] if $f\in \mathcal{C}_{n,m}^{j}(U,V)$, $g\in \mathcal{C}_{n,n}^{j}(W,U)$, and either $f$ is of class $C^{j}$ or $g$ is a diffeomorphism, then $f\circ g\in \mathcal{C}_{n,m}^{j}(W,V)$;
\item[{(DS6)}] if $g\in \mathcal{C}_{n,m}^{j}(U,V)$, $1\leq i\leq m-1$, then $g_y\in \mathcal{C}_{n-i,m}^{j}(U_y, \R^m)$ for $\mathcal{L}^{i}$-a.e. $y\in\R^{i}$ with $U_y:=\{z\in\R^{n-i} : (y,z)\in U\}\neq\emptyset$, where $g_{y}:U_y\to\R^m$ is defined by $g_{y}(z)=g(y,z)$;  
\item[{(DS7)}] if $f\in\mathcal{C}_{n,i}^{j}(U,V)$ and $g\in\mathcal{C}_{n,s}^{j}(U,V')$ with $i+s=m$, then $\varphi\in\mathcal{C}_{n,m}^{j}(U,V\times V')$, where $\varphi$ is defined by $\varphi(x)=(f(x),g(x))$.
\end{enumerate}
Assume that $n\geq m+1$, $2\leq k\leq n-m+1$, and $\ell=n-m-k+1$. Then, for every $f\in \mathcal{C}_{n,m}^{k}(U,V)$ we have
$$
\mathcal{H}^{\ell}(C_{f}\cap f^{-1}(y))=0 \textrm{ for a.e. } y\in\R^m.
$$
Moreover, in the special case $m=1$, the conditions (DS1)-(DS3) alone are sufficient to ensure that, if $2\leq k \leq n-m+1$ and $\ell=n-m-k+1$, then for every $f\in \mathcal{C}_{n,1}^{n}(U,V)$ one has $\mathcal{H}^{\ell}(C_{f}\cap f^{-1}(y))=0$ for a.e. $y\in\R$.
\end{theorem}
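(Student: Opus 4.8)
I would prove Theorem~\ref{abstract D-S theorem} as a refinement of Theorem~\ref{abstract theorem}, tracking Hausdorff measures of fibers in place of Lebesgue measures of images. The first thing to notice is that Theorem~\ref{abstract D-S theorem} \emph{contains} Theorem~\ref{abstract theorem}: when $k=n-m+1$ one has $\ell=0$, and ``$\mathcal{H}^{0}(C_{f}\cap f^{-1}(y))=0$ for a.e.\ $y$'' means exactly $C_{f}\cap f^{-1}(y)=\emptyset$ for a.e.\ $y$, i.e.\ $\mathcal{L}^{m}(f(C_{f}))=0$; also, specialized to $k=n-m+1$, the axioms (DS1)--(DS7) become (MS1)--(MS7). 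Hence all instances of the statement with $\ell=0$ are already Theorem~\ref{abstract theorem} and may be used freely. The plan is to induct on $n$, taking the instances with $\ell=0$ as base cases, and, for $\ell\ge1$, to run the very decomposition of $C_{f}$ from the proof of Theorem~\ref{abstract theorem}, propagating the exponent $\ell=n-m-k+1$. I will use two trivial facts: by (DS1), $\mathcal{C}^{k}_{n,m}(U,V)\subseteq C^{k-1}(U,V)\subseteq C^{k-i}(U,V)\subseteq\mathcal{C}^{k-i}_{n,m}(U,V)$ for $1\le i\le k-1$, so the index of the abstract class can always be lowered when applying (DS4)--(DS6); and $\mathcal{H}^{t}$-negligibility implies $\mathcal{H}^{s}$-negligibility for every $s\ge t$, so it is enough to reach the target exponent from below.

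For the case $m=1$ there is nothing new to do: (DS1)--(DS3) with $j=n$ are (MS1)--(MS3) for $m=1$, so Theorem~\ref{abstract theorem} gives $\mathcal{L}^{1}(f(C_{f}))=0$; thus $C_{f}\cap f^{-1}(y)=\emptyset$, hence $\mathcal{H}^{\ell}(C_{f}\cap f^{-1}(y))=0$, for a.e.\ $y\in\R$ and every admissible $\ell$. This settles the last assertion of the theorem and also handles the base case whenever a reduction step lowers the target dimension to $1$.

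For $m\ge2$ I would first dispose of the ``flat'' stratum $\Gamma:=\{x\in U: D^{j}f(x)=0,\ j=1,\dots,k-1\}\subseteq C_{f}$. By (DS3) there is a set $Z$ with $\mathcal{L}^{n}(Z)=0$ off which $f$ has a Taylor expansion of order $k$; as $\Gamma\cap Z\subseteq\{D^{j}f=0,\ j\le k-1\}$ is $\mathcal{L}^{n}$-null, (DS2) gives $\mathcal{H}^{\ell}\bigl((\Gamma\cap Z)\cap f^{-1}(y)\bigr)=0$ for a.e.\ $y$. On $\Gamma\setminus Z$, since $f\in C^{k-1}$ and all jet components of degree $\le k-1$ vanish, the expansion reads $f(x+h)=f(x)+P^{k}_{x}(h)+o(|h|^{k})$, so a standard exhaustion argument writes $\Gamma\setminus Z$ as a countable union of bounded Borel sets $E_{N}$ on each of which $|f(x)-f(x')|\le C_{N}|x-x'|^{k}$. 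For such a set, covering $E_{N}$ by balls of radius $r$, using that each image has $\mathcal{L}^{m}$-measure $\le C\,r^{mk}$, and integrating the Hausdorff pre-measures over the target yields
$$
\int_{\R^{m}}^{\ast}\mathcal{H}^{s}_{2r}\bigl(E_{N}\cap f^{-1}(y)\bigr)\,dy\ \le\ C'\,r^{\,s+mk-n},
$$
so $\mathcal{H}^{s}(E_{N}\cap f^{-1}(y))=0$ for a.e.\ $y$ whenever $s>n-mk$. Since $\ell-(n-mk)=(m-1)(k-1)\ge1>0$ for $m,k\ge2$, the value $s=\ell$ is allowed; summing over $N$ and adding the contribution of $\Gamma\cap Z$ gives $\mathcal{H}^{\ell}(\Gamma\cap f^{-1}(y))=0$ for a.e.\ $y$. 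This Eilenberg-type coarea inequality, in place of the elementary covering estimate used for the flat stratum of Theorem~\ref{abstract theorem}, is the only genuinely new ingredient.

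Finally, the non-flat part $C_{f}\setminus\Gamma$, stratified by the least $i$ with $D^{i}f(x)\ne0$, I would handle exactly as in the proof of Theorem~\ref{abstract theorem}. On each stratum $D^{j}f$ vanishes for $j<i$ but some $i$-th partial does not; writing that partial as $\partial_{\mu}(D^{\beta}f)$ with $|\beta|=i-1$ one gets locally a partial derivative $w=D^{\beta}f\in C^{k-i}$ that vanishes on the stratum with $\nabla w\ne0$, straightens $\{w=0\}$ by a $C^{k-i}$ diffeomorphism, applies (DS5) (with index $k-i$), (DS6) to slice off the transverse variable, and (DS4), (DS7) to keep the pieces admissible; for $i=1$ this is combined with the rank reduction $(n,m)\mapsto(n-1,m-1)$, under which $\ell$ does not change. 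Feeding the lower-dimensional maps into the induction hypothesis and using Fubini in the straightened directions gives $\mathcal{H}^{\ell}\bigl((C_{f}\setminus\Gamma)\cap f^{-1}(y)\bigr)=0$ for a.e.\ $y$, which together with the estimate on $\Gamma$ completes the induction. The hard part is exactly this reduction: one has to realize the restriction of $f$ to a merely $C^{k-i}$ level hypersurface as an admissible member of the correct abstract class, and track how the triple (domain dimension, target dimension, differentiability index)---and with it the exponent $\ell$---transforms under each reduction, checking that $\ell$ is always attained. The fact that the slack $(m-1)(k-1)$ degenerates to $0$ precisely when $m=1$ is what makes it necessary to route the base case $m=1$ through Theorem~\ref{abstract theorem} rather than through the coarea estimate.
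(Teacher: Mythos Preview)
Your treatment of the flat stratum $\Gamma=A_{k-1}$ is a genuine simplification over the paper. The paper splits $A_{k-1}$ into $C_{k-1}=\{x:(0,\dots,0)\in J^{k}f(x)\}$ and $B_{k-1}=A_{k-1}\setminus C_{k-1}$; for $C_{k-1}$ it uses the $o(|h|^{k})$ estimate with an arbitrary $\varepsilon$ so that the coarea bound is $\le C\varepsilon^{m}$, and for $B_{k-1}$ it invokes the delicate one-variable analysis of Lemma~\ref{Bn-1 is null} to get $\mathcal{L}^{n}(B_{k-1})=0$ and then (DS2). Your split into $\Gamma\cap Z$ (use (DS2)) and $\Gamma\setminus Z$ (use the fixed-constant $O(|h|^{k})$ estimate plus the slack $\ell+km-n=(m-1)(k-1)\ge 1$) bypasses Lemma~\ref{Bn-1 is null} entirely. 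That works, and it is neater for $m,k\ge 2$.

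However, your reduction on the intermediate strata $A_{i-1}\setminus A_{i}$ for $i\ge 2$ is not what the paper does, and as you describe it, it does not preserve the exponent $\ell$. Straightening the $C^{k-i}$ hypersurface $\{w=0\}$ and composing via (DS5) at index $k-i$ (even granting that you can restrict to the specific slice, which (DS6) alone does not give) puts you in $\mathcal{C}^{k-i}_{n-1,m}$, and the induction hypothesis there yields
\[
\ell'=(n-1)-m-(k-i)+1=\ell+(i-1)>\ell,
\]
so you only obtain $\mathcal{H}^{\ell+i-1}$-nullity of the fibers, not $\mathcal{H}^{\ell}$-nullity. (For $i=k-1$ you also hit index $1$, which is outside the range of the axioms.) The paper's Step~3 does \emph{not} use (DS5)/(DS6) here: it uses the Kneser--Glaeser Rough Composition Theorem (Theorem~\ref{Kneser Glaeser}), exploiting the $(i-1)$-flatness of $f$ on $A_{i-1}$ to produce a genuine $C^{k-1}$ function $F$ on $W\subset\R^{n-1}$ with $F=f\circ g$ on $A^{*}$ and $DF=0$ on $A^{*}$. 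Then $F\in C^{k-1}\subset\mathcal{C}^{k-1}_{n-1,m}$ by (DS1), the induction gives $\mathcal{H}^{\ell}(C_{F}\cap F^{-1}(y))=0$ for a.e.\ $y$ with $\ell'=(n-1)-m-(k-1)+1=\ell$, and since $g$ is locally Lipschitz one pushes this forward to $\mathcal{H}^{\ell}(A_{i-1}\cap V\cap f^{-1}(y))=0$. The recovery of the lost $i-1$ derivatives via Kneser--Glaeser is precisely the mechanism that keeps $\ell$ fixed; your sketch omits it.
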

\noindent Here, as in the rest of the paper, $\mathcal{H}^{\ell}$ denotes the $\ell$-dimensional outer Hausdorff measure in $\R^n$. 
\begin{theorem}\label{general D-S theorem}
Let $n\geq m$ be positive integers, $1\leq k\leq n-m+1$, and let $f:\R^n\to\R^m$ be such that
\begin{enumerate}
\item $f\in C^{k-1}(\R^n, \R^m)$;
\item $\limsup_{h\to 0}\frac{|f(x+h)-f(x)-Df(x)(h) - ... - \frac{1}{(k-1)!} D^{k-1}f(x)(h^{k-1})|}{|h|^k}<\infty$ for every $x\in\R^n$.
\end{enumerate}
Then, for $\ell=n-m-k+1$, we have that
$\mathcal{H}^{\ell}(C_{f}\cap f^{-1}(y))=0$ for a.e. $y\in\R^m$.
\end{theorem}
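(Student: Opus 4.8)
The plan is to obtain Theorem~\ref{general D-S theorem} from the abstract result Theorem~\ref{abstract D-S theorem}, exactly as Theorem~\ref{general theorem} is obtained from Theorem~\ref{abstract theorem}. For open sets $U\subseteq\R^n$, $V\subseteq\R^m$ and integers $n\ge m\ge1$, $2\le j\le n$, I would define $\mathcal{C}_{n,m}^{j}(U,V)$ to be the set of all maps $f:U\to V$ such that (i) $f\in C^{j-1}(U,V)$; (ii) $\limsup_{h\to0}|f(x+h)-\sum_{i=0}^{j-1}\frac{1}{i!}D^{i}f(x)(h^{i})|\,/\,|h|^{j}<\infty$ for every $x\in U$; and (iii) $f$ has a Taylor expansion of order $j$ at $\mathcal{L}^{n}$-almost every $x\in U$ (when $m=1$ one needs these classes only for $j=n$). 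With this definition, the three hypotheses of Theorem~\ref{general D-S theorem} say exactly that $f\in\mathcal{C}_{n,m}^{k}(\R^n,\R^m)$, so once the axioms (DS1)--(DS7) have been checked for this family, Theorem~\ref{abstract D-S theorem} yields the conclusion at once. The endpoint $k=1$ is not covered by Theorem~\ref{abstract D-S theorem}: there $f$ is merely continuous and Stepanov, hence differentiable almost everywhere, and I would treat it directly, by the same covering estimate used for (DS2) below or, equivalently, via the co-area inequality for Lipschitz maps after the usual countable decomposition of the critical set.

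Checking the axioms is essentially a matter of transcription. Indeed (DS4)--(DS7) are word for word (MS4)--(MS7), and (DS1), (DS3) differ from (MS1), (MS3) only notationally, so all of these are verified exactly as in the proof of Theorem~\ref{general theorem}: (DS1) because a $C^{j}$ map satisfies (i)--(iii) with $j$-th remainder $\|P_{x}^{j}\|<\infty$ by Taylor's formula, and membership forces $C^{j-1}$; (DS3) because it is built into the definition; (DS7) because (i)--(iii) are coordinatewise conditions; (DS6) because (ii) restricts to slices pointwise and trivially while property (iii) for the slices follows from (iii) for $g$ by Fubini; and (DS4)--(DS5) via the lemmas on Taylor expansions of composites and of inverse diffeomorphisms already used for Theorem~\ref{general theorem}. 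The only genuinely new point is therefore (DS2).

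To prove (DS2) I would argue as follows. Fix $2\le k\le n-m+1$, $\ell=n-m-k+1$, $f\in\mathcal{C}_{n,m}^{k}(U,V)$ and a set $A\subseteq\{x:D^{i}f(x)=0,\ i=1,\dots,k-1\}$ with $\mathcal{L}^{n}(A)=0$. On $A$ the $(k-1)$-st Taylor polynomial of $f$ at $x$ reduces to the constant $f(x)$, so hypothesis (ii) provides, for each $x\in A$, numbers $\rho_{x}>0$ and $M_{x}<\infty$ with $|f(x+h)-f(x)|\le M_{x}|h|^{k}$ whenever $|h|\le\rho_{x}$; write $A=\bigcup_{i\in\N}A_{i}$ with $A_{i}=\{x\in A:M_{x}\le i,\ \rho_{x}\ge1/i\}$. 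Fix $i$ and $\varepsilon>0$; since $\mathcal{L}^{n}(A_{i})=0$ one may cover $A_{i}$ by cubes $Q_{\nu}$ of side $r_{\nu}\le1/(i\sqrt n)$, each meeting $A_{i}$, with $\sum_{\nu}r_{\nu}^{n}<\varepsilon$. The flatness estimate gives $\textrm{diam}\,f(Q_{\nu})\le2i(\sqrt n\,r_{\nu})^{k}$, hence $\mathcal{L}^{m}(f(Q_{\nu}))\le C(n,m)\,i^{m}r_{\nu}^{km}$, and for each $y$ the sets $Q_{\nu}\cap A_{i}\cap f^{-1}(y)$ over those $\nu$ with $y\in f(Q_{\nu})$ cover $A_{i}\cap f^{-1}(y)$ by sets of diameter $\le1/i$, so
\[
\mathcal{H}^{\ell}_{1/i}\!\left(A_{i}\cap f^{-1}(y)\right)\ \le\ (\sqrt n)^{\ell}\sum_{\nu}r_{\nu}^{\ell}\,\mathbf{1}_{f(Q_{\nu})}(y).
\]
Integrating in $y$, using countable subadditivity of the upper integral, the identity $\ell+km=n+(k-1)(m-1)\ge n$, and $r_{\nu}\le1$, one gets
\[
{\int_{\R^m}^{*}}\mathcal{H}^{\ell}_{1/i}\!\left(A_{i}\cap f^{-1}(y)\right)dy\ \le\ C'\,i^{m}\sum_{\nu}r_{\nu}^{\,\ell+km}\ \le\ C'\,i^{m}\sum_{\nu}r_{\nu}^{n}\ <\ C'\,i^{m}\varepsilon .
\]
Letting $\varepsilon\downarrow0$ forces $\mathcal{H}^{\ell}_{1/i}(A_{i}\cap f^{-1}(y))=0$, hence $\mathcal{H}^{\ell}(A_{i}\cap f^{-1}(y))=0$, for almost every $y$; a countable union over $i$ establishes (DS2), and Theorem~\ref{abstract D-S theorem} then completes the proof.

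I expect (DS2) to be the main obstacle: the other axioms merely recycle the corresponding parts of the proof of Theorem~\ref{general theorem}, whereas (DS2) requires feeding the quantitative flatness coming from hypothesis (ii) into the integral-geometric (co-area type) covering argument above, and it is precisely the arithmetic identity $\ell+km=n+(k-1)(m-1)\ge n$ that makes the error terms summable against the $\mathcal{L}^{n}$-null set $A$. A secondary technical point, dealt with in the routine way, is the measurability bookkeeping in the upper-integral inequalities: one works throughout with outer integrals and uses that a nonnegative function whose upper integral is bounded by $C\varepsilon$ for every $\varepsilon>0$ must vanish almost everywhere.
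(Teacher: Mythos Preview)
Your proposal is correct and follows essentially the same route as the paper: one defines the classes $\mathcal{C}_{i,j}^{s}$ by the three conditions (i)--(iii), verifies (DS1), (DS3)--(DS7) by recycling the lemmas from the proof of Theorem~\ref{general theorem}, and establishes (DS2) via exactly the covering/upper-integral argument you give, which is the paper's Lemma~\ref{what happens with null sets in the D-S result}; the key arithmetic identity $\ell+km=n+(k-1)(m-1)\ge n$ is likewise the crux there. Your treatment of the endpoint $k=1$ is also appropriate and in fact more explicit than the paper's, which only comments on the sub-case $n=m$.
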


It is clear that Theorems \ref{abstract D-S theorem} and \ref{general D-S theorem} include Theorems \ref{abstract theorem} and \ref{general theorem} as particular cases for $k=n-m+1$, or equivalently $\ell=0$. We chose to state them separately for expository reasons.

Another consequence of Theorem \ref{abstract D-S theorem} is the following Theorem of P. Haj\l asz and S. Zimmerman \cite{HajlaszZimmerman}

\begin{theorem}[Haj\l asz-Zimmerman]\label{HajlaszZimmerman}
Fix $n, m, k\in\N$. Suppose $\Omega\subseteq\R^n$ is open and $f\in W^{k,p}_{\textrm{loc}}(\Omega, \R^m)$ for some $n<p<\infty$. If $\ell=\max\{n-m-k+1, 0\}$ then
$\mathcal{H}^{\ell}(C_{f}\cap f^{-1}(y))=0$ for a.e. $y\in\R^m$.
\end{theorem}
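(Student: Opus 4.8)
The plan is to obtain Theorem~\ref{HajlaszZimmerman} as an instance of Theorem~\ref{abstract D-S theorem}, applied to the Sobolev classes, after peeling off two degenerate ranges of parameters. Write $\ell=\max\{n-m-k+1,0\}$. We may assume $n\geq m$: if $n<m$ then $C_{f}$ is the whole of $\Omega$, $\ell=0$, and $\mathcal{L}^{m}(f(C_{f}))=0$ because $f$ — which is $C^{1}$ when $k\geq2$ and obeys the area formula when $k=1$ (here $p>n$) — carries $\Omega$ onto an $\mathcal{H}^{n}$-$\sigma$-finite set. If $k\geq n-m+1$ (which includes every case with $n=m$), then $\ell=0$; since $\mathcal{H}^{0}(S)=0$ exactly when $S=\varnothing$, the assertion ``$\mathcal{H}^{0}(C_{f}\cap f^{-1}(y))=0$ for a.e.\ $y$'' is the statement $\mathcal{L}^{m}(f(C_{f}))=0$, which, as $W^{k,p}_{\mathrm{loc}}\subset W^{n-m+1,p}_{\mathrm{loc}}$, is De Pascale's Theorem~\ref{De Pascale}. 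If $k=1$ and $n>m$, then $\ell=n-m$, and $f\in W^{1,p}_{\mathrm{loc}}$ with $p>n$ has a continuous representative, is differentiable a.e., satisfies Lusin's condition (N), and obeys the coarea formula $\int_{\R^{m}}\mathcal{H}^{n-m}(A\cap f^{-1}(y))\,dy=\int_{A}J_{m}f\,dx$; taking $A=C_{f}$, on which the $m$-dimensional Jacobian $J_{m}f$ vanishes, gives the conclusion. It therefore remains to treat $2\leq k\leq n-m$ (so $n\geq m+1$ and $\ell\geq1$), and for this I would invoke Theorem~\ref{abstract D-S theorem}.

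To do so, fix the given $p>n$ and, for every pair $n'\geq m'\geq1$ with $n'\leq n$ (the only ones occurring along the reduction in Theorem~\ref{abstract D-S theorem}, since slicing only lowers the domain dimension, so that $p>n'$ throughout) and every $2\leq j\leq n'$, put
\[
\mathcal{C}^{j}_{n',m'}(U,V):=\{\,h:U\to V \mid h\in W^{j,p}_{\mathrm{loc}}(U,\R^{m'})\,\}.
\]
Most axioms are routine. (DS1): $C^{j}\subset W^{j,p}_{\mathrm{loc}}$ trivially, while $W^{j,p}_{\mathrm{loc}}\subset C^{j-1}$ by Morrey's embedding ($p>n'$). (DS3): for $h\in\mathcal{C}^{k}_{n,m}=W^{k,p}_{\mathrm{loc}}$ one has $D^{k-1}h\in W^{1,p}_{\mathrm{loc}}$, hence differentiable a.e.\ (Calder\'on), so the $C^{k-1}$ map $h$ admits a Taylor expansion of order $k$ a.e. (DS7): it only stacks components. (DS6): almost every coordinate slice of a $W^{j,p}_{\mathrm{loc}}$ map is again $W^{j,p}_{\mathrm{loc}}$ (Fubini plus absolute continuity on lines, applied to the map and its derivatives). (DS4)--(DS5): since all partial derivatives of order $\leq j-1$ of a $W^{j,p}_{\mathrm{loc}}$ ($p>n'$) map are locally bounded, the chain rule (Fa\`a di Bruno) makes $W^{j,p}_{\mathrm{loc}}$ stable under post-composition with $C^{j}$ maps and under composition with, and inversion of, diffeomorphisms lying in the class.

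The one substantial verification is (DS2), which is De Pascale's estimate lifted to Dubovitski\v{\i} fibre form. Let $A\subset\{x:D^{j}f(x)=0,\ j=1,\dots,k-1\}$ with $\mathcal{L}^{n}(A)=0$. Combining Taylor's formula with integral remainder for the $C^{k-1}$ map $f$ (whose derivatives of orders $1,\dots,k-1$ vanish on $A$) with Morrey's pointwise inequality for $D^{k-1}f\in W^{1,p}_{\mathrm{loc}}$ yields, for $x\in A$ and small $|h|$,
\[
|f(x+h)-f(x)|\ \leq\ C\,|h|^{\,k-n/p}\,\|D^{k}f\|_{L^{p}(B(x,2|h|))}.
\]
Given $\varepsilon,\delta>0$, take (by a Besicovitch-type construction) balls $B_{i}=B(x_{i},r_{i})$ with $x_{i}\in A$, $r_{i}<\delta$, covering $A$, with $\sum_{i}r_{i}^{\,n}\leq C\varepsilon$, all contained in an open set $\Omega'\supset A$ with $\mathcal{L}^{n}(\Omega')<\varepsilon$, and with the fivefold dilates $5B_{i}$ still of bounded overlap and inside $\Omega'$. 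Then $\operatorname{diam}(f(B_{i}\cap A))\leq C\,r_{i}^{\,k-n/p}\|D^{k}f\|_{L^{p}(5B_{i})}$, whence
\[
\int_{\R^{m}}\mathcal{H}^{\ell}_{2\delta}\bigl(A\cap f^{-1}(y)\bigr)\,dy\ \leq\ C\sum_{i}r_{i}^{\,\ell}\Bigl(r_{i}^{\,k-n/p}\|D^{k}f\|_{L^{p}(5B_{i})}\Bigr)^{m}.
\]
By H\"older's inequality with exponents $p/m$ and $p/(p-m)$ (valid as $p>n\geq m$), the right side is at most $\bigl(\sum_{i}\int_{5B_{i}}|D^{k}f|^{p}\bigr)^{m/p}\bigl(\sum_{i}r_{i}^{\,s}\bigr)^{(p-m)/p}$ with
\[
s=\Bigl(\ell+\bigl(k-\tfrac{n}{p}\bigr)m\Bigr)\tfrac{p}{p-m}=n+(m-1)(k-1)\tfrac{p}{p-m}\geq n;
\]
as $\delta<1$ and $\sum_{i}r_{i}^{\,n}\leq C\varepsilon$, both factors are controlled, the first being $\leq\bigl(C\int_{\Omega'}|D^{k}f|^{p}\bigr)^{m/p}\to0$ as $\varepsilon\to0$ by absolute continuity of the integral. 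Letting $\delta\to0$ along a sequence and extracting a subsequence gives $\mathcal{H}^{\ell}(A\cap f^{-1}(y))=0$ for a.e.\ $y$, i.e.\ (DS2).

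With every axiom verified (only (DS1)--(DS3) are in fact needed when $m=1$), Theorem~\ref{abstract D-S theorem} yields $\mathcal{H}^{\ell}(C_{f}\cap f^{-1}(y))=0$ for a.e.\ $y\in\R^{m}$, which completes the proof. I expect (DS2) to be the only real obstacle: choosing the covering so that the dilated balls have bounded overlap, and arranging the bookkeeping so that the $n/p$-defect cancels and the surviving exponent $s$ comes out exactly $\geq n$ — which is precisely where $p>n$ is used quantitatively. Everything else reduces to standard facts about Sobolev maps with $p>n$.
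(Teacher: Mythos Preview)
Your proof is correct and follows essentially the same route as the paper: define $\mathcal{C}^{s}_{i,j}:=W^{s,p}_{\mathrm{loc}}$, verify (DS1)--(DS7), and apply Theorem~\ref{abstract D-S theorem}, with (DS2) being the only substantive check (the paper simply cites \cite[Claim 3.1, Step 1]{HajlaszZimmerman}, while you spell out the Morrey--Taylor estimate and the H\"older split with exponent $s=n+(m-1)(k-1)\frac{p}{p-m}\geq n$). Your additional handling of the degenerate ranges $n<m$, $k=1$, and $k\geq n-m+1$ is welcome; the only minor technical wrinkle is ensuring bounded overlap of the dilated balls in your covering, which is easily arranged (e.g.\ via Besicovitch with the Morrey estimate taken directly on $B_i$ rather than $5B_i$).
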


Both Theorems \ref{general D-S theorem} and \ref{HajlaszZimmerman} are generalizations of the so-called Dubovitski\v{\i}-Sard theorem \cite{Dubo}, which guarantees the same conclusion under the more stringent assumption that $f\in C^{k}(\R^n, \R^m)$.

Of course the assumptions of Theorems \ref{abstract theorem} and \ref{abstract D-S theorem}, as well as the introduction of the classes $\mathcal{C}^{j}_{n,m}$, may look rather artificial (and, as the reader may suspect, they are tailored to the proof we give). Nonetheless we believe they are quite useful, as they allow us to provide simple and elementary proofs of known results (cf. \cite{Bates} and Theorems \ref{De Pascale} and \ref{HajlaszZimmerman}; see also Remark \ref{simplicity}), and to simultaneously obtain new results such as Theorem \ref{Stepanov meets Morse-Sard} and Corollary \ref{corollary by Norton}.

The rest of this paper is organized as follows. In Section 2 we will collect several known or easy results that will facilitate the proof of Theorem \ref{abstract theorem}, which we will provide in Section 3.
In section 4 we will explain how Theorem \ref{general theorem} is deduced from Theorem \ref{abstract theorem}. In Section 5 we will explain how Theorem \ref{De Pascale} can be deduced from Theorem \ref{abstract theorem} with little effort, we will study some examples that will clarify the relations of the above results to other authors' work, and we will state and prove some variants of Theorem \ref{general theorem} for $m=1$. Finally, in Section 6 we will explain what ingredients have to be added to the proofs of  Theorems \ref{abstract theorem} and \ref{general theorem} in order to obtain Theorems  \ref{abstract D-S theorem} and \ref{general D-S theorem} (again, we choose to separate proofs for expository reasons and because the Dubovitski\v{\i}-Sard-type results require using more advanced tools, such as the upper integral and Hausdorff measures).

\medskip

\section{Some tools}

In the proof of our main results it will be very convenient to use the Kneser-Glaeser Rough Composition Theorem, which we next restate (see \cite[Theorem 14.1]{AbrahamRobbin} for its proof, based on an application of Whitney's Extension Theorem \cite{Whitney1934}). Let us recall that, given a positive integer $s$, a map $f$ is said to be $s$-flat on a set $A$ if $D^{j}f(x)=0$ for every $x\in A$ and $j=1, ..., s$.

\begin{theorem}[Kneser-Glaeser]\label{Kneser Glaeser}
Let $W\subset\R^m$ and $V\subset\R^n$ be open sets; $A^{*}\subset W$ and
$A\subset V$, with $A$ closed relative to $V$, $f:V\to\R^p$ of class $C^r$ on $V$ and $s$-flat on $A$, $g:W\to V$ of class $C^{r-s}$ with $g(A^{*})\subset A$. Then there is a map $H:W\to\R^{p}$ of class $C^r$ satisfying:
\begin{enumerate}
\item $H(x)=f(g(x))$ for $x\in A^{*}$;
\item $H$ is $s$-flat on $A^{*}$.
\end{enumerate}
\end{theorem}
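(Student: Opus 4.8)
The plan is to realize the composite $f\circ g$, which a priori is only of class $C^{r-s}$ on $W$, as the restriction to $A^{*}$ of a genuinely $C^{r}$ map, by first building a compatible Whitney jet of order $r$ along $A^{*}$ and then invoking Whitney's Extension Theorem \cite{Whitney1934}. The $s$-flatness of $f$ on $A$ is exactly what compensates for the $s$ orders of smoothness missing from $g$. First I would reduce to the case in which $A^{*}$ is closed: replacing $A^{*}$ by its closure relative to $W$ disturbs no hypothesis, since the jet data below depend continuously on $x$ and, as $g$ is continuous and $A$ is closed relative to $V$, we still have $g(\overline{A^{*}})\subset A$ (whence the flatness of $f$ persists along the larger set).

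Next I would construct the candidate jet. For $x\in A^{*}$ write $y=g(x)\in A$; let $T^{x}_{r-s}g$ be the degree-$(r-s)$ Taylor polynomial of $g$ at $x$ (available since $g\in C^{r-s}$) and $T^{y}_{r}f$ the degree-$r$ Taylor polynomial of $f$ at $y$ (available since $f\in C^{r}$). Define $H^{x}$ to be the truncation to degree $r$, in the variable $x'-x$, of the composition $T^{y}_{r}f\bigl(T^{x}_{r-s}g(x')\bigr)$, and set $H_{\alpha}(x):=\partial^{\alpha}_{x'}H^{x}(x')\big|_{x'=x}$ for $0\le|\alpha|\le r$. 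The crucial bookkeeping observation (Faà di Bruno) is that only the derivatives $D^{i}f$ with $0\le i\le r$ and $D^{j}g$ with $1\le j\le r-s$ actually enter $H_{\alpha}$: since $T^{x}_{r-s}g(x')-y$ has no constant term, a degree-$i$ homogeneous part of $T^{y}_{r}f$ contributes to a total degree $d\le r$ only through products of $i$ derivatives of $g$ whose orders sum to $d$; because $s$-flatness forces these homogeneous parts to vanish for $1\le i\le s$, every surviving $g$-factor has order at most $d-s\le r-s$. Thus $H_{\alpha}$ is well defined and continuous on $A^{*}$. The same vanishing shows $H^{x}$ has no terms of degrees $1,\dots,s$, i.e. $H_{\alpha}(x)=0$ for $1\le|\alpha|\le s$; this will be conclusion (2), and $H_{0}(x)=f(g(x))$ will give conclusion (1).

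The main obstacle is to verify that $\{H_{\alpha}\}_{|\alpha|\le r}$ is a Whitney field of class $C^{r}$ on $A^{*}$, i.e. that for $x,x'$ ranging over a fixed compact subset of $A^{*}$,
$$
H_{\alpha}(x')-\sum_{|\beta|\le r-|\alpha|}\frac{H_{\alpha+\beta}(x)}{\beta!}(x'-x)^{\beta}=o\bigl(|x'-x|^{r-|\alpha|}\bigr)
$$
uniformly. Here one combines the genuine order-$r$ Taylor estimates for the derivatives of $f$ at $y=g(x)$, the order-$(r-s)$ Taylor estimates for the derivatives of $g$ at $x$, and the local Lipschitz bound $|y'-y|=|g(x')-g(x)|\le C|x'-x|$. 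Expanding $H_{\alpha}(x')$ by Faà di Bruno and substituting these expansions, the terms reorganize into the formal Taylor polynomial of the field at $x$ plus error terms; each error carries either a Taylor remainder of an $f$-derivative of size $o(|y'-y|^{r-i})=o(|x'-x|^{r-i})$ multiplied by $g$-factors, or a Taylor remainder of a $g$-derivative of size $o(|x'-x|^{r-s-j})$ multiplied by a factor $D^{i}f(y)$ with $i\ge s+1$. In either case the flatness index $i\ge s+1$ supplies precisely the $s$ extra powers of $|x'-x|$ needed to reach total order $r-|\alpha|$; this is exactly what fails for the naive composite and is the entire point of the flatness hypothesis. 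This estimate is the technical heart and requires careful but routine multi-index accounting.

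Finally I would apply Whitney's Extension Theorem \cite{Whitney1934} to the field $\{H_{\alpha}\}$ on the closed set $A^{*}$, obtaining $H\in C^{r}(W,\R^{p})$ with $\partial^{\alpha}H=H_{\alpha}$ on $A^{*}$. Taking $\alpha=0$ yields $H=f\circ g$ on $A^{*}$, which is conclusion (1), while $H_{\alpha}\equiv 0$ on $A^{*}$ for $1\le|\alpha|\le s$ yields that $H$ is $s$-flat on $A^{*}$, which is conclusion (2), completing the proof.
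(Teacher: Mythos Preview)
The paper does not actually prove this theorem; it merely restates it and refers the reader to \cite[Theorem 14.1]{AbrahamRobbin} for the proof, noting only that it is ``based on an application of Whitney's Extension Theorem \cite{Whitney1934}.'' Your proposal follows precisely that route---constructing a candidate $C^{r}$ Whitney jet on $A^{*}$ via truncated composition of Taylor polynomials, using the $s$-flatness of $f$ to ensure that only derivatives $D^{j}g$ with $j\le r-s$ appear (so the jet is well defined), verifying the Whitney compatibility conditions, and extending---which is exactly the standard argument in Abraham--Robbin that the paper cites. Your sketch is correct and there is nothing to compare it against within the paper itself.
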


We next state and prove eight lemmas that we will also need in the next two sections.

\begin{lemma}\label{n null sets are mapped onto m null sets}
Let $n\geq m$, $k=n-m+1$, and $f:\R^n\to\R^m$ be a function. Assume that $N$ is a subset of $\{x: \limsup_{h\to 0}\frac{|f(x+h)-f(x)|}{|h|^k}<\infty\}$, and that $\mathcal{L}^n(N)=0$. Then $\mathcal{L}^m(f(N))=0$ as well.
\end{lemma}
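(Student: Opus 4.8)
The plan is to reduce the statement to a countable union of sets on which $f$ is Lipschitz-like with a fixed modulus, and then use the well-known fact that a $(1/k)$-Hölder-type estimate turns $n$-dimensional null sets into $m$-dimensional null sets when $n=km$... wait, here the relation is $k=n-m+1$, so $n = m + k - 1$, not $n=km$. Let me reconsider: the content is that on $N$, the oscillation of $f$ over a ball of radius $r$ is $O(r^k)$, and we have $\mathcal{L}^n(N)=0$ with $k(n-k+1)\geq n$ — indeed $k + (n-k) = n$ and $km \geq$... actually the key numerical identity is $k \cdot m = k(n-k+1) \geq n$ for $k\geq 1$, since $k(n-k+1)-n = (k-1)(n-k) \geq 0$. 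So a set of $\mathcal{L}^n$-measure zero, when we cover it efficiently and map each piece, produces an $\mathcal{L}^m$-measure-zero image because $m \le n/k$.

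First I would decompose $N = \bigcup_{j} N_j$ where $N_j = \{x \in N : |f(x+h)-f(x)| \le j|h|^k \text{ for all } |h| \le 1/j\}$; by hypothesis $N = \bigcup_j N_j$, so it suffices to show $\mathcal{L}^m(f(N_j))=0$ for each fixed $j$. Next, fix $\varepsilon>0$. Since $\mathcal{L}^n(N_j)=0$, I can cover $N_j$ by countably many balls $B_i = B(x_i, r_i)$ with $x_i \in N_j$, $r_i < 1/(2j)$, and $\sum_i r_i^n < \varepsilon$ (standard: null sets have efficient ball covers, and one may arrange centers in the set by a Vitali-type argument or simply by intersecting with $N_j$ and re-centering with a factor-$2$ loss). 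For each $i$, the estimate gives $\mathrm{diam}\, f(B_i \cap N_j) \le C j r_i^k$ for a dimensional constant $C$ absorbing the factor $2^k$, hence $f(B_i \cap N_j)$ is contained in a ball of radius $C j r_i^k$ in $\R^m$, of $\mathcal{L}^m$-measure at most $C' (j r_i^k)^m = C' j^m r_i^{km}$. Summing, $\mathcal{L}^m(f(N_j)) \le C' j^m \sum_i r_i^{km}$. Since $km \ge n$ and $r_i < 1$, we have $r_i^{km} \le r_i^{n}$, so $\sum_i r_i^{km} \le \sum_i r_i^n < \varepsilon$, giving $\mathcal{L}^m(f(N_j)) \le C' j^m \varepsilon$. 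Letting $\varepsilon \to 0$ yields $\mathcal{L}^m(f(N_j))=0$, and taking the countable union over $j$ finishes the proof.

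The main point to get right — and the only place requiring a little care — is the numerical inequality $km \ge n$ with $k = n-m+1$, i.e. $(k-1)(n-k)\ge 0$, which holds since $1 \le k \le n$; this is exactly what makes the Morse-Sard exponent work and ensures $r_i^{km} \le r_i^n$ so that an $\mathcal{L}^n$-efficient cover is also $\mathcal{L}^m$-efficient after pushing forward. The other mild technical point is producing the covering of $N_j$ by balls \emph{centered on $N_j$} with controlled radii and small total $n$-volume; this is routine measure theory (take an open set $O \supset N_j$ with $\mathcal{L}^n(O)<\varepsilon$, apply a Vitali covering lemma to balls $B(x,r) \subset O$ with $x\in N_j$ and $r<1/(2j)$), but it is worth stating cleanly since the pointwise bound in the definition of $N_j$ is only available when the center lies in $N_j$. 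No deep tools are needed beyond this; the lemma is genuinely elementary, which is consistent with the paper's stated goal of an elementary proof of Morse-Sard.
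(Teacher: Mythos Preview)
Your proof is correct and follows essentially the same approach as the paper: decompose $N$ into countably many pieces on which the $k$-th order Hölder-type bound is uniform, cover each piece efficiently by small sets, and exploit the key numerical inequality $km\geq n$ (equivalently $(k-1)(m-1)\geq 0$) to control the $\mathcal{L}^m$-measure of the image. The only technical difference is that you combine the two parameters (the limsup bound $j$ and the scale at which it takes effect) into a single decomposition $N_j$ and then invoke a Vitali-type covering by balls centered on $N_j$, whereas the paper separates these into a limsup level set $A_j$ and an increasing exhaustion $D_i$, covers by arbitrary small cubes, and passes to the limit via monotonicity of outer measure on increasing sequences---this avoids Vitali entirely but at the cost of a slightly more layered argument.
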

\begin{proof}
For each $j\in\N$ we define $$A_j=\{x\in\R^n :
\limsup_{h\to 0}\frac{|f(x+h)-f(x)|}{|h|^k}\leq j\}.$$
It is clear that $N\subset\bigcup_{j=1}^{\infty} A_j$, hence it is enough to see that $\mathcal{L}^m(f(N\cap A_j))=0$ for each $j$.

Let us fix $j\in\N$, $\varepsilon>0$. Since $\mathcal{L}^n(N)=0$, for each $i\in\N$ there exists a sequence $\{C_{i \alpha}\}_{\alpha}$ of cubes with $\textrm{diam}(C_{i \alpha})\leq 1/i$, $N\subset \bigcup_{\alpha}C_{i \alpha}$, and
$$\sum_{\alpha}\textrm{vol}(C_{i \alpha})<\frac{\varepsilon}{n^{n/2}\alpha(m) (j+1)^m},$$
where $\alpha(m)$ denotes the volume of the unit ball of $\R^m$.
Define now
$$
D_i=\{x\in A_j\cap N : |f(x+h)- f(x)|\leq (j+1)|h|^k \textrm{ whenever } |h|<\frac{1}{i}\}.
$$
It is easily checked that $D_i\subseteq D_{i+1}$ for every $i$, and
$A_j\cap N= \bigcup_{i=1}^{\infty}D_i$.

If $x, y\in D_i\cap C_{i \alpha}$ we have $|f(x)-f(y)|\leq (j+1)|x-y|^k$ and therefore, using that $km=(k-1)(m-1)+n \geq n$, we get
$$
|f(x)-f(y)|^m\leq (j+1)^m \textrm{diam}(C_{i \alpha})^{km}\leq
(j+1)^m \textrm{diam}(C_{i \alpha})^{n},
$$
which implies that
$$
\textrm{diam}(f(D_i\cap C_{i \alpha}))^{m}\leq
(j+1)^m \textrm{diam}(C_{i \alpha})^{n},
$$ 
and consequently
$$
\mathcal{L}^m(f(D_i\cap C_{i \alpha}))\leq (j+1)^m n^{n/2} \alpha(m)\textrm{vol}(C_{i \alpha})
$$
as well.
By \cite[Theorem 1.1.2, p.5]{EvansGariepy} we obtain
\begin{eqnarray*}
& &\mathcal{L}^m(f(A_j\cap N))=\lim_{i\to\infty}\mathcal{L}^m(f(D_i))\leq
\limsup_{i\to\infty}\sum_{\alpha}\mathcal{L}^m (f(D_i\cap C_{i \alpha}))\leq\\
& &\limsup_{i\to\infty}\sum_{\alpha}(j+1)^m n^{n/2} \alpha(m)\textrm{vol}(C_{i \alpha}) \leq \varepsilon,
\end{eqnarray*}
and by letting $\varepsilon$ go to $0$ we conclude that $\mathcal{L}^m(f(N\cap A_j))=0$.
\end{proof}

\begin{lemma}\label{points with 0 jets are good}
Let $f:\R^n\to\R^m$ be a function, $n\geq m$, $k=n-m+1$. Then
$$
\mathcal{L}^m\left(f\left(\{x\in\R^n : (0, ..., 0)\in J^{k}f(x)\}\right)\right)=0.
$$
\end{lemma}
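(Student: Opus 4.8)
\textbf{Proof proposal for Lemma~\ref{points with 0 jets are good}.}
The plan is to reduce this statement to the preceding Lemma~\ref{n null sets are mapped onto m null sets} by checking that the set in question, after removing a Lebesgue-null set, is contained in the ``good set'' to which that lemma applies. Write $B=\{x\in\R^n : (0,\dots,0)\in J^k f(x)\}$; for every $x\in B$ the Taylor polynomial $P_x$ is identically zero, so by the very definition of a Taylor expansion of order $k$ we have $\lim_{h\to 0}\frac{|f(x+h)-f(x)|}{|h|^k}=0$, and in particular $\limsup_{h\to 0}\frac{|f(x+h)-f(x)|}{|h|^k}<\infty$. Thus $B$ is \emph{entirely} contained in the set $\{x: \limsup_{h\to 0}\frac{|f(x+h)-f(x)|}{|h|^k}<\infty\}$ appearing in Lemma~\ref{n null sets are mapped onto m null sets}.

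First I would observe that it therefore suffices, by that lemma, to prove $\mathcal{L}^n(B)=0$; then $\mathcal{L}^m(f(B))=0$ follows immediately. To show $B$ is Lebesgue-null, I would fix $j\in\N$ and examine the subset $B_j=\{x\in B : |f(x+h)-f(x)|\leq |h|^k \textrm{ whenever }|h|\leq 1/j\}$, so that $B=\bigcup_{j=1}^\infty B_j$ and it is enough to prove $\mathcal{L}^n(B_j)=0$ for each $j$. The key point is a density (Lebesgue-point / blow-up) argument: if $B_j$ had positive measure, pick a point $x_0$ of Lebesgue density $1$ for $B_j$. Then the values of $f$ on the intersection of $B_j$ with a small ball $\bar B(x_0,r)$ all lie within $r^k$ of $f(x_0)$, while for the purposes of getting a contradiction with positivity of measure I would instead argue at the level of a single coordinate, or exploit the ``rank'' obstruction directly.

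Actually the cleaner route, which I would adopt, is to show that $B$ is contained (up to a null set) in the critical set where $Df=0$ almost everywhere, and more: since on $B$ the first-order jet $P^1_x$ vanishes, $f$ is differentiable at every point of $B$ with $Df(x)=0$. By a standard fact (the set of points where a function is differentiable with zero derivative, yet which has positive density there, forces the function to have small oscillation) one derives a Hausdorff-measure estimate. Concretely, fixing $j$ and covering $B_j$ by cubes of side $\leq 1/j$, each cube $Q$ with $Q\cap B_j\neq\emptyset$ satisfies $\textrm{diam}\,f(Q\cap B_j)\leq (\textrm{diam}\,Q)^k$, hence $\mathcal{L}^m(f(Q\cap B_j))\leq c_{n,m}(\textrm{diam}\,Q)^{km}\leq c_{n,m}(\textrm{diam}\,Q)^n$ using $km\geq n$ exactly as in the proof of Lemma~\ref{n null sets are mapped onto m null sets}. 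Summing over an efficient cover of $B_j$ gives $\mathcal{L}^m(f(B_j))=0$, and letting $j\to\infty$ finishes the argument — so in fact one need not separately establish $\mathcal{L}^n(B)=0$; one can run the covering argument from Lemma~\ref{n null sets are mapped onto m null sets} verbatim, the only new input being the \emph{trivial} verification that $(0,\dots,0)\in J^k f(x)$ forces the relevant $\limsup$ to be finite (indeed zero).

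The main obstacle, such as it is, is essentially bookkeeping: one must be careful that the ``almost every $h$'' versus ``every small $h$'' quantifiers match what Lemma~\ref{n null sets are mapped onto m null sets} actually requires, and that the covering constants ($n^{n/2}$, $\alpha(m)$, and the factor $(j+1)^m$) are handled correctly — but since on $B$ the estimate $|f(x+h)-f(x)|\leq \varepsilon|h|^k$ holds for \emph{every} sufficiently small $h$ (not merely in an averaged sense), the situation is strictly easier than in Lemma~\ref{n null sets are mapped onto m null sets}, where only a $\limsup$ bound was available. Thus I expect no genuine difficulty: the statement is a direct corollary of the method of proof of the previous lemma, applied to the set $B$ which — being contained in the set where the order-$k$ difference quotient tends to $0$ — is automatically ``good.''
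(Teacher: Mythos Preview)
Your proposal has a genuine gap. You correctly observe that every $x\in B$ satisfies $\lim_{h\to 0}|f(x+h)-f(x)|/|h|^k=0$ and hence lies in the ``good set'' of Lemma~\ref{n null sets are mapped onto m null sets}. But that lemma requires the additional hypothesis $\mathcal{L}^n(N)=0$, and this is generally \emph{false} for $B$: if $f\equiv 0$ then $B=\R^n$. Your density argument aimed at showing $\mathcal{L}^n(B)=0$ therefore cannot succeed, and indeed you abandon it.

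The covering argument you then propose fails at the sentence ``Summing over an efficient cover of $B_j$ gives $\mathcal{L}^m(f(B_j))=0$.'' Your estimate $\mathcal{L}^m(f(Q\cap B_j))\leq c_{n,m}(\textrm{diam}\,Q)^n$ is correct, but summing over any cover of $B_j$ by small cubes yields only $c_{n,m}\sum_Q(\textrm{diam}\,Q)^n$, which is bounded below by a dimensional constant times $\mathcal{L}^n(B_j)$ and need not be small. In Lemma~\ref{n null sets are mapped onto m null sets} this sum \emph{was} small precisely because $\mathcal{L}^n(N)=0$ allowed the choice of a cover of arbitrarily small total volume; here no such hypothesis is available.

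The missing idea is to actually exploit that the limit is \emph{zero}, not merely finite. For each $\varepsilon>0$ one defines instead $D_i=\{x:|f(x+h)-f(x)|\leq\varepsilon|h|^k\textrm{ whenever }|h|\leq\sqrt{n}/i\}$, so that on each cube $Q$ of diameter $\sqrt{n}/i$ one has $\textrm{diam}\,f(D_i\cap Q)\leq\varepsilon(\textrm{diam}\,Q)^k$ and hence $\mathcal{L}^m(f(D_i\cap Q))\leq c\,\varepsilon^m(\textrm{diam}\,Q)^n$. Summing over a partition of a fixed cube $[-R,R]^n$ (not an efficient cover of $B$) gives $\mathcal{L}^m(f(D_i))\leq c\,\varepsilon^m(2R)^n$; since $B\subset\bigcup_i D_i$ and $\varepsilon>0$ is arbitrary, the conclusion follows. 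This is exactly the paper's proof: the factor $\varepsilon^m$, not a small-volume cover, is what drives the bound to zero.
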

\begin{proof}
Let us denote $A=\{x\in\R^n : (0, ..., 0)\in J^{k}f(x)\}$. We want to see that $\mathcal{L}^m(f(A))=0$. By using obvious truncation arguments we may assume, without loss of generality, that $A\subset B:=[-R,R]^{n}$ for some fixed $R\in\N$. Let $\varepsilon >0$. For every $i\in \N$ we define
$$
D_i=\{ x\in B : |f(x+h)-f(x)|\leq \varepsilon |h|^k \textrm{ whenever } |h|\leq \frac{\sqrt{n}}{i}\}.
$$
Note that $\{ D_i\}$ is an increasing sequence of sets such that $A\subset \bigcup _j D_j $ (indeed, for every $x\in A$ we have
$$
\lim_{h\to 0}\frac{f(x+h)-f(x)}{|h|^k}= 0,
$$
and consequently $|f(x+h)-f(x)|\leq \varepsilon |h|^k$ if $|h|\leq \frac{\sqrt{n}}{i}$ for $i$ big enough).
For each $i$, we may decompose $B$ as the union of a family of cubes $\{C_{i \alpha}\}_{\alpha=1}^{N(i)}$ of diameter  $\sqrt{n}/i$ (hence of volume $1/i^n$) with pairwise disjoint interiors. In particular 
$\sum_{\alpha}\textrm{vol}(C_{i \alpha})=\sum_{\alpha}\frac{1}{i^n}=\textrm{vol}(B)$.

If $x,y\in D_i\cap C_{i \alpha}$ then $|x-y|\leq \frac{\sqrt{n}}{i}$, hence
\begin{equation}\label{function well bounded}
|f(y)-f(x)|\leq \varepsilon |x-y|^k\leq \frac{\varepsilon n^{k/2}}{i^k},
\end{equation}
and (recalling that $km\geq n$)
\begin{equation}\label{diameters well bounded}
|f(y)-f(x)|^m\leq  \frac{\varepsilon^m n^{km/2}}{i^{km}}\leq 
\frac{\varepsilon^m n^{km/2}}{i^{n}}=\varepsilon^m \, n^{km/2}\textrm{vol}(C_{i \alpha}).
\end{equation}
It follows that
$$
\mathcal{L}^m\left(f(D_i\cap C_{i \alpha})\right)\leq \varepsilon^m\alpha(m)n^{km/2}\textrm{vol}(C_{i \alpha}),
$$
and therefore
\begin{eqnarray*}
& &\mathcal{L}^m(f(D_i))\leq \sum_{\alpha} \mathcal{L}^m (f(D_i\cap C_{i \alpha}))\\
& & \leq\varepsilon^m\alpha(m)n^{km/2}\sum_{\alpha}\textrm{vol} (C_{i \alpha})=
\varepsilon^m\alpha(m)n^{km/2}\textrm{vol}(B).
\end{eqnarray*}
Using \cite[Theorem 1.1.2, p.5]{EvansGariepy} we obtain
\begin{eqnarray*}
& &\mathcal{L}^m(f(A))\leq \lim_{i\to\infty}\mathcal{L}^m(f(D_i))\leq \varepsilon^{m} \alpha(m) n^{km/2}\textrm{vol}(B),
\end{eqnarray*}
and by letting $\varepsilon$ go to $0$ we conclude that  $\mathcal{L}^m(f(A))=0$.
\end{proof}

\begin{lemma}\label{Cantor}
Let $C$ be a bounded subset of $\R$ with the following property:
there exists $\alpha \in (0, \frac{1}{2})$ such that, for every $x,y\in C$, the interval
$$I_{xy}=\bigl( \frac{x+y}{2}-\alpha |y-x|,\frac{x+y}{2}+\alpha |y-x|\bigr)
$$
does not intersect $C$. Then $\mathcal{L}^{1}(C)=0$.
\end{lemma}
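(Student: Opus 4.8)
The plan is to run a Cantor\hspace{0pt}-set\hspace{0pt}-style covering argument, treating the hypothesis as a uniform porosity (self\hspace{0pt}-similarity) condition that forces a definite gap in the middle of the convex hull of $C$. If $C$ is empty or a single point there is nothing to prove, so I would assume $C$ has at least two points and set $c:=\inf C$ and $d:=\sup C$; these are finite because $C$ is bounded, and $c<d$.

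The key step is to show that $C$ avoids the open middle $2\alpha$\hspace{0pt}-portion of its convex hull, i.e. $C\cap\bigl(\tfrac{c+d}{2}-\alpha(d-c),\,\tfrac{c+d}{2}+\alpha(d-c)\bigr)=\emptyset$. For this, given $\eta>0$ I would pick $x,y\in C$ with $x<c+\eta$ and $y>d-\eta$ (possible by the definition of $\inf$ and $\sup$); since $y-x>d-c-2\eta>0$ for small $\eta$, a direct estimate of the two endpoints of $I_{xy}$ shows that $I_{xy}$ contains the closed interval $\bigl[\tfrac{c+d}{2}-\alpha(d-c)+(\tfrac12+2\alpha)\eta,\ \tfrac{c+d}{2}+\alpha(d-c)-(\tfrac12+2\alpha)\eta\bigr]$, which by hypothesis is disjoint from $C$; letting $\eta\to 0$ gives the claim. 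Consequently $C=C_1\cup C_2$ with $C_1:=C\cap[c,\tfrac{c+d}{2}-\alpha(d-c)]$ and $C_2:=C\cap[\tfrac{c+d}{2}+\alpha(d-c),d]$; each $C_i$ is bounded, is contained in an interval of length exactly $(\tfrac12-\alpha)(d-c)$, and, being a subset of $C$, satisfies the hypothesis of the lemma with the same $\alpha$.

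Iterating this dichotomy $N$ times, one covers $C$ by $2^N$ intervals each of length at most $(\tfrac12-\alpha)^N(d-c)$, so $\mathcal{L}^1(C)\le 2^N(\tfrac12-\alpha)^N(d-c)=(1-2\alpha)^N(d-c)$. Since $\alpha\in(0,\tfrac12)$ we have $0<1-2\alpha<1$, and letting $N\to\infty$ forces $\mathcal{L}^1(C)=0$. The argument is elementary throughout; the only point that requires a little care is the key step above, where one must approximate $c=\inf C$ and $d=\sup C$ by genuine points of $C$ and pass to the limit, because the excluded intervals $I_{xy}$ are open — one cannot simply substitute the endpoints of the hull.
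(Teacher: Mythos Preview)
Your proof is correct and rests on the same geometric observation as the paper's: the hypothesis forces a gap of relative length $2\alpha$ in the middle of the convex hull of any subset of $C$, which you verify by approximating $\inf C$ and $\sup C$ by actual points of $C$. The paper packages this as a uniform density bound $\mathcal{L}^1(C\cap I)\le(1-2\alpha)\,\mathcal{L}^1(I)$ for every interval $I$ and concludes in one stroke via $\mathcal{L}^1(C)\le(1-2\alpha)\mathcal{L}^1(C)$, whereas you iterate the splitting explicitly to build a Cantor-type covering by $2^N$ intervals of total length $(1-2\alpha)^N(d-c)$; the two arguments are equivalent repackagings of the same idea.
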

\begin{proof}
Take $r$ so that $2\alpha< r<1$. By the hypothesis, for every  interval
$I$, $\mathcal{L}^{1}(C\cap I)\leq (1-r)\mathcal{L}^{1}(I)$. As a
consequence, if  $(I_k)_k
$ is a sequence of intervals such that $C\subset \cup_k I_k$ then
\begin{equation*}
\mathcal{L}^{1}(C)\leq \sum_k \mathcal{L}^{1}(C\cap I_k)\leq
(1-r)\sum_k \mathcal{L}^{1}(I_k)
\end{equation*}
which implies that $\mathcal{L}^{1}(C)\leq (1-r)\mathcal{L}^{1}(C)$
and therefore  that $\mathcal{L}^{1}(C)=0$.
\end{proof}

\begin{lemma}\label{inverses are OK1}
Let $k\geq 2$ be a positive integer and $\varphi:U\to V$ be a $C^{k-1}$ diffeomorphism between two open subsets of $\R^n$. Assume that $\varphi$ has a Taylor expansion of order $k$ at $x$. Then the inverse diffeomorphism $\varphi^{-1}$ has a Taylor expansion of order $k$ at $y=\varphi(x)$.

In particular, if $\varphi$ has a Taylor expansion of order $k$ almost everywhere then so does $\varphi^{-1}$.
\end{lemma}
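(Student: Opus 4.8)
The plan is to normalise so that $x=0$ and $\varphi(x)=0$, to \emph{formally invert} the order-$k$ Taylor polynomial of $\varphi$ at $0$, and then to verify that the polynomial so obtained is an order-$k$ Taylor polynomial for $\varphi^{-1}$ at $0$.

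First I would set $\psi=\varphi^{-1}$ and exploit that $\varphi$ is a $C^{k-1}$ (in particular $C^{1}$) diffeomorphism to get that $\varphi$ and $\psi$ are locally bi-Lipschitz near the origin; thus there is $C>1$ with $C^{-1}|h|\le|\varphi(h)|\le C|h|$ for $h$ near $0$, equivalently $C^{-1}|w|\le|\psi(w)|\le C|w|$ for $w$ near $0$, and $\psi(w)\to 0$ as $w\to 0$. Let $L=D\varphi(0)$, which is invertible. The hypothesis supplies a polynomial $P(h)=Lh+P^{2}(h)+\dots+P^{k}(h)$ of degree $\le k$ with $\varphi(h)=P(h)+R(h)$ and $R(h)=o(|h|^{k})$.

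Next I would construct a polynomial $Q(w)=L^{-1}w+Q^{2}(w)+\dots+Q^{k}(w)$ of degree $\le k$ such that $Q(P(h))-h$ is a polynomial all of whose monomials have degree $\ge k+1$. This is done by an elementary recursion on the degree $j=1,\dots,k$: the homogeneous degree-$j$ component of the composite $Q\circ P$ equals (after polarisation) $Q^{j}$ evaluated at $(Lh,\dots,Lh)$ plus a term depending only on $Q^{1},\dots,Q^{j-1}$ and $P^{1},\dots,P^{j}$; since $L$ is invertible we may choose $Q^{j}$ so as to cancel this already-determined term, the case $j=1$ forcing $Q^{1}=L^{-1}$. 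I expect this bookkeeping — pinning down the degree-$j$ component of a composition of polynomials cleanly enough to see that the recursion closes — to be the only delicate point.

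Finally, to conclude, for $w$ near $0$ I would put $h=\psi(w)$, so that $w=\varphi(h)=P(h)+R(h)$, $|h|\asymp|w|$, and $h\to 0$ as $w\to 0$. Then $\psi(w)-Q(w)=h-Q(\varphi(h))=\bigl(h-Q(P(h))\bigr)-\bigl(Q(P(h)+R(h))-Q(P(h))\bigr)$. The first bracket is $O(|h|^{k+1})$ by the construction of $Q$; for the second, the mean value theorem applied to the polynomial $Q$ gives $|Q(P(h)+R(h))-Q(P(h))|\le |R(h)|\sup\|DQ\|$, the supremum being over a bounded neighbourhood of $0$ in which $P(h)$ and $P(h)+R(h)$ both lie once $h$ is small, hence this term is $O(|R(h)|)=o(|h|^{k})$. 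Therefore $\psi(w)-Q(w)=o(|h|^{k})=o(|w|^{k})$, which says precisely that $\varphi^{-1}$ has a Taylor expansion of order $k$ at $0$, with $k$-jet $Q$. The ``in particular'' clause then follows immediately: if $N$ is the $\mathcal{L}^{n}$-null set of points at which $\varphi$ has no order-$k$ expansion, then $\varphi(N)$ is $\mathcal{L}^{n}$-null because $\varphi$ is locally Lipschitz, and $\varphi^{-1}$ has an order-$k$ Taylor expansion at every point of $V\setminus\varphi(N)$.
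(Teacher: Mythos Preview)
Your argument is correct and close in spirit to the paper's, but you obtain the candidate polynomial $Q$ by a different, more hands-on route. You build $Q$ recursively as the \emph{formal} inverse of $P$ up to order $k$, solving degree by degree for $Q^{j}$ so that $Q(P(h))-h$ has no terms of degree $\le k$; you then estimate $\psi(w)-Q(w)$ by splitting it into $h-Q(P(h))=O(|h|^{k+1})$ and a Lipschitz increment of $Q$. The paper instead observes that $P$ is a polynomial with $DP(0)=D\varphi(0)$ invertible, hence $P$ has a genuine $C^{\infty}$ local inverse $P^{-1}$; it first shows $\varphi^{-1}(h)-P^{-1}(h)=o(|h|^{k})$ directly from the identity $\varphi^{-1}(h)=P^{-1}\bigl(h-R(\varphi^{-1}(h))\bigr)$ and the bi-Lipschitz property, and only then defines $Q$ as the order-$k$ Taylor polynomial of $P^{-1}$. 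The two $Q$'s coincide, of course. The paper's approach trades your recursion bookkeeping for one extra appeal to the inverse function theorem (for $P$) plus Taylor's theorem; your approach is more explicit and self-contained but requires pinning down the degree-$j$ component of a polynomial composition. Both the final estimate and the ``in particular'' clause are handled the same way in both proofs.
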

\begin{proof}
Without loss of generality we may assume that $x=0$ and $y=\varphi(0)=0$. Write $\varphi(h)=P(h)+R(h)$, where $R(h)=o(|h|^k)$ and $P$ is a polynomial of degree less than or equal to $k$. We have
$$
h=\varphi(\varphi^{-1}(h))= P(\varphi^{-1}(h))+R(\varphi^{-1}(h)).
$$
Since $DP(0)=D\varphi(0)$ is invertible, there is an open neighborhood of $0$, which we may assume to be $U$, on which $P$ is a $C^{k-1}$ diffeomorphism, with an inverse denoted by $P^{-1}$. It follows that
$$
\varphi^{-1}(h)=P^{-1}(P(\varphi^{-1}(h)))=P^{-1}(h-R(\varphi^{-1}(h))),
$$
hence
$$
\varphi^{-1}(h)-P^{-1}(h)=P^{-1}(h-R(\varphi^{-1}(h)))-P^{-1}(h)= o(|h|^k),
$$
because $\varphi^{-1}$ and $P^{-1}$ are locally bi-Lipschitz, and $R(v)=o(|v|^k)$.
Now, $P^{-1}$ need not be a polynomial, but if we define $Q$ as the Taylor polynomial of order $k$ of $P^{-1}$ we then have
$$
P^{-1}(h)-Q(h)=o(|h|^k),
$$ 
and by summing the last two equations we obtain that
$$
\varphi^{-1}(h)-Q(h)=o(|h|^k),
$$
thus proving the first assertion of the Lemma. The second assertion is a consequence of the first one and of the fact that a $C^1$ diffeomorphism between open subsets of $\R^n$ maps $\mathcal{L}^{n}$-null sets onto $\mathcal{L}^{n}$-null sets.
\end{proof}

\begin{lemma}\label{inverses are OK2}
Let $k\geq 2$ be a positive integer and $\varphi:U\to V$ be a $C^{1}$ diffeomorphism between two open subsets of $\R^n$. Assume that $\varphi$  satisfies $$\limsup_{h\to 0}\frac{|\varphi(x+h)-P(h)|}{|h|^k}<\infty
$$ for a point $x\in U$ and a polynomial $P$ of degree up to $k-1$. Then the inverse $\varphi^{-1}$ satisfies
$$\limsup_{h\to 0}\frac{|\varphi^{-1}(y+h)-Q(h)|}{|h|^k}<\infty
$$ for $y=\varphi(x)$ and some polynomial $Q$ of order up to $k-1$.
\end{lemma}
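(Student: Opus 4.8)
The plan is to follow the proof of Lemma \ref{inverses are OK1} almost verbatim, replacing every $o(|h|^k)$ estimate by a bounded ($O(|h|^k)$) estimate and lowering the degrees of the polynomials involved from $k$ to $k-1$. As there, we may assume $x=0$ and $y=\varphi(0)=0$. Write $\varphi(h)=P(h)+R(h)$, where $\deg P\leq k-1$ and, by hypothesis, $|R(h)|\leq C|h|^k$ for all $h$ in some ball around $0$. In particular $R(0)=0$, so $P(0)=0$; and since $k\geq 2$ we have $R(h)=o(|h|)$, hence $\varphi$ is differentiable at $0$ with $D\varphi(0)=DP(0)$. Because $\varphi$ is a $C^1$ diffeomorphism, $DP(0)$ is invertible, so by the inverse function theorem $P$ restricts to a diffeomorphism of some neighbourhood of $0$ onto a neighbourhood of $0$, with a $C^{\infty}$ inverse $P^{-1}$ (recall $P$ is a polynomial). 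This is the only step requiring any care, and the only place where the assumption $k\geq 2$ enters.

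Next I would run the same identity as in Lemma \ref{inverses are OK1}: for $h$ small enough that $\varphi^{-1}(h)$ lies in the neighbourhood where $P^{-1}$ is defined (possible since $\varphi^{-1}$ is continuous and $\varphi^{-1}(0)=0$; note also that $h-R(\varphi^{-1}(h))=P(\varphi^{-1}(h))$ then lies in the domain of $P^{-1}$), from
\[
h=\varphi(\varphi^{-1}(h))=P(\varphi^{-1}(h))+R(\varphi^{-1}(h))
\]
we get $\varphi^{-1}(h)=P^{-1}\bigl(h-R(\varphi^{-1}(h))\bigr)$, and therefore
\[
\bigl|\varphi^{-1}(h)-P^{-1}(h)\bigr|=\bigl|P^{-1}\bigl(h-R(\varphi^{-1}(h))\bigr)-P^{-1}(h)\bigr|\leq \mathrm{Lip}(P^{-1})\,\bigl|R(\varphi^{-1}(h))\bigr|,
\]
using that $P^{-1}$ is locally Lipschitz. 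Since $\varphi^{-1}$ is also locally Lipschitz, $|\varphi^{-1}(h)|\leq L|h|$ near $0$, whence $|R(\varphi^{-1}(h))|\leq C|\varphi^{-1}(h)|^k\leq CL^k|h|^k$. Combining, $|\varphi^{-1}(h)-P^{-1}(h)|\leq \mathrm{Lip}(P^{-1})\,CL^k|h|^k=O(|h|^k)$.

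Finally, let $Q$ be the Taylor polynomial of order $k-1$ of $P^{-1}$ at $0$; then $\deg Q\leq k-1$ and, since $P^{-1}$ is of class $C^k$, Taylor's theorem gives $|P^{-1}(h)-Q(h)|=O(|h|^k)$. Adding the last two estimates yields $|\varphi^{-1}(h)-Q(h)|=O(|h|^k)$, that is,
\[
\limsup_{h\to 0}\frac{|\varphi^{-1}(y+h)-Q(h)|}{|h|^k}<\infty,
\]
which is the assertion. I do not anticipate any real obstacle here; the argument is a routine $O$-analogue of the previous lemma, the one mildly delicate point being the derivation of the invertibility of $DP(0)$ — needed to produce $P^{-1}$ — from the hypothesis together with $k\geq 2$.
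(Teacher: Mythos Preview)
Your argument is correct and is exactly the approach the paper indicates: replace every $o(|h|^k)$ in the proof of Lemma~\ref{inverses are OK1} by $O(|h|^k)$ and work with polynomials of degree $k-1$ instead of $k$. Your explicit verification that $DP(0)=D\varphi(0)$ is invertible (using $k\geq 2$ and the $C^1$-diffeomorphism hypothesis) is the one point that deserves spelling out, and you handle it correctly.
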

\begin{proof}
Replace $o(|h|^k)$ with $O(|h|^k)$, and $P$ with the Taylor polynomial of order $k-1$ of $\varphi$ at $x$, in the proof of Lemma \ref{inverses are OK1}.
\end{proof}

\begin{lemma}\label{composition is OK1}
Assume that $f:\R^d\to\R^n$ has a Taylor expansion of order $k$ at $x$, and that $g:\R^n\to\R^m$ has a Taylor expansion of order $k$ at $y=f(x)$.
Then $g\circ f$ has a Taylor expansion of order $k$ at $x$.
\end{lemma}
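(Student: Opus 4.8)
The plan is to reduce to $x=0$ and $y=f(0)=0$ and then manipulate the two Taylor expansions directly. Write $f(h)=P(h)+R(h)$, where $P$ is a polynomial of degree at most $k$ and $R(h)=o(|h|^k)$, and $g(v)=Q(v)+S(v)$, where $Q$ is a polynomial of degree at most $k$ and $S(v)=o(|v|^k)$. Since the numerators in the defining limits \eqref{definition of Taylor expansion} must vanish as the increment goes to $0$, both $P$ and $Q$ have zero constant term; in particular $P(h)=O(|h|)$ and hence $f(h)=O(|h|)$ as $h\to 0$.

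Next I would estimate $g(f(h))=Q(f(h))+S(f(h))$ term by term. For the second summand, since $f(h)=O(|h|)$ and $S(v)=o(|v|^k)$, a direct $\varepsilon$--$\delta$ argument gives $S(f(h))=o(|h|^k)$. For the first summand, write $Q(f(h))=Q(P(h)+R(h))$ and use the exact (finite) Taylor expansion of the polynomial $Q$ about $P(h)$:
$$
Q\bigl(P(h)+R(h)\bigr)-Q\bigl(P(h)\bigr)=\sum_{|\gamma|\geq 1}\frac{1}{\gamma!}\,D^{\gamma}Q\bigl(P(h)\bigr)\,R(h)^{\gamma}.
$$
Because $P(h)\to 0$ and the partial derivatives of $Q$ are continuous, the coefficients $D^{\gamma}Q(P(h))$ remain bounded near $0$, while $|R(h)^{\gamma}|\leq|R(h)|^{|\gamma|}=o(|h|^{k|\gamma|})=o(|h|^{k})$ for every $|\gamma|\geq 1$; summing the finitely many terms yields $Q(P(h)+R(h))=Q(P(h))+o(|h|^{k})$. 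Hence $g(f(h))=Q(P(h))+o(|h|^{k})$.

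Finally, $Q\circ P$ is an honest polynomial in $h$; let $T$ denote its order-$k$ Taylor polynomial at $0$, that is, the sum of its monomials of degree at most $k$. Then $Q(P(h))-T(h)$ is a sum of monomials of degree at least $k+1$, so $Q(P(h))-T(h)=O(|h|^{k+1})=o(|h|^{k})$. Combining everything, $g(f(h))=T(h)+o(|h|^{k})$ with $\deg T\leq k$, which is precisely the assertion that $g\circ f$ has a Taylor expansion of order $k$ at $x$; moreover $T$ represents $J^{k}(g\circ f)(x)$.

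I do not expect a genuine obstacle here: the only two points needing a little care are (i) checking that $f(h)=O(|h|)$, so that the outer remainder $S$ may be composed with $f$, and (ii) controlling the cross terms $D^{\gamma}Q(P(h))\,R(h)^{\gamma}$ via boundedness of the polynomial coefficients near $0$; both are routine estimates. It is perhaps worth remarking that no differentiability of $f$ or $g$ beyond the bare existence of the Taylor expansions is used, so the lemma really operates at the level of the classes $\mathcal{C}^{j}\mathcal{P}^{k}$ rather than $C^{k}$.
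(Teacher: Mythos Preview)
Your proof is correct and follows essentially the same approach as the paper's: reduce to $x=y=0$, write $g\circ f$ in terms of the two Taylor polynomials and their remainders, show the composed remainder is $o(|h|^{k})$, and then truncate the polynomial $Q\circ P$ at degree $k$. The only difference is expository: where the paper invokes the local Lipschitz property of the polynomials to bound $Q(P(h)+R(h))-Q(P(h))$, you spell this out via the finite Taylor expansion of $Q$ about $P(h)$, and you make the auxiliary observation $f(h)=O(|h|)$ explicit; both are entirely equivalent.
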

\begin{proof}
We may assume that $x=y=0$, and write $f=Q+S$, $g=P+R$, where $P$, $Q$ are polynomials of degree less than or equal to $k$, $S(h)=o(|h|^k)$, and $R(h)=o(|h|^k)$. Then, using that $P$ and $Q$ are locally Lipschitz, it is easy to check that
\begin{eqnarray*}
& g(f(h)) & = P(f(h))+R(f(h))\\
& & =P(Q(h)+S(h))+R(Q(h)+S(h))\\
& & =P(Q(h))+\left(P(Q(h)+S(h))-P(Q(h))\right)+R(Q(h)+S(h))\\
& & =P(Q(h)) +o(|h|^k)+o(|h|^k),
\end{eqnarray*}
that is
$$
g(f(h))-P(Q(h))=o(|h|^k).
$$
Although the polynomial $P\circ Q$ is, in general, of order greater than $k$, we may define $T$ as the Taylor polynomial of order $k$ of $P\circ Q$ at $0$, so that
$$
P(Q(h))-T(h)=o(|h|^k),
$$
and by summing the last two equalities we get $g(f(h))-T(h)=o(|h|^k)$.
\end{proof}
               
\begin{lemma}\label{composition is OK2}
Assume that $f:\R^d\to\R^n$ satisfies
$$
\limsup_{h\to 0}\frac{|f(x+h)-Q(h)|}{|h|^k}<\infty$$ 
for some $x\in\R^d$ and a polynomial $Q$ of degree up to $k-1$, and that $g:\R^n\to\R^m$ satisfies
$$
\limsup_{h\to 0}\frac{|g(y+h)-P(h)|}{|h|^k}<\infty$$ 
for $y=f(x)$ and a polynomial $P$ of degree up to $k-1$. Then 
$$
\limsup_{h\to 0}\frac{|(g\circ f)(x+h)-T(h)|}{|h|^k}<\infty,$$
where $T$ is the Taylor polynomial of order $k-1$ of $P\circ Q$.
\end{lemma}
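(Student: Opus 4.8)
The plan is to repeat, \emph{mutatis mutandis}, the proof of Lemma~\ref{composition is OK1}, replacing every occurrence of $o(|h|^k)$ by $O(|h|^k)$ --- that is, by a quantity bounded by a constant times $|h|^k$ on a fixed neighbourhood of the origin --- exactly as one passes from Lemma~\ref{inverses are OK1} to Lemma~\ref{inverses are OK2}. First I would translate in the source and in the target so as to assume $x=0$ and $y=f(0)=0$; note that the two hypotheses force $f$ to be continuous at $x$ with $Q(0)=f(x)$, and $g$ to be continuous at $y$ with $P(0)=g(y)$, so after translating we have $Q(0)=0$, the composition $P\circ Q$ makes sense (in accordance with the statement), and $(P\circ Q)(0)=g(f(0))=(g\circ f)(0)$.

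Next I would fix a bounded neighbourhood $U$ of $0$ on which, simultaneously, $|f(h)-Q(h)|\le C_1|h|^k$, $|g(z)-P(z)|\le C_2|z|^k$, $P$ is $L$-Lipschitz, and $|Q(h)|\le M|h|$ (possible because the first two bounds are the hypotheses, which also hold at the origin by the continuity just noted, and $P,Q$ are polynomials with $Q(0)=0$). Writing $f(h)=Q(h)+S(h)$ with $\|S(h)\|\le C_1|h|^k$ and $R:=g-P$, the key identity is
$$
g(f(h))=P\bigl(Q(h)+S(h)\bigr)+R\bigl(Q(h)+S(h)\bigr).
$$
Since $|Q(h)+S(h)|=O(|h|)$, the last term is at most $C_2|Q(h)+S(h)|^k=O(|h|^k)$; and since $Q(h)$ and $Q(h)+S(h)$ stay in $U$, the Lipschitz bound gives $\|P(Q(h)+S(h))-P(Q(h))\|\le L\|S(h)\|=O(|h|^k)$. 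Hence $g(f(h))=P(Q(h))+O(|h|^k)$.

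Finally, $P\circ Q$ is a polynomial, so letting $T$ be its Taylor polynomial of order $k-1$ at $0$, the difference $P(Q(h))-T(h)$ is a polynomial all of whose monomials have degree $\ge k$, hence is $O(|h|^k)$ near $0$; summing the two estimates yields $g(f(h))-T(h)=O(|h|^k)$, i.e.\ $\limsup_{h\to 0}|(g\circ f)(x+h)-T(h)|/|h|^k<\infty$, which is the claim. I do not anticipate any genuine obstacle here; the only point deserving attention is arranging that the four estimates above hold on one common neighbourhood of $0$, and checking that the argument $Q(h)+S(h)$ of $R$ does tend to $0$ --- both of which follow at once from $Q(0)=0$ together with $S(h)=o(|h|)$.
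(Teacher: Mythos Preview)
Your approach is exactly the paper's: its proof of this lemma is the one-line instruction to replace $o(|h|^k)$ by $O(|h|^k)$ and take $P,Q,T$ of degree at most $k-1$ in the proof of Lemma~\ref{composition is OK1}, which is precisely what you have spelled out in detail. One small correction: the $\limsup$ hypotheses alone do not force $Q(0)=f(x)$ or $P(0)=g(y)$ (the value of $f$ at the single point $x$ is invisible to a $\limsup$ as $h\to 0$); however, in every use of this lemma in the paper $Q$ and $P$ are Taylor polynomials of $f$ and $g$, so these identities hold by definition and your argument proceeds unchanged.
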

\begin{proof}
Replace $o(|h|^k)$ with $O(|h|^k)$, and $P$, $Q$, $T$ with polinomials of order $k-1$ in the proof of the preceding Lemma.
\end{proof}

\begin{lemma}\label{expansions in ae point imply expansions in ae in ae subspace}
Let $m,n,k, i$ be positive integers, with $i\leq n-1$. Assume that $f:\R^n\to\R^m$ has a Taylor expansion of order $k$ at $\mathcal{L}^{n}$-a.e. $x\in\R^n$. Then, for $\mathcal{L}^{i}$-a.e. $y\in\R^i$, the function $g_y:\R^{n-i}\to\R^m$ has a Taylor expansion of order $k$ at $\mathcal{L}^{n-i}$-a.e. $z\in\R^{n-i}$, where $g_{y}$ is defined by $g_{y}(z)=g(y,z)$.
\end{lemma}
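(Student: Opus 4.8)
The plan is to combine a slicing argument for Taylor polynomials with Fubini's theorem for null sets. First I would let $E\subset\R^n$ be the set of points at which $f$ fails to have a Taylor expansion of order $k$; by hypothesis $\mathcal{L}^n(E)=0$, so in particular $E$ is Lebesgue measurable.

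The key observation is that if $x=(y,z)\in\R^i\times\R^{n-i}$ does \emph{not} lie in $E$, then $f_y$ (that is, the function $z\mapsto f(y,z)$) has a Taylor expansion of order $k$ at $z$. Indeed, writing $P_x$ for the Taylor polynomial of $f$ at $x$, so that $f(x+h)-P_x(h)=o(|h|^k)$ as $h\to 0$ in $\R^n$, I would restrict to increments of the form $h=(0,h')$ with $h'\in\R^{n-i}$, obtaining $f_y(z+h')-P_x(0,h')=o(|h'|^k)$; since $h'\mapsto P_x(0,h')$ is a polynomial in $h'$ of degree at most $k$, this shows that $f_y$ has a Taylor expansion of order $k$ at $z$, with $k$-jet given by the homogeneous components of $h'\mapsto P_x(0,h')$. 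Consequently, for each fixed $y$, the set of those $z\in\R^{n-i}$ at which $f_y$ has no Taylor expansion of order $k$ is contained in the slice $E_y:=\{z\in\R^{n-i}:(y,z)\in E\}$.

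To finish, since $\mathcal{L}^n(E)=0$ and $E$ is measurable, Fubini's theorem gives $\mathcal{L}^{n-i}(E_y)=0$ for $\mathcal{L}^i$-a.e. $y\in\R^i$; for any such $y$ the set of $z$ where $f_y$ lacks an order-$k$ Taylor expansion is then $\mathcal{L}^{n-i}$-null, which is exactly the asserted conclusion. There is no serious obstacle here: the only point requiring a little care is the measurability needed to invoke Fubini, which is automatic because a set of Lebesgue outer measure zero is Lebesgue measurable, and the polynomial-restriction step is immediate once one observes that the restriction of a polynomial of degree $\le k$ to an affine subspace is again a polynomial of degree $\le k$.
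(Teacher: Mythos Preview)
Your proof is correct and follows essentially the same approach as the paper's: define the exceptional set, use Fubini's theorem to conclude that almost every slice is null, and observe that a Taylor expansion at $(y,z)$ restricts to a Taylor expansion of $g_y$ at $z$. The paper's proof is just a terser version of yours, omitting the details about restricting the polynomial and about measurability that you spell out.
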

\begin{proof}
Using the hypothesis and Fubini's theorem, for $\mathcal{L}^{i}$-a.e. $y\in\R^{i}$ we have that, for $\mathcal{L}^{n-i}$-a.e. $z\in \R^{n-i}$, the function $g$ has a Taylor expansion of order $k$ at $x=(y,z)$. This obviously implies that the function $g_{y}$ has a Taylor expansion of order $k$ at $z$, for such $y\in\R^{i}$, $z\in\R^{n-i}$.
\end{proof}

\medskip

\section{Proof of Theorem \ref{abstract theorem}}

Throughout this section we will assume that $f\in \mathcal{C}_{n,m}^{k}(U,\R^m)$, where $U\subseteq\R^n$ is open and $n>m\geq 1$ are positive integers, and $k$ will be defined by $k:=n-m+1$; note in particular that $k\geq 2$. We will also use $\mathcal{C}_{n,m}^{j}$ as an abbreviation for $\mathcal{C}_{n,m}^{j}(W,V)$ when the sets $W,V$ are understood.

Let $C=\{x\in U \, : \, \textrm{ rank}(Df(x))<m\}$ be the set of critical points of $f$, and set
$$
A_j=\{x\in C \, : \, D^{i}f(x)=0 \textrm{ for } 1\leq i\leq j\}, \, \, \, 1\leq j\leq n-m,
$$
and
$$
K=\{ x\in C: 1\leq \textrm{rank}(Df(x))\leq m-1\}.
$$
Then we have
\begin{equation}\label{decomposition Aj}
C=K\cup (A_1\setminus A_2)\cup (A_2\setminus A_3)\cup ...\cup (A_{n-m-1}\setminus A_{n-m})\cup A_{n-m}.
\end{equation}
The proof of Theorem \ref{abstract theorem} can be carried out in three steps following the general plan of \cite[Lemma 15.2]{AbrahamRobbin} and \cite[Theorem 5]{Figalli}. In the first step, by using conditions (MS4)-(MS7) and a standard argument going back to \cite{Sard} we show that it suffices to prove the theorem in the case $K=\emptyset$, that is, $C=\{x : Df(x)=0\}$. In the second step we prove that $\mathcal{L}^m(f(A_{n-m}))=0$. This is where our proof really differs from others; the key point is showing that for a function $\varphi:\R\to\R$ the set $\{t\in\R : (0,...,0, a_t)\in J^{k}\varphi(t) \textrm{ for some } a_t \neq 0\}$ is always of measure zero in $\R$. Finally, in the third step we see how, thanks to the Implicit Function Theorem and the Kneser-Glaeser Theorem, one can reduce the dimension from $n$ to $n-1$, which by an induction argument finishes the proof. This third step is almost identical to Step 3 in the proof of \cite[Theorem 5]{Figalli} or \cite[Lemma 15.2]{AbrahamRobbin}, but since the argument is short we will include its proof here for completeness (and for the benefit of those readers who might not yet be familiar with this scheme). 

\begin{claim}
{\bf First Step}. We may assume $K=\emptyset$.
\end{claim}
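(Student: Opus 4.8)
The idea is the classical Sard reduction: near a point $x_0\in K$ the rank of $Df$ is between $1$ and $m-1$, so after composing with suitable diffeomorphisms we can "straighten out" the first $i:=\mathrm{rank}(Df(x_0))$ coordinates of the target, exhibit $f$ locally as a map of the form $(y,z)\mapsto (y, g_y(z))$ with $g_y$ defined on a lower-dimensional slice, and apply an induction on $n$ (the base case $n=m$, or $n=m+1$, being handled directly, and the case $m=1$ having $K=\emptyset$ automatically). First I would cover $K$ by countably many such neighborhoods, so it suffices to treat one of them; fix $x_0\in K$, put $i=\mathrm{rank}(Df(x_0))\in\{1,\dots,m-1\}$, and after a linear change of coordinates in $\R^n$ and $\R^m$ assume that the $i\times i$ top-left block of $Df(x_0)$ is invertible. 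Writing $\R^n=\R^i\times\R^{n-i}$ and $f=(f_1,\dots,f_m)$, the map $\Phi(y,z)=(f_1(y,z),\dots,f_i(y,z),z)$ is, by the Inverse Function Theorem, a $C^{k-1}$ diffeomorphism from a neighborhood $W$ of $x_0$ onto an open set; note $k-1\geq 1$ and in fact $k-1\geq j-1$ with the relevant $j$, so this is legitimate within the class regularity. By (MS4) its inverse $\Phi^{-1}$ lies in $\mathcal{C}_{n,n}^{j}$, and by (MS5) the composition $F:=f\circ\Phi^{-1}$ lies in $\mathcal{C}_{n,m}^{j}$ on the appropriate open set. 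By construction $F(u,z)=(u, G(u,z))$ for some $G$ with values in $\R^{m-i}$, i.e. the first $i$ coordinates of $F$ are the identity.

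The point of this normal form is that a point $(u,z)$ is critical for $F$ exactly when $z$ is critical for the partial map $G_u(z)=G(u,z)$, and $F(C_F)\cap(\{u\}\times\R^{m-i})=\{u\}\times G_u(C_{G_u})$. By (MS6), for $\mathcal{L}^i$-a.e. $u$ the slice $G_u$ (which is the second component of $F_u$, and $F_u=(\mathrm{id},G_u)$) belongs to $\mathcal{C}_{n-i,m-i}^{j}$ — here one uses that $F\in\mathcal{C}_{n,m}^j$, applies (MS6) to the relevant component, and uses (MS7)/(MS1) to extract the $\R^{m-i}$-valued part and see it in the lower class; since $(n-i)-(m-i)+1=n-m+1=k$, the inductive hypothesis applies to $G_u$ with the same $k$. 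Thus $\mathcal{L}^{m-i}(G_u(C_{G_u}))=0$ for a.e. $u\in\R^i$, and Fubini's theorem gives $\mathcal{L}^m\big(F(C_F)\big)=0$ on this chart. Transporting back through the linear change of coordinates in $\R^m$ (which preserves Lebesgue-null sets) yields $\mathcal{L}^m(f(K\cap W))=0$; summing over the countable cover gives $\mathcal{L}^m(f(K))=0$. Hence in proving $\mathcal{L}^m(f(C_f))=0$ we may discard $K$ and assume $C=A_{n-m}=\{x: Df(x)=0\}$, i.e. $K=\emptyset$; the case $m=1$ needs no reduction at all since there $K=\emptyset$ trivially.

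**Main obstacle.** The delicate part is not the diffeomorphism bookkeeping but checking that the class-axiom machinery closes up under these operations at the right regularity: one must verify that $\Phi$ and $\Phi^{-1}$ genuinely qualify as objects to which (MS4) and (MS5) apply (they are only $C^{k-1}$, not $C^k$, so the "either $f$ is $C^j$ or $g$ is a diffeomorphism" clause of (MS5) is exactly what is needed, and one must confirm the index $j$ one is working with), and that the slicing in (MS6) together with the product axiom (MS7) really delivers $G_u$ in $\mathcal{C}_{n-i,m-i}^{j}$ rather than merely in $\mathcal{C}_{n-i,m}^{j}$. A secondary technical point is organizing the induction on $n$ (with $m$ varying) so that the inductive hypothesis is available precisely for the pair $(n-i,m-i)$ with $1\leq i\leq m-1$, and handling the base of the induction. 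None of these steps is deep, but they are where the argument must be pinned down carefully; the substantive new content of the paper lies in the Second Step.
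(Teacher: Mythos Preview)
Your approach is essentially the same as the paper's: the local rank-$i$ coordinate change $\Phi(y,z)=(f_1,\dots,f_i,z)$, the use of (MS4), (MS5), (MS7) to keep everything inside the classes $\mathcal{C}^{j}_{n,m}$, the slicing via (MS6), and the Fubini conclusion are all identical to what the paper does. Two small points: (i) the paper does not literally invoke an inductive hypothesis here but instead forward-references Steps~2 and~3 applied to the sliced map $g_{u}$ (noting that $Dg_{u}=0$ on the relevant set, so its $K$-part is empty) --- your induction on $n$ is an equivalent reorganization; (ii) at the end you write $C=A_{n-m}$, but after discarding $K$ one has $C=A_{1}=\{x:Df(x)=0\}$, not $A_{n-m}$.
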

\begin{proof}
Observe that $f$ is at least of class $C^1$, because $n>m$ and $\mathcal{C}_{n,m}^{k}=\mathcal{C}_{n,m}^{n-m+1}\subset C^{n-m}$, by (MS1). Define
$$
K_i=\{x\in\R^n : \textrm{ rank}(D(f(x))=i\}, \, \, \, 1\leq i\leq m-1.
$$
Take $\bar{x}\in K_i$. We may assume that
$$
\textrm{det}\left(\frac{\partial (f_1, ..., f_i)}{\partial (x_1, ..., x_i)}\right)(\bar{x})\neq 0.
$$
Then, in some relatively compact neighborhood $V$ of $\bar{x}$, we can take as coordinates $(y_1, ..., y_n)=Y(x)=(f_1(x), ..., f_i(x), x_{i+1}, ..., x_{n})$. Therefore, defining $X=Y^{-1}$, $f$ is of the form
$$
f(X(y))=(y_1, ..., y_i, g(y_1,..., y_n)).
$$
By using properties (MS1), (MS4), (MS6) and (MS7) of the classes $\mathcal{C}_{j,\ell}^{i}$, it is easy to see that 
$X\in \mathcal{C}_{n,n}^{k}$. Then, by using  property (MS5) we get $f\circ X\in\mathcal{C}_{n,m}^{k}$, and by using again (MS5) and (MS1) it is immediately clear that $g\in\mathcal{C}_{n, m-i}^{k}(\widetilde{V}, \R^{m-i})$, where $\widetilde{V}=Y(V)$. Moreover, in these new coordinates we have
$$
D(f\circ X)(y)=
    \begin{pmatrix}
      I_{i} & 0 \\
      \ast & D(g_{|_{(y_1, ..., y_i)}}) \;
    \end{pmatrix},
$$
where
$g_{|_{(y_1, ..., y_i)}}(y_{i+1},...,y_n) :=g(y_1, ..., y_n)$, and $\widetilde{V}_{(y_1,...,y_i)}=\{(z_1, ..., z_{n-i})\in\R^{n-i} : (y_1,..., y_i, z_1,...,z_{n-i})\in \widetilde{V}\}$.

Now, by using property (MS6) and observing that $\textrm{rank}(D(f\circ X))=\textrm{rank}(Df)=i$ on $K_i$, we have that, for $\mathcal{L}^{i}$-a.e. $(y_1, ..., y_i)$,
$$
g_{|_{(y_1,..., y_i)}}\in \mathcal{C}_{n-i, m-i}^{k},
$$
and
$$
D(g_{|_{(y_1, ..., y_i)}})=0 \textrm{ on } Y(K_i)\cap \widetilde{V}_{(y_1,...,y_i)}.
$$ 
Therefore, if we prove that
$f(A_1\setminus A_2)$, ..., $f(A_{n-m-1}\setminus A_{n-m})$, $f(A_{n-m})$ are of measure zero (which we will indeed do in the second and third steps), by applying that part of the proof to the function $g_{|_{(y_1, ..., y_i)}}$, we will get
$$
\mathcal{L}^{m-i}(g_{|_{(y_1, ..., y_i)}}(Y(K_i)\cap \widetilde{V}_{(y_1,...,y_i)}))=0 \textrm{ for } \mathcal{L}^{i}\textrm{-a.e. } (y_1, ..., y_i),
$$ 
and by Fubini's theorem we will conclude that $0=\mathcal{L}^{m}(f\circ X(\widetilde{V}\cap Y(K_i)))$ $=\mathcal{L}^{m}(f(V\cap K_i))$, for every $i\in\{1, ..., m-1\}$, hence that $\mathcal{L}^{m}(f(K))=0$ as well.
\end{proof}

Let us now proceed with the {\bf Second Step} of the proof of Theorem \ref{abstract theorem} and show that $\mathcal{L}^m(f(A_{n-m}))=0$.
We can distinguish two subsets of $A_{n-m}$, namely,
$$
C_{n-m}=\{x\in A_{n-m} \, : \,  (0,...,0)\in J^{k}f(x)\},
$$
and $$B_{n-m}=A_{n-m}\setminus C_{n-m}.$$ By Lemma \ref{points with 0 jets are good} we already know that $\mathcal{L}^m(f(C_{n-m}))=0$. Therefore it is enough to see that $\mathcal{L}^m(f(B_{n-m}))=0$. In turn, because $B_{n-m}\subset A_{n-m}$, and thanks to property (MS2) of the class $\mathcal{C}_{n,m}^{k}$, this will be established once we prove the following.

\begin{lemma}\label{Bn-1 is null}
$\mathcal{L}^n(B_{n-m})=0$.
\end{lemma}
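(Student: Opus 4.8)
The plan is to reduce the assertion, up to an $\mathcal{L}^n$-null set, to the set of points at which $f$ admits a $k$-th order Taylor expansion whose top-degree term does not vanish, and then to kill that set by slicing along lines in rational directions.

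First, recall $B_{n-m}=A_{n-m}\setminus C_{n-m}$ and that $A_{n-m}=\{x\in U: D^if(x)=0,\ 1\le i\le k-1\}$ is closed (here $k=n-m+1\ge 2$, $f\in C^{k-1}$). By (MS3) the set $\{x: J^kf(x)=\emptyset\}$ is $\mathcal{L}^n$-null; and at a point $x\in A_{n-m}$ where $J^kf(x)=\{P_x\}$ one has $P^i_x=\tfrac1{i!}D^if(x)=0$ for $1\le i\le k-1$ (uniqueness of Taylor polynomials, using $f\in C^{k-1}$), so $(0,\dots,0)\in J^kf(x)$ is equivalent to $P^k_x=0$. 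Hence
$$
B_{n-m}\subseteq\{x: J^kf(x)=\emptyset\}\cup E,\qquad E:=\{x\in A_{n-m}: J^kf(x)=\{P_x\},\ P^k_x\ne 0\},
$$
and it suffices to prove $\mathcal{L}^n(E)=0$. I would base this on the following one-dimensional fact, which I would prove first:

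\medskip
\noindent\textbf{Scalar claim.} For every $\varphi$ defined on an open subset of $\R$ and every integer $k\ge 2$, the set $S_\varphi:=\{t: J^k\varphi(t)=(0,\dots,0,a_t)\text{ with }a_t\ne 0\}$ is $\mathcal{L}^1$-null.

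\medskip
To prove the scalar claim, fix a bounded interval $I$; writing $t\in S_\varphi$ as $\varphi(t+s)-\varphi(t)=a_ts^k+o(|s|^k)$, I would decompose $S_\varphi=\bigcup_{A\in\Q\setminus\{0\},\,q\in\N}S_{A,q}$, where (for a small fixed $\varepsilon\in(0,1)$)
$$
S_{A,q}=\bigl\{t: |\varphi(t+s)-\varphi(t)-As^k|\le \varepsilon|A|\,|s|^k\text{ whenever }0<|s|\le\tfrac1q\bigr\},
$$
every $t\in S_\varphi$ belonging to some $S_{A,q}$ (choose $A$ close enough to $a_t$ and $q$ large), and then partition $I$ into finitely many intervals $I'$ of length $\le 1/q$. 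If $k$ is even, $s^k>0$ for $s\ne 0$, so for $t\in S_{A,q}$ the increment $\varphi(t+s)-\varphi(t)$ keeps the sign of $A$ for all small $s\ne 0$; hence $S_{A,q}\cap I'$ has at most one point. If $k$ is odd (so $k\ge 3$), assuming $A>0$ (else replace $\varphi$ by $-\varphi$), take $t_1<t_2<t_3$ in $S_{A,q}\cap I'$ and set $r=\tfrac{t_3-t_1}2$, $m=\tfrac{t_1+t_3}2$, $\delta=t_2-m\in(-r,r)$. Since all pairwise distances are $\le 2r\le1/q$, evaluating the defining inequality at $t_1$ (with $s=2r$) and at $t_2$ (with $s=r-\delta>0$ and $s=-(r+\delta)<0$) gives
$$
(1-\varepsilon)A\,(2r)^k\le \varphi(t_3)-\varphi(t_1)=\bigl(\varphi(t_3)-\varphi(t_2)\bigr)+\bigl(\varphi(t_2)-\varphi(t_1)\bigr)\le (1+\varepsilon)A\bigl((r-\delta)^k+(r+\delta)^k\bigr).
$$
Since $(r-\delta)^k+(r+\delta)^k=2r^k\bigl(1+\binom k2(\delta/r)^2+\cdots\bigr)\le 2r^k\bigl(1+(2^{k-1}-1)(\delta/r)^2\bigr)$, this forces $(\delta/r)^2\ge\bigl(\tfrac{1-\varepsilon}{1+\varepsilon}2^{k-1}-1\bigr)/(2^{k-1}-1)$, whose right-hand side tends to $1$ as $\varepsilon\to0$; choosing $\varepsilon$ once and for all small enough (depending on $k$) we obtain $|t_2-m|=|\delta|\ge \alpha(t_3-t_1)$ for some constant $\alpha=\alpha(k)\in(0,\tfrac12)$. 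Thus in both parities $S_{A,q}\cap I'$ satisfies the hypothesis of Lemma~\ref{Cantor} with this $\alpha$, so $\mathcal{L}^1(S_{A,q}\cap I')=0$; summing over the countably many $(A,q)$ and finitely many $I'$ proves the claim.

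Finally, to obtain $\mathcal{L}^n(E)=0$: for $x\in E$ some component of the nonzero homogeneous polynomial $P^k_x:\R^n\to\R^m$ is a nonzero polynomial, which cannot vanish identically on $\Q^n$, so there is $v\in\Q^n$ with $P^k_x(v)\ne0$; hence $E=\bigcup_{v\in\Q^n}E_v$ with $E_v:=\{x\in E: P^k_x(v)\ne0\}$. Fixing $v\in\Q^n\setminus\{0\}$ and writing $\R^n=\R v\oplus H$ for a complementary hyperplane, Fubini's theorem reduces $\mathcal{L}^n(E_v)=0$ to showing that for each $w\in H$ the slice $\{t: w+tv\in E_v\}$ is $\mathcal{L}^1$-null. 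If $x=w+t_0v\in E_v$, then restricting the $k$-jet of $f$ at $x$ to the line $\tau\mapsto x+\tau v$ and using $P^1_x=\dots=P^{k-1}_x=0$ gives $f(x+\tau v)=f(x)+\tau^kP^k_x(v)+o(|\tau|^k)$; hence $g_w(t):=f(w+tv)$ has $J^kg_w(t_0)=(0,\dots,0,P^k_x(v))$ with $P^k_x(v)\ne0$, so choosing a coordinate $j$ with $(P^k_x(v))_j\ne0$ we get $t_0\in S_{g_{w,j}}$, which is $\mathcal{L}^1$-null by the scalar claim. Therefore the slice is null, $\mathcal{L}^n(E_v)=0$, and $\mathcal{L}^n(E)\le\sum_{v\in\Q^n}\mathcal{L}^n(E_v)=0$, so $\mathcal{L}^n(B_{n-m})=0$. (No measurability difficulty arises: $A_{n-m}$ is closed and on it $P^k_x(v)=\lim_{\tau\to0}\tau^{-k}(f(x+\tau v)-f(x))$ whenever this limit exists, so each $E_v$ sits inside a Borel set to which Fubini applies.) The step I expect to be the main obstacle is precisely the odd-$k$ estimate above, which turns each $S_{A,q}$ (on a short interval) into a set with the ``hollow middle'' spreading property required by Lemma~\ref{Cantor}; once the scalar claim is available, the decomposition of $B_{n-m}$, the reduction to one dimension, and the Fubini argument over rational directions are all routine.
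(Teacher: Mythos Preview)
Your proof is correct and follows the same route as the paper: reduce via (MS3) to points with nonvanishing top-degree Taylor term, slice along lines in a countable family of directions (you use $\Q^n$ and coordinate projections, the paper uses dense sequences in $\mathbb{S}^{n-1}$ and $\mathbb{S}^{m-1}$), and prove the resulting one-dimensional scalar claim by splitting on the parity of $k$ and invoking Lemma~\ref{Cantor} in the odd case. The only notable difference is in the odd-$k$ estimate: your three-point inequality $(r-\delta)^k+(r+\delta)^k\le 2r^k\bigl(1+(2^{k-1}-1)(\delta/r)^2\bigr)$ handles all odd $k\ge 3$ uniformly and yields an $\alpha$ close to $\tfrac12$, whereas the paper derives the cruder exclusion $z\notin(y/3,2y/3)$ from $1>6(2/3)^k$ for $k\ge 5$ and then treats $k=3$ by a separate direct computation.
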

\begin{proof}
Property (MS3) tells us that $$\mathcal{L}^n\left(\{x\in\R^n : J^{k}f(x)=\emptyset\}\right)=0.$$
Therefore we may assume that
$$
B_{n-m}\subset\{x\in A_{n-m} : J^{k}f(x)\neq\emptyset\}.
$$
Since $C_{n-m}$ is disjoint with $B_{n-m}$ we then have that, for every $x\in B_{n-m}$,
$$
J^{k}f(x)=(0, ..., 0, P_{x}^k) \textrm{ for some } P_{x}^k\neq 0.
$$
Let $\{v_j\}_{j=1}^{\infty}$ and $\{w_{j}\}_{j=1}^{\infty}$ be dense sequences in the unit spheres $\mathbb{S}^{n-1}$ and $\mathbb{S}^{m-1}$ of $\mathbb{R}^n$ and $\mathbb{R}^{m}$, respectively. Define, for every $(\alpha, \beta)\in\N\times \N$,
$$
E_{\alpha \beta}^{+}=\{x\in B_{n-m} : \langle P_{x}^{k}(v_{\alpha}), w_{\beta}\rangle >0\},
$$ 
and
$$
E_{\alpha \beta}^{-}=\{x\in B_{n-m} : \langle P_{x}^{k}(v_{\alpha}), w_{\beta}\rangle <0\}.
$$ 
If $x\in B_{n-m}$ then $P_{x}^k(v)\neq 0$ for some $v\in\mathbb{S}^{n-1}$, and by continuity of $P_{x}^k$ and density of $\{v_j\}_{j=1}^{\infty}$, $\{w_j\}_{j=1}^{\infty}$ there exist $\alpha, \beta\in\N$ such that $\langle P_{x}^n(v_{\alpha}), w_{\beta}\rangle \neq 0$, and therefore $x\in E_{\alpha \beta}^{+}\cup E_{\alpha \beta}^{-}$. This shows that
$$
B_{n-m}=\bigcup_{\alpha, \beta\in\N}\left(E_{\alpha \beta}^{+}\cup E_{\alpha \beta}^{-}\right).
$$
Thus it is sufficient to see that $\mathcal{L}^n(E_{\alpha \beta}^{+})=0$ and $\mathcal{L}^n(E_{\alpha \beta}^{-})=0$ for each $(\alpha, \beta)\in\N\times\N$. Let us fix $\alpha, \beta\in\N$ and see that $\mathcal{L}^n(E_{\alpha \beta}^{+})=0$ (the proof that $\mathcal{L}^n(E_{\alpha \beta}^{-})=0$ is completely analogous). 

Let $[v_\alpha]^{\perp}$ stand for the orthogonal complement of the line spanned by the direction $v_\alpha$. For each $y\in [v_\alpha]^{\perp}$ let us define
\begin{eqnarray*}
& & R_{y}=\{y+tv_\alpha : t\in\R\}, \\
& & G_{y}=R_{y}\cap E_{\alpha \beta}^{+},
 \textrm{ and } \\
& & \widetilde{G_y}=\{t\in\R : y+tv_\alpha\in G_y\}.
\end{eqnarray*}
We will show that $\mathcal{L}^{1}(\widetilde{G_y})=0$. Once we have checked this, by applying Fubini's theorem we will immediately deduce that 
$\mathcal{L}^n(E_{\alpha \beta}^{+})=0$ (note that we may indeed apply Fubini's theorem because the sets $E_{\alpha \beta}^{\pm}$ are measurable; indeed, $A_{n-m}$ is closed, and we can write 
$$
C_{n-m}=\bigcap_{i=1}^{\infty}\bigcup_{j=1}^{\infty}\bigcap_{|h|\leq 1/j}\{x\in A_{n-m}: |f(x+h)-f(x)|\leq\frac{1}{i}|h|^k\},
$$
so that $C_{n-m}$ is  a countable intersection of $F_{\sigma}$ sets, hence $B_{n-m}$ is measurable,
and 
$$
E_{\alpha \beta}^{+}= B_{n-m}\cap
\left( \bigcup_{i=1}^{\infty}\bigcup_{j=1}^{\infty}\bigcap_{0\leq t\leq  1/j}
\{x : \langle f(x+tv_{\alpha}), w_{\beta}\rangle-\langle f(x), w_{\beta}\rangle \geq \frac{1}{i} t^k\}  \right),
$$
so that $E_{\alpha \beta}^{+}$, being the intersection of $B_{n-m}$ with an $F_{\sigma}$ set, is measurable as well. Similarly, $E_{\alpha \beta}^{-}$ is measurable).

So let us fix $y\in [v_\alpha]^{\perp}$, and consider the auxiliary function $\varphi:\R\to\R$ defined by 
$$
\varphi(t)=\langle f(y+tv_\alpha), w_{\beta}\rangle.
$$
For each $t\in \widetilde{G_y}$, recalling that $(0, ..., 0, P_{y+t v_{\alpha}})\in J^{k}f(y+t v_{\alpha})$, we have
$$
J^{k}\varphi(t)=\left( 0, ..., 0, \langle P_{y+tv_\alpha}^{k}(v_\alpha), w_{\beta}\rangle\right).
$$
In particular, for every $t\in \widetilde{G_y}$ we have $$J^k\varphi(t)=(0, ..., 0, a_t) \textrm{ for some } a_t>0.$$ 

Let us now distinguish two cases.

\noindent{\bf Case 1. Assume first that $k$ is even.}  In this case we will be able to show that in fact $\widetilde{G_y}$ is (at most) countable. In order to do so, let us define, for every $i,j, \ell \in\N$, the sets
$$
D_{\ell}=\{t\in \widetilde{G_y} : J^{k}\varphi(t)= (0,...,  0, a_t), \, a_t\geq\frac{1}{\ell}\}.
$$
and
$$
G_{i j \ell}=\{t\in \widetilde{G_y} \, : \, \varphi(t+h)-\varphi(t)-\frac{1}{\ell}h^k\geq -\frac{1}{j}h^k \textrm{ whenever } |h|\leq \frac{1}{i}\}.
$$
We obviously have $\widetilde{G_y}=\bigcup_{\ell=1}^{\infty}D_{\ell}$, hence we only have to check that $D_{\ell}$ is countable for each $\ell\in\N$.
Notice that
$$
D_{\ell}\subseteq \bigcap_{j=1}^{\infty}\left(\bigcup_{i=1}^{\infty}G_{ij\ell}\right)
$$
(indeed, for every $t\in D_{\ell}$ and for every $j\in\N$, since $\lim_{h\to 0}\frac{\varphi(t+h)-\varphi(t)-a_t h^k}{h^k}=0$ there exists $i\in\N$ so that $|\varphi(t+h)-\varphi(t)-a_t h^k|\leq h^k/j$
if $|h|\leq 1/i$. Hence, using that $a_t\geq 1/\ell$ because $t\in D_{\ell}$, we have
$$
\frac{1}{\ell}h^k\leq a_t h^k\leq \varphi(t+h)-\varphi(t)+\frac{1}{j}h^k \textrm{ for } |h|\leq 1/i,
$$
and in particular $t\in G_{ij\ell}$). 

Now, if $t,s\in G_{ij\ell}$ and $|s-t|\leq 1/i$ then we have
$$
\varphi(t+s-t)-\varphi(t)-\frac{1}{\ell}|t-s|^k\geq -\frac{1}{j}|t-s|^k
$$
and
$$
\varphi(s+t-s)-\varphi(s)-\frac{1}{\ell}|t-s|^k\geq -\frac{1}{j}|t-s|^k.
$$
By summing the last two inequalities we get
$$
-\frac{2}{\ell}|t-s|^k\geq -\frac{2}{j}|t-s|^k,
$$
and for $j>\ell$ this is possible only if $t=s$. Therefore, if $j>\ell$, the set $G_{ij\ell}$ contains only isolated points (as the distance between two of its points must be greater than $1/i$), and in particular  is countable. Then, if we set $j=2\ell$ for instance, it follows that
$$
D_{\ell}\subset \bigcup_{i=1}^{\infty}G_{i (2\ell) \ell}
$$ is countable.

\medskip

\noindent{\bf Case 2. Assume now that $k$ is odd.} Let $\{r_{\ell}\}_{\ell\in\N}$ be an enumeration of the positive rational numbers, and for each $i, j, \ell\in\N$ define functions
$$
g_{\ell,j}(h)=r_{\ell}h^k-\frac{2}{j}|h|^k,
$$
and sets
$$
D_{ij\ell}=\{t\in\widetilde{G_y} : \varphi(t+h)-\varphi(t)\geq g_{\ell,j}(h) \textrm{ if } |h|\leq \frac{1}{i}\}.
$$
For each $t\in\widetilde{G_y}$ we can find $\ell, j\in\N$ such that
$$
a_{t}\in [r_{\ell}-\frac{1}{j}, r_{\ell}+\frac{1}{j}], \, \textrm{ and } \, \frac{1}{j}\leq\frac{r_{\ell}}{4}.
$$
For these $j, \ell$, because $\lim_{h\to 0}\frac{\varphi(t+h)-\varphi(t)-a_{t}h^k}{|h|^k}=0$, there exists $i\in\N$ such that if $|h|\leq 1/i$ then
$$
\varphi(t+h)-\varphi(t)-a_{t}h^k\geq -\frac{1}{j}|h|^k.
$$
For $0\leq h\leq 1/i$ we then have
$$
\varphi(t+h)-\varphi(t)\geq a_{t}h^k-\frac{1}{j}|h|^k \geq (r_{\ell}-\frac{1}{j})h^k-\frac{j}{j}|h|^k=g_{\ell,j}(h),
$$
and for $-1/i\leq h\leq0$ we get
$$
\varphi(t+h)-\varphi(t)\geq a_{t}h^k-\frac{1}{j}|h|^k \geq (r_{\ell}+\frac{1}{j})h^k-\frac{1}{j}|h|^k=g_{\ell,j}(h).
$$
In either case we obtain $t\in D_{ij\ell}$. This shows that
$$
\widetilde{G_y}\subset\bigcup_{i,j,\ell\in\N, 4<j r_{\ell}}D_{i j \ell}.
$$
Hence it is enough to see that $\mathcal{L}^{1}(D_{ij\ell})=0$ for every $i,j,\ell\in\N$ with $4< j r_{\ell}$. To this end we can further assume without loss of generality that $D_{ij\ell}$ is bounded, and then
it will sufficient to show the following.
\begin{claim}
Let $I$ be a bounded interval in $\R$, let $k\in\N$ be odd, $c, \varepsilon$ be positive numbers with $2\varepsilon<c$, $\varphi:I\to \mathbb{R}$ be a function, and $D$ be a subset of $I$. Suppose that 
$$
\varphi(y)\geq \varphi(x)+c(y-x)^k-\varepsilon |y-x|^k
$$
for every $x,y\in D$. Then
$\mathcal{L}^{1}(D)=0$.
\end{claim}
\noindent In order to prove the Claim we use Lemma \ref{Cantor}. To check that $D$ has the property in the statement of Lemma \ref{Cantor}, take $x,y\in D$ and assume without loss of generality that $0=x<y$ and $\varphi(x)=0$. We have
$$
\varphi(y)\in \bigl( (c-\varepsilon )y^k,(c+\varepsilon )y^k\bigr) \subset \bigl( \frac{c}{2}y^k,\frac{3c}{2}y^k \bigr).
$$
If $z\in (x,y)\cap D$, we also have
$$
\varphi(z)\in \bigl( (c-\varepsilon )z^k,(c+\varepsilon )z^k\bigr) \subset \bigl( \frac{c}{2}z^k,\frac{3c}{2}z^k \bigr).
$$
On the other hand,
$$
\varphi(z)\geq \varphi(y)+c(z-y)^k-\varepsilon (y-z)^k\geq \frac{c}{2}y^k+(c+\varepsilon )(z-y)^k\geq
$$
$$
\geq \frac{c}{2}y^k+\frac{3c}{2}(z-y)^k,
$$
which implies
$$
\frac{c}{2}y^k+\frac{3c}{2}(z-y)^k\leq \varphi(z)\leq \frac{3c}{2}z^k,
$$
hence also
$$
y^k\leq 3((y-z)^k+z^k),
$$
which in turn implies
$$
z\not\in \bigl( \frac{y}{3},\frac{2y}{3}\bigr),
$$
because if $z\in ( \frac{y}{3},\frac{2y}{3})$ then 
$$
y^k\leq 3\bigl( (\frac{2}{3})^k y^k+(\frac{2}{3})^k y^k\bigr) =6(\frac{2}{3})^k y^k,
$$
and consequently 
$$
1\leq 6(\frac{2}{3})^k,
$$ 
which is impossible for $k\geq 5$; on the other hand, for $k=3$ one easily checks that $z\not\in (\frac{y}{3},\frac{2y}{3})$ by a straightforward calculation. So we may apply Lemma \ref{Cantor} to conclude the proof of the Claim. The proof of Lemma \ref{Bn-1 is null} is complete.
\end{proof}

\medskip 

Let us now finally make the {\bf Third Step} of the proof of Theorem \ref{abstract theorem}. Note that the above argument already proves the Theorem for $n=m+1$ (or equivalently $k=2$). This allows us to start an induction argument on the dimension $n$. Assuming that the result is true for functions $f$ in the class $\mathcal{C}_{n-1,m}^{n-m}$ with $K=\emptyset$, we have to check that it is also true for functions $f$ in the class $\mathcal{C}_{n,m}^{n-m+1}$ with $K=\emptyset$. By the second step we know that $\mathcal{L}^{m}(f(A_{n-m}))=0$. Therefore, bearing in mind equation $(\ref{decomposition Aj})$, we only have to show the following.
\begin{claim}
$\mathcal{L}^m(f(A_{s-1}\setminus A_{s}))=0$ for $2\leq s\leq k-1$.
\end{claim}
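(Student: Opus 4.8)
The plan is to carry out the classical dimension-reduction step of the Morse--Sard proof (exactly as in Step 3 of \cite[Theorem 5]{Figalli} and \cite[Lemma 15.2]{AbrahamRobbin}): near a point of $A_{s-1}\setminus A_s$ the set $A_{s-1}$ is confined to an $(n-1)$-dimensional submanifold, along which one reduces to dimension $n-1$, the inevitable loss of regularity of that submanifold being repaired by the Kneser--Glaeser Theorem. First I would note that if $k=2$ the range $2\le s\le k-1$ is empty, so I may assume $k\ge 3$, equivalently $n-1\ge m+1$, so that dimension $n-1$ is still covered by the induction on $n$ started above. Fix $s$ with $2\le s\le k-1$ and set $q:=s-1$, so that $1\le q\le k-2$.

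Next, let $\bar x\in A_{s-1}\setminus A_s=A_q\setminus A_{q+1}$. Since $\bar x\in A_q$ and (by the First Step) $K=\emptyset$, we have $D^{i}f(\bar x)=0$ for $1\le i\le q$ while $D^{q+1}f(\bar x)\neq 0$; hence some $q$-th order partial derivative $w$ of some component of $f$ satisfies $Dw(\bar x)\neq 0$. Since $f\in C^{k-1}$ by (MS1), the function $w$ is of class $C^{k-1-q}$, with $k-1-q\ge 1$ because $q\le k-2$, and $w$ vanishes identically on $A_q$ (as all $q$-th order derivatives of $f$ do), so $A_q\subseteq\{w=0\}$. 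After a permutation of coordinates I may assume $\partial w/\partial x_n(\bar x)\neq 0$; the Implicit Function Theorem then provides a neighborhood $V\subseteq U$ of $\bar x$, an open set $\Omega\subseteq\R^{n-1}$, and a $C^{k-1-q}$ embedding $h:\Omega\to V$ of the form $h(u)=(u,\psi(u))$ whose image equals $M:=\{x\in V:w(x)=0\}$; in particular $A_q\cap V\subseteq M$.

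Here lies the main obstacle and the only nontrivial point: the straightening map $h$ is merely $C^{k-1-q}$, so the naive restriction $f\circ h$ need not be regular enough to belong to $\mathcal{C}_{n-1,m}^{k-1}$, and the induction hypothesis cannot be applied to it directly. This is repaired by the Kneser--Glaeser Theorem \ref{Kneser Glaeser}: since $f|_{V}$ is of class $C^{k-1}$ and $q$-flat on the relatively closed set $A:=A_q\cap V$, and since $h$ is of class $C^{(k-1)-q}$ with $h\big(h^{-1}(A_q\cap V)\big)=A_q\cap V\subseteq A$, that theorem (applied with $r=k-1$ and flatness order $q$) produces a map $H:\Omega\to\R^m$ of class $C^{k-1}$ which coincides with $f\circ h$ on $A^{*}:=h^{-1}(A_q\cap V)$ and is $q$-flat there, that is, $D^{i}H=0$ on $A^{*}$ for $1\le i\le q$.

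Finally, since $k-1=(n-1)-m+1$ and, by (MS1), $H\in C^{k-1}(\Omega,\R^m)\subseteq\mathcal{C}_{n-1,m}^{k-1}(\Omega,\R^m)$, the induction hypothesis (which, combined with the First Step applied in dimension $n-1$, gives the full conclusion of Theorem \ref{abstract theorem} there) yields $\mathcal{L}^{m}(H(C_H))=0$, where $C_H=\{u\in\Omega:\textrm{rank}(DH(u))<m\}$. As $q\ge 1$ we have $DH=0$ on $A^{*}$, so $A^{*}\subseteq C_H$, whence
\[
\mathcal{L}^{m}\big(f(A_q\cap V)\big)=\mathcal{L}^{m}\big(f(h(A^{*}))\big)=\mathcal{L}^{m}\big(H(A^{*})\big)\le\mathcal{L}^{m}\big(H(C_H)\big)=0 .
\]
Covering $A_{s-1}\setminus A_s$ by countably many neighborhoods $V$ of the above form (possible since it is a subset of the second-countable space $\R^n$) and using countable subadditivity of $\mathcal{L}^{m}$, I conclude that $\mathcal{L}^{m}(f(A_{s-1}\setminus A_s))=0$, as claimed. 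The remaining verifications (measurability of $A_q$ and $A_{q+1}$, immediate since $f\in C^{k-1}$, and the countable covering) are routine.
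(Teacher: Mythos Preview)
Your proof is correct and follows essentially the same approach as the paper's: local reduction of $A_{s-1}$ to an $(n-1)$-dimensional graph via the Implicit Function Theorem applied to a nonvanishing $(s-1)$-st order derivative, followed by the Kneser--Glaeser Theorem (with $r=k-1$ and flatness order $s-1$) to obtain a $C^{k-1}$ map $H$ on $\Omega\subset\R^{n-1}$, and finally the induction hypothesis together with (MS1). Your version is slightly more explicit about the verifications (e.g.\ that $k\ge 3$, that $h$ is of class $C^{k-s}=C^{(k-1)-(s-1)}$, and the countable covering), but the argument is the same.
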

\begin{proof}
Fix a point $x\in A_{s-1}\setminus A_{s}$. It is sufficient to see that there exists an open neighborhood $V$ of $x$ such that $\mathcal{L}^{m}(f((A_{s-1}\setminus A_{s})\cap V))=0$. By the standing hypotheses, $f$ is of class $C^{k-1}$, and $f$ is $(s-1)$-flat at $x$, but some partial derivative of order $s$ of $f$ is not zero. We may assume for instance that
$$\frac{\partial w}{\partial x_n}(x)\neq 0,$$ where $$w(x)=\partial^{\alpha}f(x)=0$$ and $\alpha$ is a multi-index with $|\alpha|=s-1$. Since $w$ is of class $C^{k-s}$, by the Implicit Function Theorem there exists an open neighborhood $V$ of $x$ such that $V\cap\{w=0\}$ is an $(n-1)$-dimensional graph of class $C^{k-s}$. In particular there exist an open subset $W$ in $\R^{n-1}$ and a function $g:W\to \R^n$ of class $C^{n-s}$ such that $V\cap A_{s-1}\subset g(W)$.

Now let us consider $A^{*}:=\{z\in W : g(z)\in A_{s-1}\}$. By Theorem \ref{Kneser Glaeser}, there exists $F:W\to\R^m$ of class $C^{k-1}$ such that
$F=f\circ g$ on $A^{*}$, and $DF(x)=0$ on $A^{*}$. In particular we have
$f(A_{s-1}\cap V)\subset F(C_{F}\cap W)$, where $C_{F}$ stands for the critical set of $F$. But, since $F$ is defined on an open subset of $\R^{n-1}$ and is of class $C^{k-1}$, and by property (MS1) we have $C^{k-1}\subset \mathcal{C}_{n-1,m}^{k-1}=\mathcal{C}_{n-1,m}^{n-m}$, we may use the induction hypothesis to conclude that $\mathcal{L}^{m}(F(C_{F}\cap W))=0$, and therefore $\mathcal{L}^m(f(A_{s-1}\cap V))=0$ as well. The proof is complete.

Finally, notice that conditions (MS4)-(MS7) are used only in the First Step of the above proof. Since in the special case $m=1$ we always have $K=\emptyset$, the First Step of the proof may be omitted. Therefore it is not necessary to define the classes $C^{j}_{n,1}(U,V)$ for $j\neq n$, and the conditions (MS1) for $j=n$, (MS2) and (MS3) alone are sufficient to ensure that $\mathcal{L}^{1}(f(C_f))=0$ for every $f\in \mathcal{C}_{n,1}^{n}(U,V)$.
\end{proof}

\begin{remark}\label{simplicity}
{\em It is worth noting that if we replace condition $(MS3)$ with
\begin{itemize}
\item[{(MS3')}] if $n\geq m+1$ then every $f\in\mathcal{C}_{n,m}^{n-m+1}(U,V)$ is $n-m+1$ times differentiable at almost every $x\in\R^n$,
\end{itemize} 
then there is no need to consider Case 2 of the proof of Lemma \ref{Bn-1 is null}, and we obtain a simpler proof of an easier variant of Theorem \ref{abstract theorem} which still is powerful enough to imply Theorems \ref{De Pascale} and \ref{Stepanov meets Morse-Sard}. The changes one has to make in order to obtain this simpler proof are as follows: observe that if $f$ is $k$ times differentiable at a point $x$ then $D^{k-2}f$ is twice differentiable at $x$ and in particular has a Taylor expansion of order $2$ at $x$; then apply the proof of Case 1 of Lemma \ref{Bn-1 is null} to the function 
$$
\varphi(t)=\langle D^{k-2}f(y+tv_\alpha)(v_{\alpha}^{k-2}), w_{\beta}\rangle = D^{k-2}\langle f, w_{\beta}\rangle (y+t v_\alpha)(v_{\alpha}^{k-2})
$$
(where now $k$ is not necessarily even). We have  $$J^2\varphi(t)=(0, a_t) \textrm{ for some } a_t>0 \textrm{ for every }t\in \widetilde{G_y},$$ hence a particular case of that proof allows us to deduce that $\widetilde{G_y}$ is countable.}
\end{remark}

\medskip

\section{Proof of Theorem \ref{general theorem}}

The very particular case $n=m$ in Theorem \ref{general theorem} is well known, see \cite[Proposition 3.6]{Howard} for instance. Now, in order to prove Theorem \ref{general theorem} in the case $n>m$, we define, for each $U$ open in $\R^i$ and $V$ open in $\R^j$ with $i\geq j$, the class  $\mathcal{C}_{i,j}^{s}(U,V)$ as the set of all functions $f:U\to V$ such that:
\begin{enumerate}
\item $f\in C^{s-1}(U, V)$;
\item $\limsup_{h\to 0}\frac{|f(x+h)-f(x)-Df(x)(h) - ... - \frac{1}{(s-1)!} D^{s-1}f(x)(h^{s-1})|}{|h|^s}<\infty$ for every $x\in\R^n$; 
\item $f$ has a Taylor expansion of order $s$ at almost every $x\in U$.
\end{enumerate}
Note that, by results of Liu and Tai \cite{LiuTai}, every function $f$ satisfying $(2)$ also satisfies $(3)$.
By using Lemmas \ref{n null sets are mapped onto m null sets}, \ref{inverses are OK1}, \ref{inverses are OK2}, \ref{composition is OK1}, \ref{composition is OK2}, and \ref{expansions in ae point imply expansions in ae in ae subspace}, and bearing in mind that diffeomorphisms map (Lebesgue) null sets onto null sets, it is easy to verify that the classes $\mathcal{C}_{i,j}^{s}$ satisfy the properties (MS1)--(MS7) of Theorem \ref{abstract theorem}. Then Theorem \ref{general theorem} for $n>m$ follows at once.

\medskip

\section{Examples and relations to other work}

Let us begin by briefly explaining how De Pascale's Theorem \ref{De Pascale}  also follows, with little effort, from Theorem \ref{abstract theorem}.  In the case $n>m$, we may define, for each $U$ open in $\R^i$ and $V$ open in $\R^j$, with $i\geq j$, $s\geq 1$, the classes $\mathcal{C}_{i,j}^{s}(U,V):=W^{s,p}_{\textrm{loc}}(U,V)$. By using standard results and techniques of Sobolev space theory (Change of Variables, Slicing Theorem, etc) it is not difficult to check that these classes satisfy properties (MS4)-(MS7). Property (MS1) is part of Morrey's inequality, and property (MS2) is the easy part of Step 2 in the proof of \cite[Theorem 5]{Figalli} (to check this, one may combine Taylor's theorem, Morrey's inequality, and Young's inequality to see that
$$
|f(y)-f(x)|^{m}\leq C\int_{ B(x,r)}(1+|D^{k}f(z)|^p)dz,
$$
for $x\in A$ and $|y-x|\leq r\leq 1$, and then conclude by a covering argument, see \cite{Figalli} for details). Finally, property (MS3)
can be easily checked as follows: consider the map $\R^n \ni x\mapsto g(x)=D^{k-1}f(x)\in\R^{M}$, where $M$ is the dimension of the space $\mathcal{L}_{s}^{k-1}(\R^n, \R^m)$ of $(k-1)$-linear symmetric maps from $\R^n$ to $\R^m$ and we identify $\mathcal{L}_{s}^{k-1}(\R^n, \R^m)$ with $\R^{M}$. Using that $f\in W^{k,p}_{\textrm{loc}}(\R^n, \R^m)$, an easy calculation shows that the coordinate functions $g_j$ of $g=(g_1,..., g_M)$ have first order weak derivatives which are in $L^{p}$. That is, $g_{j}\in W^{1,p}_{\textrm{loc}}(\R^n)$ for every $j=1, ..., M$. By \cite[Theorem 6.2.1]{EvansGariepy}, $g_j$ is then differentiable almost everywhere in $\R^n$, for each $j=1, ..., M$. It follows that $g$ is differentiable almost everywhere in $\R^m$, which means that $f$ is $k$ times differentiable at a.e. $x\in\R^n$, and in particular $f$ has a Taylor expansion at a.e. $x\in\R^n$. Thus we may apply Theorem \ref{abstract theorem} to deduce that every $f\in W^{k,p}_{\textrm{loc}}(\R^n, \R^m)$ has the Morse-Sard property if $k=n-m+1\geq 2$ and $p>n$.

As for the case $n=m$, this is an easy consequence of the coarea formula for Sobolev mappings \cite{MalySwansonZiemer} (or an immediate consequence of the case $n=m$ of Theorem \ref{Stepanov meets Morse-Sard} above and the mentioned fact that functions of $W^{1,p}_{\textrm{loc}}(\R^n, \R^m)$ are differentiable almost everywhere when $p>n$).

\medskip

Let us now see why Theorems \ref{general theorem} and \ref{Stepanov meets Morse-Sard}, and Corollary \ref{main theorem} are not weaker than the recent Bourgain-Korobkov-Kristensen generalizations \cite{BourKoKris2} of the Morse-Sard theorem in the case of real-valued functions for the spaces $W^{n,1}(\R^n, \R)$ and $BV_{n, \textrm{loc}}(\R^n)$.
Since $W^{1,1}_{\textrm{loc}}(\R^n)\subset BV_{\textrm{loc}}(\R^n)$, the results of \cite{BourKoKris2} are stronger than all the previous generalizations of the Morse-Sard theorem for Sobolev spaces in the case $m=1$, and are also stronger than \cite{PavZaj} and \cite[Theorem 8]{Rifford} (even in the case $n=2$, because the results of \cite{BourKoKris2} do not require that the function be Lipschitz, and because every locally semiconcave function on $\R^2$ belongs to $BV_{2, \textrm{loc}}(\R^2)$). Thus, in order to make our point, it will be enough to exhibit examples of functions $f\in\mathcal{C}^1\mathcal{P}^2(\R^2, \R)$ such that $f\notin BV_{2, \textrm{loc}}(\R^2)$.

For a simple, explicitly defined example, let us consider $f:\R^2\to\R$,
    $$
    f(x,y)=
  \begin{cases}
    x^4\sin\left(\frac{1}{x^2}\right) & \text{ if } x\neq 0, \\
    0 & \text{ if } x=0.
  \end{cases}
    $$
We have $\frac{\partial f}{\partial y}=0$ everywhere, and
    $$
    \frac{\partial f}{\partial x}(x,y)=
  \begin{cases}
    4x^3\sin\left(\frac{1}{x^2}\right)-2x\cos\left(\frac{1}{x^2}\right) & \text{ if } x\neq 0, \\
    0 & \text{ if } x=0,
  \end{cases}
    $$
so that $f\in C^1(\R^2)$. On the other hand, it is clear that $f$ is $C^{\infty}$ on $\{ (x,y) : x\neq 0\}$, and for every $(x_0, y_0)$ with $x_0=0$ we have that $(0,0)\in J^{2}f(0,y_0)$, because
$$
\lim_{(x,y)\to (0,y_0)}\frac{x^4\sin\left(\frac{1}{x^2}\right)}{x^2+y^2}=0.
$$
Hence $f\in \mathcal{C}^1\mathcal{P}^2(\R^2, \R)$. However, $g:=\partial f/\partial x\notin BV(\R^2)$. Indeed, defining
$g_{y}(x)=\frac{\partial f}{\partial x}(x,y)$,
it is easy to see that
$$
V_{0}^{1}g_{y}=\infty
$$ for each $y\in\R$ (here we use the notation from \cite[5.11]{EvansGariepy}), and because $g_y$ is continuous this implies
$$
\textrm{ess}V_{0}^{1}g_{y}=\infty
$$ for every $y$, hence 
$$
\int_{-1}^{1}\textrm{ess}V_{0}^{1}g_{y}=\infty,
$$
and by \cite[Theorem 5.11.2]{EvansGariepy} this implies that $g\notin BV_{\textrm{loc}}(\R^2)$, hence $f\notin BV_{2, \textrm{loc}}(\R^2)$ either. 

If the reader wishes to look at more complex examples with sets of critical points of positive measure where the functions are not locally $BV_2$ (which prevents the application of all of the previously known results in order to obtain the Morse-Sard property), he or she might want to consider the following.

\begin{ex}
{\em Let $C\subset [0,1]$ be a Cantor-like set of positive measure.
Construct a continuous function $g:[0,1]\to \R$ as follows. Set $g(x)=0$ for every $x\in C$ and, for
each of the $2^{n-1}$ intervals $I_n^j$ of length $l_n$ that are removed from an interval $I^{k}_{n-1}$ at step $n$ in the construction of $C$,
consider a subinterval $J_n^j$ of length $\frac{l_n}{3}$ centered at the same point as $I_n^j$. Define $g$ on $I_n^j$ as a differentiable function which is not of bounded variation and such that $0\leq g(x)\leq l_n^{\frac{3}{2}}$ and $g(x)=0$ for every
$x\in I_n^j\setminus J_n^j$.
The function $F:(0,1)^2\to \R$ defined by $F(x,y)=f(x)+f(y)$ with $f(x)=\int_0^xg(t)dt$ satisfies $C\times C\subset C_F$ and is twice differentiable at every point, but it does not have a $BV$ derivative. However, $F$ satisfies the hypotheses of Theorem \ref{Stepanov meets Morse-Sard}, and consequently has the Morse Sard property. }
\end{ex}

On the other hand, it is clear that the results of \cite{BourKoKris2} and, in the case $n=2$, \cite{PavZaj} and \cite[Theorem 8]{Rifford} are not weaker than Theorem \ref{general theorem} either. For instance, it is easy to produce examples of delta-convex, and of locally semiconcave, functions $f:\R^2\to\R$ which are not of class $C^1$. 

However, if $f:\R^2\to\R$ is locally semiconcave, the parts of the proofs of Theorems \ref{abstract theorem} and \ref{general theorem} that are relevant to this situation can easily be adapted to show that $\mathcal{L}^{1}\left(f(\{x\in\R^2 : Df(x) \textrm{ exists and is zero } \})\right)$ $=0$, thus recovering \cite[Theorem 8]{Rifford} and the main result of \cite{Landis}, see also \cite{PavZaj}. In order to do so one only has to note that: 1) all d.c. convex functions are locally semiconcave; 2) all locally semiconcave functions have second order Taylor expansions at almost every point (thanks to Alexandroff's theorem); and 3) if $f$ is locally semiconcave then the proof of Lemma \ref{n null sets are mapped onto m null sets} can be easily adapted to show that $\mathcal{L}^{1}(f(N))=0$ for every subset $N$ of $\{x\in\R^2 : Df(x) \textrm{ exists and is zero} \}$ with $\mathcal{L}^{2}(N)=0$. As a matter of fact, one can also adapt the proofs of Theorems \ref{abstract theorem} and \ref{general theorem} to show that if $f:\R^n\to\R$ is of class $C^{n-2}$ and the directional derivatives of order $n-2$ of $f$ are locally semiconcave functions, with constants of local semiconcavity that are independent of the directions, then $f$ has the Morse-Sard property. We will not spell out the details because in view of \cite[Theorem 6.3.3]{EvansGariepy} this result is also an immediate consequence of the Bourgain-Korobkov-Kristensen theorem for $BV_{n}$ functions in \cite{BourKoKris2}.

Let us finish this section with some remarks about the statement of Theorem \ref{general theorem}. A natural question is whether condition $(2)$ of Theorem \ref{general theorem} could be replaced with a weaker condition in which the limit would hold for a.e. $x$ instead of all $x$. The answer is negative. Indeed, consider Whitney's example from \cite{Whitney}, where an arc $C$ in $\R^2$ and a function $f:\R^2\to\R$ are constructed in such a way that $f$ is critical on $C$ (meaning that $C\subseteq C_f$) and $f$ is not constant on $C$; in particular $f(C_f)$ contains an open interval, and $f$ does not have the Morse-Sard property. See also \cite{Norton2, Hajlasz2} and the references therein for more information about Whitney-type examples. Although not explicitly stated in Whitney's paper, these $f$ and $C$ satisfy two important additional properties:
\begin{enumerate}
\item[{(a)}] $f$ is of class $C^{\infty}$ on $\R^2\setminus C$, and
\item[{(b)}] $\mathcal{L}^{2}(C)=0$.
\end{enumerate}
Property (b) follows easily from the definition of $C$, while (a) is a consequence of the facts that $f$ is constructed by applying the Whitney Extension Theorem to a function defined on $C$, and that Whitney's theorem provides us with extensions which are always of class $C^\infty$ outside the closed set on which the functions to be extended are initially defined.
Then, since $f$ has derivatives of all orders which are locally Lipschitz on the open set $\R^2\setminus C$, it is clear that $\limsup_{h\to 0}\frac{|f(x+h)-f(x)-Df(x)(h)|}{|h|^2}<\infty$ for all $x\in \R^2\setminus C$, and in particular for a.e. $x\in\R^2$. On the other hand $f$ clearly satisfies conditions $(1)$ and $(3)$ of Theorem \ref{general theorem}. However, $\mathcal{L}^{1}(f(C_f))>0$.

Thus, refining the question about condition $(2)$ of Theorem \ref{general theorem}, one could ask: how small must a set $N\subset\R^n$ be in order that Theorem \ref{general theorem} still holds true if we replace condition $(2)$ with a new condition in which the limit holds for every $x\in\R^n\setminus N$? This question is of course much more difficult to answer. By combining Theorem \ref{abstract theorem} for $m=1$ with \cite[Theorem 2]{Norton} and the results of \cite{LiuTai}, we can nevertheless obtain a partial answer as follows.

\begin{theorem}\label{general theorem variant 1}
Let $n\geq 2$, $0<\alpha<1$, and let $f:\R^n\to\R$ be such that
\begin{enumerate}
\item $f\in C^{n-1, \alpha}(\R^n, \R)$;
\item $\limsup_{h\to 0}\frac{|f(x+h)-f(x)-Df(x)(h) - ... - \frac{1}{(n-1)!} D^{n-1}f(x)(h^{n-1})|}{|h|^n}<\infty$ for every $x\in\R^n\setminus N$, for some $N$ with $\mathcal{H}^{n-1+\alpha}(N)=0$.
\end{enumerate}
Then $\mathcal{L}^{1}\left( f(C_f) \right)=0$.
\end{theorem}
\noindent Here $C^{n-1, \alpha}$ denotes the subset of $C^{n-1}$ defined by all functions whose derivatives of order $n-1$ satisfy H\"older-continuity conditions of order $\alpha$ on compact sets.
\begin{proof} Let us define, for each $U$ open in $\R^n$ the class  $\mathcal{C}_{n,1}^{n}(U,\R)$ as the set of all functions $f:U\to \R$ such that:
\begin{enumerate}
\item $f\in C^{n-1, \alpha}(U, \R)$;
\item $\limsup_{h\to 0}\frac{|f(x+h)-f(x)-Df(x)(h) - ... - \frac{1}{(n-1)!} D^{n-1}f(x)(h^{n-1})|}{|h|^n}<\infty$ for every $x\in U\setminus N$, for some $N\subset\R^n$ with $\mathcal{H}^{n-1+\alpha}(N)=0$; 
\item $f$ has a Taylor expansion of order $n$ at almost every $x\in U$.
\end{enumerate}
By the results of \cite{LiuTai} we have that every function $f$ satisfying $(2)$ also satisfies $(3)$.
It is clear that the classes $\mathcal{C}_{n,1}^{n}$ satisfy properties (MS1) and (MS3) of Theorem \ref{abstract theorem} for $m=1, j=n$.
As for property (MS2), we know by \cite[Theorem 2(ii)]{Norton} that every $f\in C^{j+\alpha}(\R^n, \R)$ maps $\mathcal{H}^{j+\alpha}$-null critical sets onto $\mathcal{L}^{1}$-null sets. Therefore, by Lemma \ref{n null sets are mapped onto m null sets}, (MS2) will be satisfied as long as we ask that $|f(x+h)-f(x)-Df(x)(h) - ... - \frac{1}{(n-1)!} D^{n-1}f(x)(h^{n-1})|=O(|h|^n)$ for every $x$ outside an $\mathcal{H}^{n-1+\alpha}$-null set.
\end{proof}
\begin{remark}
{\em The reason why we cannot similarly apply Theorem \ref{abstract theorem} in the case $m\geq 2$ is that we cannot check condition (MS6) due to the lack of a Fubini theorem for Hausdorff measures.}
\end{remark}

By using the same argument, and taking into account that $C^{n-1}\subset C^{n-2, \alpha}$, one may also prove the following. 
\begin{theorem}\label{general theorem variant 2}
Let $n\geq 2$, $0<\alpha<1$, and let $f:\R^n\to\R$ be such that
\begin{enumerate}
\item $f\in C^{n-1}(\R^n, \R)$;
\item $\limsup_{h\to 0}\frac{|f(x+h)-f(x)-Df(x)(h) - ... - \frac{1}{(n-1)!} D^{n-1}f(x)(h^{n-1})|}{|h|^n}<\infty$ for every $x\in\R^n\setminus N$, for some $N$ with $\mathcal{H}^{n-2+\alpha}(N)=0$.
\end{enumerate}
Then $\mathcal{L}^{1}\left( f(C_f) \right)=0$.
\end{theorem}

The reader is invited to consider other classes of functions (e.g. $C^{k+\beta+}$ in \cite{Norton}) and use Theorem \ref{abstract theorem} to formulate other variants of Theorem \ref{general theorem}.

\medskip

\section{Proofs of Theorems \ref{abstract D-S theorem}, \ref{general D-S theorem}, and \ref{HajlaszZimmerman}}

The proof of Theorem \ref{abstract D-S theorem} follows the same plan as that of Theorem \ref{abstract theorem}.
Given a function $f\in\mathcal{C}_{n,m}^{k}$, we define $C=\{x\in U \, : \, \textrm{ rank}(Df(x))<m\}$, and set
$$
A_j=\{x\in C \, : \, D^{i}f(x)=0 \textrm{ for } 1\leq i\leq j\}, \, \, \, 1\leq j\leq k-1,
$$
and
$$
K=\{ x\in C: 1\leq \textrm{rank}(Df(x))\leq m-1\},
$$
so that
\begin{equation}\label{decomposition Aj bis}
C=K\cup (A_1\setminus A_2)\cup (A_2\setminus A_3)\cup ...\cup (A_{k-2}\setminus A_{k-1})\cup A_{k-1}.
\end{equation}
Steps 1 and 3 of the proof of Theorem \ref{abstract theorem} can be rewritten, with obvious changes and no difficulty, to see that one can always assume $K=\emptyset$ and that, once one has checked that $\mathcal{H}^{\ell}(A_{k-1}\cap f^{-1}(y))=0$ for a.e. $y$, one can also use the Implicit Function Theorem and the Kneser-Glaeser Theorem in order to reduce the dimension from $n$ to $n-1$ and apply the induction hypothesis.

So, the only really different point of the proof is Step 2. Let us see that $\mathcal{H}^{\ell}(A_{k-1}\cap f^{-1}(y))=0$ for a.e. $y\in\R^m$.
We can distinguish two subsets of $A_{k-1}$, namely,
$$
C_{k-1}=\{x\in A_{k-1} \, : \,  (0,...,0)\in J^{k}f(x)\},
$$
and $$B_{k-1}=A_{k-1}\setminus C_{k-1}.$$ 
\begin{claim}\label{points with O jets are OK in DS theorem}
$\mathcal{H}^{\ell}(C_{k-1}\cap f^{-1}(y))=0$ for almost every $y\in\R^m$.
\end{claim}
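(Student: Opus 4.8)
The plan is to imitate the covering argument in the proof of Lemma~\ref{points with 0 jets are good}, but replacing the estimate for the Lebesgue measure of the image by an estimate, obtained via an \emph{upper integral} $\int^{*}$ over the target $\R^{m}$, for a Hausdorff premeasure of the fibres $f^{-1}(y)\cap C_{k-1}$. First I would reduce, by writing $C_{k-1}=\bigcup_{N}\bigl(C_{k-1}\cap[-N,N]^{n}\bigr)$ and taking a countable union at the end, to the case in which $C_{k-1}$ is contained in a fixed closed cube $B=[-N,N]^{n}\subseteq U$. Fix $\varepsilon\in(0,1]$. Since $\lim_{h\to0}|f(x+h)-f(x)|/|h|^{k}=0$ for every $x\in C_{k-1}$, the closed sets
\[
D_{i}=D_{i}^{\varepsilon}=\Bigl\{x\in B:\ |f(x+h)-f(x)|\le\varepsilon|h|^{k}\ \text{whenever}\ |h|\le\tfrac{\sqrt{n}}{i}\Bigr\}
\]
increase with $i$ and satisfy $C_{k-1}\cap B\subseteq\bigcup_{i}D_{i}$ (for $i$ large enough that $B+h\subseteq U$ when $|h|\le\sqrt{n}/i$).

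The key step is the following estimate. Fix $i$ and $r\le\min\{1,1/i\}$, and split $B$ into its $(2N/r)^{n}$ closed subcubes $Q_{\alpha}$ of side $r$. If $x,x'\in Q_{\alpha}\cap D_{i}$ then $|x-x'|\le\sqrt{n}\,r\le\sqrt{n}/i$, hence $|f(x)-f(x')|\le\varepsilon(\sqrt{n}\,r)^{k}$; thus the compact set $f(Q_{\alpha}\cap D_{i})$ has diameter at most $\varepsilon n^{k/2}r^{k}$ and $\mathcal{L}^{m}\bigl(f(Q_{\alpha}\cap D_{i})\bigr)\le c\,\varepsilon^{m}n^{km/2}r^{km}$. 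For a fixed $y$, the fibre $f^{-1}(y)\cap D_{i}$ is covered by those subcubes $Q_{\alpha}$ with $y\in f(Q_{\alpha}\cap D_{i})$, each of diameter $\sqrt{n}\,r$, so $\mathcal{H}^{\ell}_{\sqrt{n}\,r}\bigl(f^{-1}(y)\cap D_{i}\bigr)\le c\,\#\{\alpha:\ y\in f(Q_{\alpha}\cap D_{i})\}\,(\sqrt{n}\,r)^{\ell}$. Integrating in $y$ and using that $\int^{*}\#\{\alpha:\ y\in f(Q_{\alpha}\cap D_{i})\}\,dy=\sum_{\alpha}\mathcal{L}^{m}\bigl(f(Q_{\alpha}\cap D_{i})\bigr)$ (a finite sum of measurable indicators),
\[
\int^{*}_{\R^{m}}\mathcal{H}^{\ell}_{\sqrt{n}\,r}\bigl(f^{-1}(y)\cap D_{i}\bigr)\,dy\ \le\ c(N)\,\varepsilon^{m}\,r^{\,km+\ell-n}.
\]
Since $\ell=n-m-k+1$ one has $km+\ell-n=(k-1)(m-1)\ge0$ because $k\ge2$, so $r^{km+\ell-n}\le1$ and the bound is $\le c(N)\varepsilon$ for every admissible $r$.

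To conclude I would let $r=1/j\to0$: since $\mathcal{H}^{\ell}_{\sqrt{n}/j}(\cdot)\uparrow\mathcal{H}^{\ell}(\cdot)$ and the upper integral obeys monotone convergence, $\int^{*}_{\R^{m}}\mathcal{H}^{\ell}(f^{-1}(y)\cap D_{i})\,dy\le c(N)\varepsilon$. Letting $i\to\infty$ and using that $\mathcal{H}^{\ell}$ is Borel regular (so $\mathcal{H}^{\ell}(f^{-1}(y)\cap D_{i})\uparrow\mathcal{H}^{\ell}(f^{-1}(y)\cap\bigcup_{i}D_{i})$, cf.\ \cite[Theorem 1.1.2]{EvansGariepy}) together with monotone convergence again, and then using $C_{k-1}\cap B\subseteq\bigcup_{i}D_{i}$, gives $\int^{*}_{\R^{m}}\mathcal{H}^{\ell}\bigl(f^{-1}(y)\cap C_{k-1}\cap B\bigr)\,dy\le c(N)\varepsilon$. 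Letting $\varepsilon\to0$ yields $\int^{*}_{\R^{m}}\mathcal{H}^{\ell}(f^{-1}(y)\cap C_{k-1}\cap B)\,dy=0$, hence $\mathcal{H}^{\ell}(f^{-1}(y)\cap C_{k-1}\cap B)=0$ for a.e.\ $y$; exhausting $\R^{n}$ by the cubes $[-N,N]^{n}$ finishes the proof.

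The step I expect to be the main obstacle is getting the exponent right: the naive bound on $\mathcal{L}^{m}$ of the image (as in Lemma~\ref{points with 0 jets are good}) produces the exponent $km-n$, which is negative once $km<n$; the point is that integrating the Hausdorff \emph{premeasure} of the fibres turns this into $(k-1)(m-1)\ge0$. The remaining subtlety is purely measure-theoretic — the map $y\mapsto\mathcal{H}^{\ell}(f^{-1}(y)\cap D_{i})$ need not be Lebesgue measurable, which forces the use of upper integrals and of the monotone convergence theorem for them, together with Borel regularity of $\mathcal{H}^{\ell}$ to pass to the limit over the increasing sets $D_{i}$; and the borderline case $m=1$ (where the exponent $(k-1)(m-1)$ vanishes) is the one that genuinely needs the final limit $\varepsilon\to0$.
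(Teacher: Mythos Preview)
Your proposal is correct and follows essentially the same approach as the paper's proof: both combine the covering argument of Lemma~\ref{points with 0 jets are good} with the upper-integral trick (attributed to \cite{HajlaszZimmerman}) and the key exponent computation $km+\ell-n=(k-1)(m-1)\ge0$. The only organizational differences are that the paper ties the cube scale to the index $i$ and passes to the limit via a $\liminf$ and Fatou's lemma for the upper integral rather than monotone convergence, but these are cosmetic.
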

\begin{proof}
It suffices to prove the Claim in the case $\ell\geq 1$, as the case $\ell=0$ is just Theorem \ref{abstract theorem}. We may assume that $C_{k-1}\subset B:=[-R,R]^{n}$ for some fixed $R\in\N$. Let $\varepsilon >0$. For every $i\in \N$ we define $D_i$ as in the proof of Lemma \ref{points with 0 jets are good}, so we have $C_{k-1}\subset \bigcup _j D_j $ and $D_{i}\subset D_{i+1}$. For each $i$, we decompose $B$ as the union of a family of cubes $\{Q_{ij}\}_{j=1}^{N(i)}$ of diameter  $\sqrt{n}/i$ with pairwise disjoint interiors. 
If $x,y\in D_i\cap Q_{ij}$ equation \eqref{diameters well bounded} in the proof of Lemma \ref{points with 0 jets are good} shows that 
\begin{equation}\label{estimation of Hausdorf measure 1}
\mathcal{H}^m(f(D_i\cap Q_{ij}))\leq C(m,n)\textrm{diam}(f(D_i\cap Q_{ij}))^{m} \leq \varepsilon^{m} C(m,n)\textrm{diam}(Q_{ij})^{km},
\end{equation}
where $C(m,n)$ denotes a constant only depending on $m, n$. For all $s\geq i$, the sets $\{D_{s}\cap Q_{sj}\cap f^{-1}(y)\}_{j}$ form a covering of $D_{i}\cap f^{-1}(y)$ by sets of diameter less than or equal to $\sqrt{n}/s$, and $\textrm{diam}(D_{s}\cap Q_{sj}\cap f^{-1}(y))\leq \textrm{diam}(Q_{sj})\chi_{f(D_{s}\cap Q_{sj})}(y)$. So we have, for all $\alpha\geq s\geq i$, 
\begin{eqnarray*}
& &\mathcal{H}^{\ell}_{\sqrt{n}/s}(D_{i}\cap f^{-1}(y))\leq
C(\ell) \sum_{j}\textrm{diam}(D_{\alpha}\cap Q_{\alpha j}\cap f^{-1}(y))^{\ell}\\
& &\leq C(\ell) \sum_{j}\textrm{diam}(Q_{\alpha j})^{\ell}\chi_{f(D_{\alpha}\cap Q_{\alpha j})}(y),
\end{eqnarray*}
hence
$$
\mathcal{H}^{\ell}_{\sqrt{n}/s}(D_{i}\cap f^{-1}(y))\leq 
\liminf_{\alpha\to\infty} C(\ell) \sum_{j}\textrm{diam}(Q_{\alpha j})^{\ell}\chi_{f(D_{\alpha}\cap Q_{\alpha j})}(y),
$$ 
and by letting $s\to\infty$ we get
$$
\mathcal{H}^{\ell}(D_{i}\cap f^{-1}(y))\leq 
\liminf_{\alpha\to\infty} C(\ell) \sum_{j}\textrm{diam}(Q_{\alpha j})^{\ell}\chi_{f(D_{\alpha}\cap Q_{\alpha j})}(y),
$$
and consequently
\begin{eqnarray*}
& &\mathcal{H}^{\ell}(C_{k-1}\cap f^{-1}(y))\leq \sup_{i}\mathcal{H}^{\ell}(D_{i}\cap f^{-1}(y))\\
& &\leq
\liminf_{\alpha\to\infty} C(\ell) \sum_{j}\textrm{diam}(Q_{\alpha j})^{\ell}\chi_{f(D_{\alpha}\cap Q_{\alpha j})}(y).
\end{eqnarray*}
We now use a variation of an idea in \cite[Claim 3.1]{HajlaszZimmerman}: by taking the upper integral with respect to $d\mathcal{H}^{m}(y)$ on both sides of the above inequality (see \cite{EvansGariepy} for the definition of the upper integral), using Fatou's Lemma, plugging \eqref{estimation of Hausdorf measure 1}, and observing that $\ell+km=n+(m-1)(k-1)\geq n$, we obtain
\begin{eqnarray*}
& &\int^{*}_{\R^m}\mathcal{H}^{\ell}(C_{k-1}\cap f^{-1}(y))d\mathcal{H}^{m}(y) \\
& &\leq C(m,n) \liminf_{i\to\infty}\sum_{j}\textrm{diam}(Q_{ij})^{\ell}\mathcal{H}^{m}(f(D_{i}\cap Q_{ij}))\\
& &\leq C(\ell, m,n)\liminf_{i\to\infty} \sum_{j}\textrm{diam}(Q_{ij})^{\ell} \varepsilon^{m}\textrm{diam}(Q_{i,j})^{km}\\
& &\leq  
C(\ell, m,n)\liminf_{i\to\infty} \sum_{j} \varepsilon^{m}\textrm{diam}(Q_{i,j})^{n}\leq C(\ell, m)(2R)^{n}\varepsilon^{m}.
\end{eqnarray*}
By sending $\varepsilon$ to $0$ we thus have
$$
\int^{*}_{\R^m}\mathcal{H}^{\ell}(C_{k-1}\cap f^{-1}(y))d\mathcal{H}^{m}(y)=0.
$$
This means that there exists a sequence $\{\varphi_j\}$ of simple $\mathcal{H}^{m}$-measurable functions such that  $\varphi_j(y)\geq \mathcal{H}^{\ell}(C_{k-1}\cap f^{-1}(y))$ and $\int_{\R^m}\varphi_j (y) d\mathcal{H}^{m}(y)\to 0$. So $\varphi_{j}\to 0$ in $L^{1}(\R^m, \mathcal{H}^{m})=L^{1}(\R^m)$, hence there is a subsequence $\varphi_{j_i}$ such that $\lim_{i\to\infty}\varphi_{j_i}(y)=0$ for $\mathcal{L}^{m}$-a.e. $y$, which implies $\mathcal{H}^{\ell}(C_{k-1}\cap f^{-1}(y))=0$ for $\mathcal{L}^{m}$-a.e. $y\in\R^m$.
\end{proof}

Now, notice that the proof of Lemma \ref{Bn-1 is null} also shows (just by replacing $n-m$ with $k-1$) that $\mathcal{L}^{n}(B_{k-1})=0$ in the current setting. Therefore, by using condition (DS2) we also get 
$\mathcal{H}^{\ell}(B_{k-1}\cap f^{-1}(y))=0$ for a.e. $y\in\R^m$, which, together with Claim \ref{points with O jets are OK in DS theorem} yields $\mathcal{H}^{\ell}(A_{k-1}\cap f^{-1}(y))=0$ for a.e. $y\in\R^m$. The proof of Theorem \ref{abstract D-S theorem} is complete.

\medskip

Next, in order to deduce Theorem \ref{general D-S theorem} from Theorem \ref{abstract D-S theorem}, we define, for each $U$ open in $\R^i$ and $V$ open in $\R^j$ with $i\geq j$, and $1\leq s\leq n$, the class  $\mathcal{C}_{i,j}^{s}(U,V)$ as the set of all functions $f:U\to V$ such that:
\begin{enumerate}
\item $f\in C^{s-1}(\R^i, \R^j)$;
\item $\limsup_{h\to 0}\frac{|f(x+h)-f(x)-Df(x)(h) - ... - \frac{1}{(s-1)!} D^{s-1}f(x)(h^{s-1})|}{|h|^s}<\infty$ for every $x\in\R^n$; 
\item $f$ has a Taylor expansion of order $s$ at almost every $x\in\R^n$.
\end{enumerate}
Again, by the results of \cite{LiuTai} every function that satisfies $(2)$ also satisfies $(3)$.
By adding the following Lemma to Lemmas \ref{n null sets are mapped onto m null sets}, \ref{inverses are OK1}, \ref{inverses are OK2}, \ref{composition is OK1}, \ref{composition is OK2}, \ref{expansions in ae point imply expansions in ae in ae subspace}, and using the fact that diffeomorphisms map null sets onto null sets it is easy to check that the classes $\mathcal{C}_{i,j}^{s}$ satisfy properties (DS1)--(DS7) of Theorem \ref{abstract D-S theorem}; thus Theorem \ref{general D-S theorem} follows from Theorem \ref{abstract D-S theorem} (note that the case $n=m$ corresponds to $k=1$ and $\ell=0$, a situation which is already covered by Theorem \ref{general theorem}). 
\begin{lemma}\label{what happens with null sets in the D-S result}
Let $f:\R^n \to\R^m$ be a function, $1\leq k\leq n-m+1$, and define $\ell=n-m-k+1$. Then, for every subset $N$ of  $\{x : \limsup_{h\to 0}\frac{|f(x+h)-f(x)|}{|h|^k}<\infty\}$ such that $\mathcal{L}^n(N)=0$, we have that $\mathcal{H}^{\ell}(N\cap f^{-1}(y))=0$ for almost every $y\in\R^m$.
\end{lemma}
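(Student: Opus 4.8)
The plan is to fuse the elementary covering estimate of Lemma~\ref{n null sets are mapped onto m null sets} with the upper-integral slicing device from the proof of Claim~\ref{points with O jets are OK in DS theorem}. First I would dispose of the case $\ell=0$: there $\mathcal{H}^{0}(N\cap f^{-1}(y))=0$ for a.e.\ $y$ is just the statement $\mathcal{L}^{m}(f(N))=0$, which is Lemma~\ref{n null sets are mapped onto m null sets}; so assume $\ell\geq 1$. By the usual truncation we may assume $N$ is bounded, $N\subseteq B:=[-R,R]^{n}$, and writing $N\subseteq\bigcup_{j}A_{j}$ with $A_{j}=\{x:\limsup_{h\to 0}|f(x+h)-f(x)|/|h|^{k}\leq j\}$ it suffices to treat each $N\cap A_{j}$; fix $j$.

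Fix $\varepsilon>0$. Since $\mathcal{L}^{n}(N)=0$, for each $i\in\N$ choose a countable family of cubes $\{C_{i\alpha}\}_{\alpha}$ covering $N$ with $\textrm{diam}(C_{i\alpha})\leq 1/i$ and $\sum_{\alpha}\textrm{vol}(C_{i\alpha})<\varepsilon$, and set
$$
D_{i}=\{x\in N\cap A_{j}:\ |f(x+h)-f(x)|\leq (j+1)|h|^{k} \ \textrm{ whenever } \ |h|<1/i\}.
$$
Then $D_{i}\subseteq D_{i+1}$ and $\bigcup_{i}D_{i}=N\cap A_{j}$; and if $x,y\in D_{i}\cap C_{i\alpha}$ then $|f(x)-f(y)|\leq (j+1)\textrm{diam}(C_{i\alpha})^{k}$, so
$$
\mathcal{H}^{m}\big(f(D_{i}\cap C_{i\alpha})\big)\leq C(m)\,\textrm{diam}\big(f(D_{i}\cap C_{i\alpha})\big)^{m}\leq C(m)(j+1)^{m}\textrm{diam}(C_{i\alpha})^{km}.
$$
This is the analogue, with $N$ in place of $C_{k-1}$, of inequality~\eqref{estimation of Hausdorf measure 1}: the factor $(j+1)^{m}$ replaces $\varepsilon^{m}$, the smallness now coming not from a vanishing $k$-th jet but from the small total volume of the covering.

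Next I would run the slicing argument in the proof of Claim~\ref{points with O jets are OK in DS theorem} verbatim, with the $C_{i\alpha}$ in the role of the cubes $Q_{ij}$ there: using that $D_{i}\cap f^{-1}(y)$ is covered, for every $s\geq i$, by the sets $D_{s}\cap C_{s\alpha}\cap f^{-1}(y)$ of diameter $\leq 1/s$, with $\textrm{diam}(D_{s}\cap C_{s\alpha}\cap f^{-1}(y))\leq \textrm{diam}(C_{s\alpha})\chi_{f(D_{s}\cap C_{s\alpha})}(y)$, and passing to the $\liminf$ over the covering index, then $s\to\infty$, and invoking $\bigcup_{i}D_{i}=N\cap A_{j}$ together with the continuity of $\mathcal{H}^{\ell}$ along increasing unions, one obtains
$$
\mathcal{H}^{\ell}\big((N\cap A_{j})\cap f^{-1}(y)\big)\leq \liminf_{i\to\infty}C(\ell)\sum_{\alpha}\textrm{diam}(C_{i\alpha})^{\ell}\chi_{f(D_{i}\cap C_{i\alpha})}(y).
$$
Taking the upper integral $\int^{*}_{\R^{m}}(\cdot)\,d\mathcal{H}^{m}(y)$, applying Fatou's lemma and the previous estimate, and observing that $\ell+km=n+(m-1)(k-1)\geq n$, whence $\textrm{diam}(C_{i\alpha})^{\ell+km}\leq n^{n/2}\textrm{vol}(C_{i\alpha})$ for $i\geq 1$, gives
$$
\int^{*}_{\R^{m}}\mathcal{H}^{\ell}\big((N\cap A_{j})\cap f^{-1}(y)\big)\,d\mathcal{H}^{m}(y)\leq C(\ell,m,n)(j+1)^{m}\liminf_{i\to\infty}\sum_{\alpha}\textrm{diam}(C_{i\alpha})^{\ell+km}\leq C(\ell,m,n)(j+1)^{m}\varepsilon.
$$
Letting $\varepsilon\to 0$ forces this upper integral to vanish; then, exactly as at the end of the proof of Claim~\ref{points with O jets are OK in DS theorem} (dominate from above by simple $\mathcal{H}^{m}$-measurable functions converging to $0$ in $L^{1}$ and extract an a.e.\ convergent subsequence), $\mathcal{H}^{\ell}((N\cap A_{j})\cap f^{-1}(y))=0$ for $\mathcal{L}^{m}$-a.e.\ $y$, and a countable union over $j$ finishes the proof.

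There is no genuinely new obstacle here --- the argument is an assembly of two estimates already in the paper --- but two points need care: (i) the smallness exploited in the upper-integral bound is $\sum_{\alpha}\textrm{vol}(C_{i\alpha})<\varepsilon$, coming from $\mathcal{L}^{n}(N)=0$, rather than a power of $\varepsilon$ from vanishing jets; and (ii) one must restrict to $\ell\geq 1$ (the case $\ell=0$ being Lemma~\ref{n null sets are mapped onto m null sets}) so that the $\mathcal{H}^{m}$-upper-integral and the $L^{1}$-subsequence step apply cleanly.
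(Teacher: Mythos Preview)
Your proposal is correct and follows essentially the same approach as the paper's own proof: reduce to $\ell\geq 1$, decompose into the sets $A_j$ and $D_i$, combine the diameter estimate $|f(x)-f(y)|\leq (j+1)\textrm{diam}(C_{i\alpha})^{k}$ with the upper-integral slicing device of Claim~\ref{points with O jets are OK in DS theorem}, and exploit the inequality $\ell+km\geq n$ together with $\sum_{\alpha}\textrm{diam}(C_{i\alpha})^{n}\lesssim\varepsilon$. The only cosmetic differences are your use of $\textrm{vol}(C_{i\alpha})$ rather than $\textrm{diam}(C_{i\alpha})^{n}$ (equivalent up to the factor $n^{n/2}$ you note) and your explicit mention of continuity of $\mathcal{H}^{\ell}$ along increasing unions; your final bound $C(\ell,m,n)(j+1)^{m}\varepsilon$ is in fact the correct one.
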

\begin{proof}
In the case $\ell=0$, which corresponds to $k=n-m+1$, this Lemma tells us the same thing as Lemma \ref{n null sets are mapped onto m null sets}. Therefore we may assume $\ell\geq 1$.
For each $j\in\N$ let us define $A_j$ as in the proof of Lemma \ref{n null sets are mapped onto m null sets}. It is enough to see that $\mathcal{H}^{\ell}(A_{j}\cap N\cap f^{-1}(y))=0$ for every $j\in\N$. So fix $j\in\N$ and $\varepsilon>0$, and for each $\alpha\in\N$ choose a sequence of cubes $\{Q_{\alpha\beta}\}_{\beta}$ such that $\textrm{diam}(Q_{\alpha\beta})\leq 1/\alpha$, $N\subset \bigcup_{\beta=1}^{\infty}Q_{\alpha\beta}$, and $\sum_{\beta=1}^{\infty}\textrm{diam}(Q_{\alpha\beta})^{n}\leq\varepsilon$. Define now $D_i$ as in the proof of Lemma \ref{n null sets are mapped onto m null sets}, so that $D_{i}\subset D_{i+1}$ and $A_{j}\cap N=\bigcup_{i=1}^{\infty}D_i$. If $x,y\in D_{i}\cap Q_{i\beta}$ we have, as in the proof of Lemma \ref{n null sets are mapped onto m null sets}, that
$$
|f(x)-f(y)|\leq (j+1)|x-y|^{k},
$$
which implies that
$$
\mathcal{H}^{m}(f(D_{i}\cap Q_{i\beta}))\leq C(m)(j+1)^{m}\textrm{diam}(Q_{i\beta})^{km}.
$$
Now, as in the proof of Lemma \ref{points with O jets are OK in DS theorem}, for every $\alpha\geq i$ the sets $\{D_{\alpha}\cap Q_{\alpha\beta}\cap f^{-1}(y)\}_{\beta}$ form a covering of $A_{j}\cap N\cap D_{i}\cap f^{-1}(y)$ by sets of diameter less than or equal to $\sqrt{n}/\alpha$, and $\textrm{diam}(D_{\alpha}\cap Q_{\alpha\beta}\cap f^{-1}(y))\leq \textrm{diam}(Q_{\alpha\beta})\chi_{f(D_{\alpha}\cap Q_{\alpha\beta})}(y)$ so,
by the same argument as in the proof of Lemma \ref{points with O jets are OK in DS theorem}, we have
$$
\mathcal{H}^{\ell}(A_{j}\cap N\cap f^{-1}(y))\leq 
\liminf_{\alpha\to\infty} C(\ell) \sum_{\beta}\textrm{diam}(Q_{\alpha \beta})^{\ell}\chi_{f(D_{\alpha}\cap Q_{\alpha \beta})}(y).
$$
Then, similarly to the proof of Lemma \ref{points with O jets are OK in DS theorem}, we deduce that
\begin{eqnarray*}
& &\int^{*}_{\R^m}\mathcal{H}^{\ell}(A_{j}\cap N\cap f^{-1}(y))d\mathcal{H}^{m}(y) \\
& &\leq C(m,n) \liminf_{\alpha\to\infty}\sum_{\beta}\textrm{diam}(Q_{\alpha\beta})^{\ell}\mathcal{H}^{m}(f(D_{\alpha}\cap Q_{\alpha\beta}))\\
& &\leq C(\ell, m,n)\liminf_{\alpha\to\infty} \sum_{\beta}\textrm{diam}(Q_{\alpha\beta})^{\ell} (j+1)^{m}\textrm{diam}(Q_{\alpha\beta})^{km}\\
& &\leq  
C(\ell, m,n)\liminf_{\alpha\to\infty} \sum_{j} (j+1)^{m}\textrm{diam}(Q_{\alpha\beta})^{n}\leq C(\ell, m,n) (j+1)^{m} \varepsilon^{m}.
\end{eqnarray*}
By letting $\varepsilon$ go to $0$ we thus have
$
\int^{*}_{\R^m}\mathcal{H}^{\ell}(A_{j}\cap N\cap f^{-1}(y))d\mathcal{H}^{m}(y)=0,
$
and consequently that $\mathcal{H}^{\ell}(A_{j}\cap N\cap f^{-1}(y))=0$ for a.e. $y\in\R^m$.
\end{proof}

Finally, let us say a few words about how one can deduce Theorem \ref{HajlaszZimmerman} from Theorem \ref{abstract D-S theorem}.  One can define, for each $U$ open in $\R^i$ and $V$ open in $\R^j$, with $i\geq j$, $s\geq 1$, the classes $\mathcal{C}_{i,j}^{s}(U,V):=W^{s,p}_{\textrm{loc}}(U,V)$. By using standard results and techniques of Sobolev space theory it is not difficult to check that these classes satisfy properties (DS4)-(DS7). Property (DS1) follows from Morrey's inequality, and property (DS3) has already been checked above (in the proof of Theorem \ref{De Pascale} that we included at the beginning of Section 5). The only delicate point is thus checking property (DS2). This is implicitly shown, by combining Morrey's and H\"older's inequalities with a clever use of the upper integral, in \cite[Claim 3.1, Step 1]{HajlaszZimmerman}, to where we refer the reader for details.

\medskip

\noindent {\bf Acknowledgement:} This work was finished while D. Azagra was at Equipe d'Analyse Fonctionelle de l'Institut de Math\'ematiques de Jussieu, Paris. D. Azagra wishes to thank l'Equipe, and very specially Gilles Godefroy and Yves Raynaud for their kind hospitality.

\end{document}